\newcommand{\nequation}{\setcounter{equation}{0}}
\renewcommand{\theequation}{\mbox{\arabic{section}.\arabic{equation}}}
\newcommand{\R}{{\Bbb R}}
\newcommand{\C}{{\Bbb C}}
\newcommand{\proofbegin}{\noindent{\it Proof.\quad}}
\newcommand{\proofend}{\hfill$\Box$\bigskip}
\newcommand{\proofendcontinue}{\hfill \raisebox{.8mm}[0cm][0cm]{$\bigtriangledown$}\bigskip}
\newcommand{\re}{\text{\upshape Re\,}}
\newcommand{\im}{\text{\upshape Im\,}}
\newcommand{\ntlim}{\lim^\angle}
\def\Xint#1{\mathchoice
{\XXint\displaystyle\textstyle{#1}}%
{\XXint\textstyle\scriptstyle{#1}}%
{\XXint\scriptstyle\scriptscriptstyle{#1}}%
{\XXint\scriptscriptstyle\scriptscriptstyle{#1}}%
\!\int}
\def\XXint#1#2#3{{\setbox0=\hbox{$#1{#2#3}{\int}$}
\vcenter{\hbox{$#2#3$}}\kern-.5\wd0}}
\def\dashint{\Xint-}
\newtheorem{theorem}{Theorem}[section]
\newtheorem{claim}{Claim}
\newtheorem{corollary}[theorem]{Corollary}
\newtheorem{lemma}[theorem]{Lemma}
\newtheorem{remark}[theorem]{Remark}
\newtheorem{figuretext}{Figure}
\title[The nonlinear steepest descent method]
{The nonlinear steepest descent method: \\ Asymptotics for initial-boundary value problems}
\author{Jonatan Lenells}
\address{Department of Mathematics, KTH Royal Institute of Technology, \\ 100 44 Stockholm, Sweden.}
\email{jlenells@kth.se}
\begin{document}
\begin{abstract} 
\noindent
We consider the rigorous derivation of asymptotic formulas for initial-boundary value problems using the nonlinear steepest descent method. We give detailed derivations of the asymptotics in the similarity and self-similar sectors for the mKdV equation in the quarter plane. 
Precise and uniform error estimates are presented in detail.  
\end{abstract}

\maketitle

\noindent
{\small{\sc AMS Subject Classification (2010)}: 41A60, 35Q15, 35Q53.}

\noindent
{\small{\sc Keywords}: Nonlinear steepest descent, initial-boundary value problem, Riemann-Hilbert problem, asymptotic analysis, long time asymptotics.}

\setcounter{tocdepth}{1}
\tableofcontents

\section{Introduction}\nequation
The nonlinear steepest descent method is an approach for determining the asymptotic behavior of solutions of matrix Riemann-Hilbert (RH) problems that depend on a large parameter. By deforming the contour in such a way that the jump is small everywhere except near a finite number of critical points, detailed asymptotic formulas can be obtained by summing up the contributions from the individual critical points. As the inverse scattering transform formalism expresses the solution of a nonlinear integrable PDE in terms of the solution of a RH problem, the nonlinear steepest descent method is particularly well-suited for determining asymptotic formulas for integrable equations. In fact, the method was first developed by Deift and Zhou in \cite{DZ1993}, where the long time behavior of the solution of the modified Korteweg-de Vries (mKdV) equation on the line was established. 
Given initial data $u_0(x)$, $x \in \R$, in the Schwartz class, they showed that the asymptotics of the associated solution $u(x,t)$ can be determined by performing a triangular factorization of the jump matrix followed by a contour deformation. The latter step could be completed thanks to the introduction of an ingenious analytic approximation of the jump matrix.

In this paper, we consider the derivation of asymptotic formulas for initial-{\it{}boundary} value (IBV) problems. Our goal is to provide a rigorous and yet accessible treatment. We prove two nonlinear steepest descent theorems suitable for finding the asymptotics in the similarity and the self-similar sectors respectively. We then apply these theorems to derive uniformly valid asymptotic formulas.
For definiteness, we consider the mKdV equation
\begin{align}\label{mkdv}
u_t + 6 u^2u_x - u_{xxx} = 0, 
\end{align}
posed in the quarter plane $\{x \geq 0, t \geq 0\}$. 

We first consider the asymptotic behavior in the similarity sector defined by
\begin{align}\label{similaritysector}
t > 1, \qquad x^3/t \to \infty, \qquad x \leq N t, \qquad \text{$N$ constant}.
\end{align}
The condition $x^3/t \to \infty$ defines the left boundary of the sector in the $(x,t)$-plane while the inequality $x \leq Nt$ defines its right boundary.
The solution $u(x,t)$ can be expressed \cite{BFS2004} in terms of the solution $M(x,t,k)$ of a RH problem whose jump matrix involves two spectral functions $r(k)$  and $h(k)$ which are defined in terms of the initial data $u_0(x) = u(x,0)$, $x \geq 0$, and of the boundary values
\begin{align}\label{boundaryvalues}
g_0(t) = u(0,t), \quad g_1(t) = u_x(0,t), \quad g_2(t) = u_{xx}(0,t), \qquad t \geq 0.
\end{align}
In addition to a jump across the real axis, which is present also in the case of the initial value problem, the RH problem for the IBV problem also has jumps across the two lines $\R e^{\frac{i\pi}{3}}$ and $\R e^{\frac{2i\pi}{3}}$, see Figure \ref{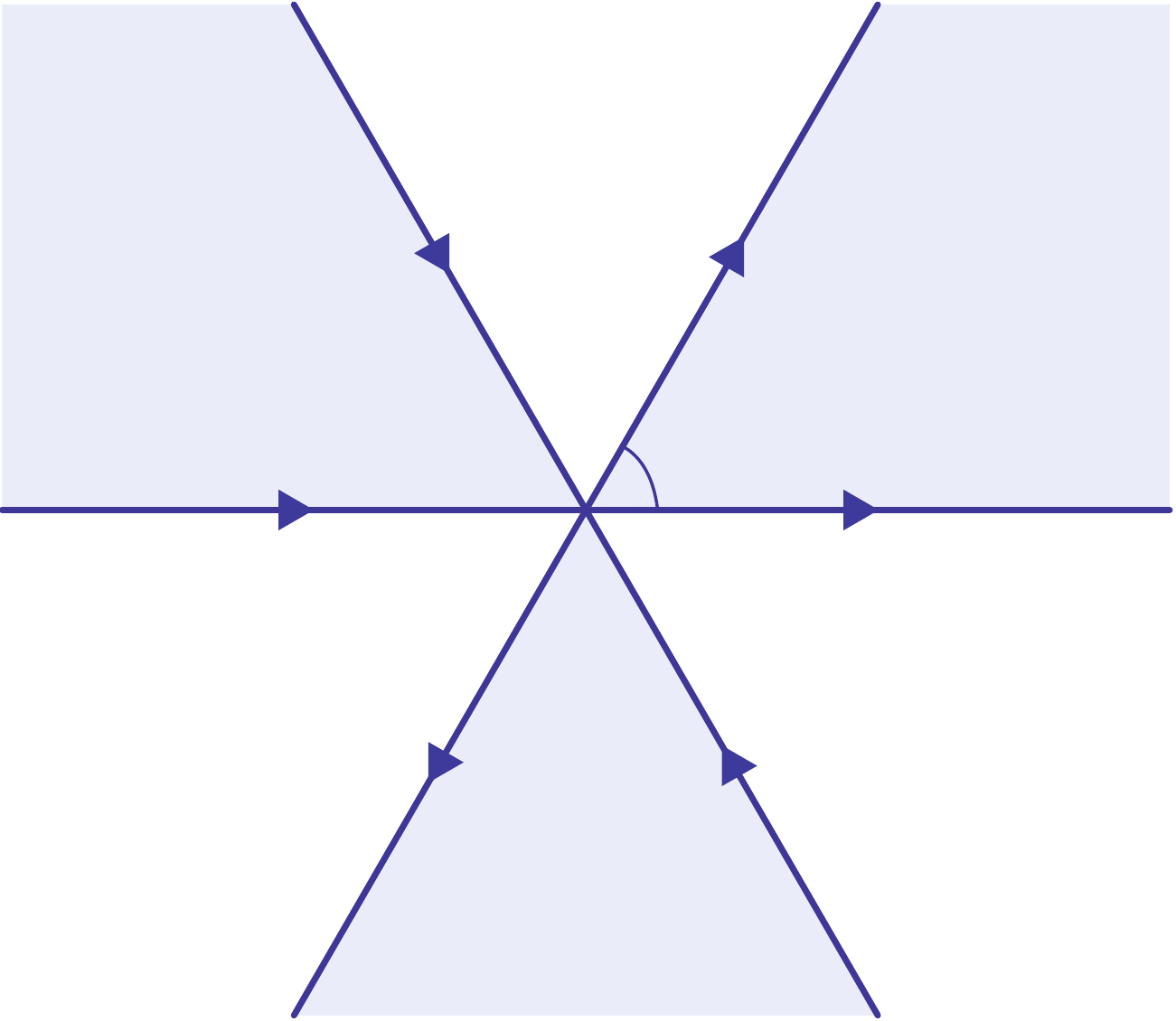}. The jumps across these two lines involve the spectral function $h(k)$, whereas the jump across $\R$ involves the spectral function $r(k)$. There are two critical points located at $\pm k_0$ where $k_0=\sqrt{\frac{x}{12 t}}$.
At a formal level, it is relatively straightforward to obtain the leading order asymptotics of $u(x,t)$ by reading off the contributions from the two critical points. However, the rigorous derivation of a uniform expansion with precise error terms is more involved. We establish such an expansion (see Theorem \ref{mainth1}) by implementing the following seven steps: 
\begin{enumerate}[1.]\setlength\itemindent{25pt}
\item Find an analytic approximation $h_a(t,k)$  of $h(k)$. 
\item Deform the contour to eliminate the part of the jump that involves $h_a(t,k)$. 
\item Conjugate the RH problem to arrive at an appropriate triangular factorization.
\item Find an analytic approximation of the jump matrix along $\R$.
\item Deform the contour so that the jump is small everywhere except near $\pm k_0$. 
\item Apply a nonlinear steepest descent theorem to find the asymptotics of $M(x,t,k)$. 
\item Find the asymptotics of $u(x,t)$.
\end{enumerate}
Steps 3-7 have analogs in the case of the initial value problem, whereas steps 1 and 2 are unique to the half-line problem. 

We also consider the asymptotic behavior of $u(x,t)$ in the self-similar sector $\mathbb{S}$ defined by
\begin{align}\label{sectorIVdef}
\mathbb{S} = \{(x,t) \, | \, t>1 \; \text{and} \, 0 < x < N t^{1/3}\}, \qquad \text{$N$ constant}.
\end{align}
In this case, the two critical points $\pm k_0$ merge at the origin as $t \to \infty$. Through a series of seven steps, conceptually similar to the seven steps listed above, we derive (see Theorem \ref{mainth2}) an asymptotic formula for the solution. 

Our derivations require that $r(k)$ and $h(k)$ possess a certain amount of regularity. Since regularity of the spectral functions corresponds to decay of the initial and boundary values, this means that the asymptotic formulas only are valid provided that the boundary values in (\ref{boundaryvalues}) have some decay as $t \to \infty$ (see Theorem \ref{existenceth} and Remark \ref{decayremark} for details).
We will not consider the interesting, but difficult, question of deriving asymptotic formulas for problems whose boundary values do not vanish for large $t$.

Other works studying asymptotics in the context of IBV problems for nonlinear integrable PDEs include \cite{BFS2004, BS2009, FI1994, FI1996, FIS2005}. It was shown in \cite{BFS2004} that the solution of the mKdV equation (\ref{mkdv}) on the half-line is $O(t^{-1/2})$ in the asymptotic sector $0 < c_1 < x/t < c_2$. Our asymptotic formula in the similarity region is stronger than this result, because it provides an exact expression for the coefficient of $t^{-1/2}$ in the asymptotic expansion.  In  \cite{FI1996, FIS2005} the solution of the defocusing nonlinear Schr\"odinger (NLS) equation on the half-line was shown to be $O(t^{-1/2})$ for $0 < c_1 < x/t < c_2$, and, for the focusing NLS, formulas for the asymptotic solitons were presented. Similar results were obtained for the KdV equation on the half-line in \cite{FI1994}. The asymptotic behavior of the solution of the Camassa-Holm equation on the half-line was analyzed in \cite{BS2009}.

\subsection{Main results}
The paper presents four main theorems of two different types:

\begin{itemize}
\item[$(i)$] Theorem \ref{steepestdescentth} and Theorem \ref{steepestdescentthIV} are {\it nonlinear steepest descent theorems}. They are of a more general character and are suitable for determining asymptotics of two different classes of RH problems depending on a large parameter.
Although they are formulated with the quarter plane problem for the defocusing mKdV equation (\ref{mkdv}) in mind, they could also be relevant for other initial-boundary value problems (perhaps after appropriate adjustments for different symmetries or dispersion relations). Indeed, the goal has been to formulate these theorems assuming only those properties of the RH problem associated with (\ref{mkdv}) which are relevant for the application of the nonlinear steepest descent method.

\item[$(ii)$] Theorem \ref{mainth1} and Theorem \ref{mainth2} are {\it asymptotic theorems specific to the quarter plane problem for equation (\ref{mkdv})}. Theorem \ref{mainth1} establishes the asymptotics of the solution in the similarity sector and is an application of Theorem \ref{steepestdescentth} to the RH problem associated with (\ref{mkdv}). Similarly, Theorem \ref{mainth2} (see also Corollary \ref{selfsimilarcorollary})  establishes the asymptotics of the solution in the self-similar sector and is an application of Theorem \ref{steepestdescentthIV} to the same RH problem. 
\end{itemize}

We hope that this division of the derivation into two steps will make it easier when applying the approach to other initial-boundary value problems in the future. We also hope that it clarifies the logical structure of the proofs by isolating those properties 
of the RH problem that are essential for the nonlinear steepest descent arguments.

\subsection{Effect of the boundary}
The asymptotic formulas for the half-line problem obtained in Theorems  \ref{mainth1} and \ref{mainth2} are similar to the analogous formulas for the pure initial value problem. In fact, in the similarity sector, the asymptotic formulas on the line and on the half-line have the exact same functional form. The only difference is that for the half-line problem, the definition of the spectral function $r(k)$, which enters the asymptotic formulas, in addition to the initial data, also involves the boundary values  (see Remark \ref{boundaryremark}). In other words, the only effect of the boundary is to modify $r(k)$. Intuitively, we can understand this as follows: Since $x$ grows faster than $t^{1/3}$ in the similarity region, the distance to the boundary eventually gets so big that what happens at the boundary has a small effect on the solution (recall that we assume that the boundary values decay as $t \to \infty$). This means that, from the point of view of the leading order asymptotics in the simlarity sector, the boundary values and the initial data play similar roles. It is therefore not surprising that they enter the asymptotic formula in similar ways. 

In the self-similar sector $\mathbb{S}$, the effect of the boundary is much greater. For the problem on the line, the asymptotics of  the solution $u(x,t)$ in this region is given by
\begin{align}
u(x,t) = \frac{u^P\big(\frac{-x}{(3t)^{1/3}}; s, 0, -s\big)}{(3t)^{\frac{1}{3}}} 
+ O\bigl(t^{-\frac{2}{3}}\bigr),\qquad
t \to \infty, \quad (x,t) \in \mathbb{S},
\end{align}
where $u^P$ denotes a particular solution of the Painlev\'e II equation specified by a complex parameter $s \in i\R$, $|s| < 1$ (see Appendix \ref{painleveapp}). 
For the half-line problem, the asymptotic analysis of the relevant RH problem leads to the same asymptotic formula, except that the value of the parameter $s$ now also depends on the boundary values in addition to the initial data. So far, the situation is analogous to what happens in the similarity region. However, if we consider a quarter plane solution of (\ref{mkdv}) with sufficient smoothness and spatial decay, then the associated spectral functions are not independent, but are related by the so-called global relation. It turns out that, since we are assuming that  the boundary values decay as $t \to \infty$, the global relation forces the parameter $s$ to vanish. The solution $u^P$ corresponding to $s = 0$ vanishes identically, so this means that, for the half-line problem, we have $u(x,t) = O(t^{-\frac{2}{3}})$ as $t \to \infty$ in the self-similar region. This observation can be interpreted as follows: In the self-similar sector, since $x < N t^{1/3}$, the boundary is close enough to have a noticeable effect on the solution. Since we assume that the boundary values decay to zero as $t \to \infty$, the solution gets drawn down to zero faster in the presence of the boundary than in its absence.

\subsection{Organisation of the paper}
In Section \ref{mkdvsec}, we review the construction of solutions of (\ref{mkdv}) in the quarter plane via RH techniques. In Sections \ref{steepsec} and \ref{steepsec2}, we prove the nonlinear steepest descent results Theorem \ref{steepestdescentth} and Theorem \ref{steepestdescentthIV}, respectively. 
In Sections \ref{similaritysec} and \ref{selfsimilarsec}, we apply these theorems to find the asymptotics of the quarter plane solution of (\ref{mkdv}) in the similarity and self-similar sectors, respectively.
A few facts related to the RH problem associated with the Painlev\'e II equation are collected in the appendix. These facts are used in Section \ref{painlevesec} to establish the solution of a model RH problem which is essential for the proof of Theorem \ref{steepestdescentthIV}.

\subsection{Open problems}
We end the introduction by listing three interesting, largely open, problems:
\begin{enumerate}[1.]
\item In this paper we determine the asymptotic behavior of $u(x,t)$ provided that all the boundary values $\{\partial_x^ju(0, t)\}_0^2$ are known. However, for a well-posed problem, only a subset of the initial and boundary values can be independently prescribed. If all boundary values are not known, our asymptotic formulas (see Theorem \ref{mainth1} and Theorem \ref{mainth2}) still provide some information on the solution, but since the function $r(k)$ is unknown, the exact form of the asymptotics remains undetermined. It is an outstanding open problem to derive asymptotic formulas which involve only the prescribed data.

\item The asymptotic formulas of Theorem \ref{mainth1} and Theorem \ref{mainth2} only apply provided that the boundary values $g_j(t) = \partial_x^ju(0,t)$, $j = 0, 1, 2$, possess a certain amount of decay as $t \to \infty$. For example, in view of Theorem \ref{existenceth}, the following decay assumption is sufficient:
$$(1+t)^{11}g_j^{(i)}(t) \in L^1([0,\infty)), \qquad  j = 0, 1, 2, \quad i = 0,1, \dots, 4.$$
However, as already mentioned, only a subset of the boundary values can be prescribed for a well-posed problem. Thus, even though the given boundary data decay as $t \to \infty$, it is not immediate that all the boundary values decay as $t \to \infty$. Is it true that decaying data leads to decaying boundary values in general? The investigation of this question involves an analysis of the generalized Dirichlet to Neumann map. Important progress in this direction has been made by Antonopoulou and Kamvissis for the NLS equation on the half-line. For the defocusing NLS equation with vanishing initial data, they have shown that if the Dirichlet data decay as $t \to \infty$, then so does the Neumann value \cite{AK2015}.

\item Another challenging but very important problem consists of deriving asymptotic formulas for solutions whose boundary values do not decay as $t \to \infty$. In this case, the definition of the spectral functions $r(k)$  and $h(k)$ has to be modified. If the asymptotic behavior of the boundary values as $t \to \infty$ is known, substitutes for $r(k)$  and $h(k)$ can be defined by subtracting off the known asymptotics in the relevant integral equation. Of particular interest in this regard are problems with (asymptotically) time-periodic data. For the NLS equation with periodic boundary conditions, some pioneering results in this direction can be found in \cite{BIK2009, BK2003, BK2007, BKS2009}.
\end{enumerate}

\section{Quarter plane solutions}\nequation\label{mkdvsec}
The mKdV equation (\ref{mkdv}) admits the Lax pair
\begin{align}\label{mulax}
\begin{cases}
\mu_x - ik[\sigma_3, \mu] = \mathsf{U} \mu, \\
\mu_t + 4ik^3[\sigma_3, \mu] = \mathsf{V} \mu,
\end{cases}
\end{align}
where $\mu(x,t,k)$ is a $2\times 2$-matrix valued eigenfunction, $k\in \C$ is the spectral parameter, and 
\begin{align*}
& \sigma_3 = \begin{pmatrix} 1 & 0 \\ 0 & -1 \end{pmatrix}, \qquad
\mathsf{U}(x,t) = \begin{pmatrix} 0 & u \\ u & 0 \end{pmatrix}, 
	\\
& \mathsf{V}(x,t,k) = \begin{pmatrix} -2i u^2 k & -4uk^2 + 2iu_x k - 2 u^3 + u_{xx} \\
-4uk^2 - 2iu_x k - 2 u^3 + u_{xx} & 2i u^2 k \end{pmatrix}.
\end{align*}
We define the spectral functions $\{a(k), b(k), A(k), B(k)\}$ by
\begin{align}
\mathsf{X}(0,k) = \begin{pmatrix} 
\overline{a(\bar{k})} 	&	b(k)	\\
\overline{b(\bar{k})}	&	a(k)
\end{pmatrix},	\qquad 
\mathsf{T}(0,k) = \begin{pmatrix} 
\overline{A(\bar{k})} 	&	B(k)	\\
\overline{B(\bar{k})}	&	A(k)
\end{pmatrix},
\end{align}
where $\mathsf{X}(x,k)$ and $\mathsf{T}(t,k)$ are the solutions of the Volterra integral equations
\begin{align*}
 & \mathsf{X}(x,k) = I + \int_{\infty}^x e^{-i k(x'-x)\hat{\sigma}_3} \mathsf{U}(x',0) \mathsf{X}(x',k) dx',
	\\
& \mathsf{T}(t,k) = I + \int_{\infty}^t e^{4i k^3(t'-t)\hat{\sigma}_3} \mathsf{V}(0,t', k) \mathsf{T}(t',k) dt',
 \end{align*}
and $e^{\hat{\sigma}_3}$ acts on a $2 \times 2$ matrix $M$ by $e^{\hat{\sigma}_3}M = e^{\sigma_3} M e^{-\sigma_3}$. The open domains $\{D_j\}_1^4$ of the complex $k$-plane are defined by (see Figure \ref{Djsreversed.pdf})
\begin{align}\nonumber
D_1 = \{\im k < 0\} \cap \{\im k^3 > 0\},  \qquad
D_2 = \{\im k < 0\} \cap \{\im k^3 < 0\}, 
	\\ \nonumber
D_3 = \{\im k > 0\} \cap \{\im k^3 > 0\},  \qquad
D_4 = \{\im k > 0\} \cap \{\im k^3 < 0\}.
\end{align}
We let $\Sigma$ denote the contour separating the $D_j$'s oriented as in Figure \ref{Djsreversed.pdf}.
\begin{figure}
\begin{center}
\bigskip \bigskip
\begin{overpic}[width=.45\textwidth]{Djsreversed.pdf}
      \put(77,57){$D_3$}
      \put(46,70){$D_4$}
      \put(15,57){$D_3$}
      \put(15,25){$D_2$}
      \put(46,12){$D_1$}
      \put(77,25){$D_2$}
      \put(57,47){$\pi/3$}
      \put(102,41.5){$\Sigma$}
      \end{overpic}
     \begin{figuretext}\label{Djsreversed.pdf}
       The contour $\Sigma$ and the domains $\{D_j\}_1^4$ in the complex $k$-plane.
     \end{figuretext}
     \end{center}
\end{figure}

We define $h(k)$ and $r(k)$ by
\begin{subequations}\label{hrdef}
\begin{align}\label{hdef}
& h(k) = -\frac{ \overline{B(\bar{k})}}{a(k) d(k)}, \qquad k \in \bar{D}_2,
	\\ \label{rdef}
& r(k) = \frac{\overline{b(\bar{k})}}{a(k)} + h(k), \qquad k \in \R,
\end{align}
\end{subequations}
where $d(k) = a(k)\overline{A(\bar{k})} -  b(k) \overline{B(\bar{k})}$ and $\bar{D}_2$ denotes the closure in $\C$ of the set $D_2$.
It was shown in \cite{Lmkdvrigorous} that if the initial data $u_0(x) = u(x,0)$ and the boundary values $g_j(t) = \partial_x^j u(0,t)$, $j = 0,1,2$, satisfy
\begin{align}\label{ugjassump}
\begin{cases}
u_0 \in C^{m+1}([0,\infty)), &
	\\
(1+x)^{n}u_0^{(i)}(x) \in L^1([0,\infty)), & i = 0,1, \dots, m+1,
	\\
g_j \in C^{[\frac{m+5 - j}{3}]}([0,\infty)), & j = 0,1,2,
	\\
(1+t)^{n}g_j^{(i)}(t) \in L^1([0,\infty)), & j = 0, 1, 2, \quad i = 0,1, \dots, [\frac{m + 5 - j}{3}],
\end{cases}
\end{align}
for some integers $n,m \geq 1$, and the spectral functions satisfy the so-called global relation
\begin{align}\label{GR}
& A(k)b(k) - B(k)a(k) = 0, \qquad k \in \bar{D}_1,
\end{align}
then $r(k)$ and $h(k)$ have the following properties:
\begin{itemize}
\item $r \in C^n(\R)$.

\item $h(k)$ is analytic in $D_2$ and $h^{(j)}(k)$ has a continuous extension to $\bar{D}_2$ for each  $j = 0, 1, \dots, n$.

\item There exist complex constants $\{h_i\}_1^m$ such that, for each $j = 0,1, \dots, n$,
\begin{align}\label{rjkexpansion}
& r^{(j)}(k) = O(k^{-m-1 + 2j}), \qquad |k| \to \infty, \quad k \in \R, 
	\\ \label{hjkexpansion}
& h^{(j)}(k) = \frac{d^j}{dk^j}\bigg(\frac{h_1}{k} + \cdots + \frac{h_m}{k^m}\bigg) + O(k^{-m-1 + 2j}), \qquad k \to \infty, \quad k \in \bar{D}_2.
\end{align}

\item $r(k) = \overline{r(-\bar{k})}$ for $k \in \R$ and $h(k) = \overline{h(-\bar{k})}$ for $k \in \bar{D}_2$.

\item $\sup_{k \in \R} |r(k)| < 1$.
\end{itemize}

The following theorem expresses the quarter plane solution of (\ref{mkdv}) in terms of the solution of a $2 \times 2$-matrix RH problem (see Theorem 7 of \cite{Lmkdvrigorous} for a detailed proof). 

\begin{theorem}\label{existenceth}
Suppose $u_0, g_0, g_1, g_2$ satisfy (\ref{ugjassump}) with $n = 1$ and $m = 4$.
Define the jump matrix $J(x,t,k)$ by 
\begin{align}\label{Jdef}
&J(x,t,k) = \begin{cases} 
 \begin{pmatrix} 1 & 0 \\ h(k) e^{-2ikx + 8ik^3t} & 1 \end{pmatrix}, & k \in \partial D_1,
	\\
 \begin{pmatrix} 1 & - \overline{r(\bar{k})} e^{2ikx - 8ik^3t} \\
 r(k)e^{-2ikx + 8ik^3t}& 1 -  |r(k)|^2\end{pmatrix}, & k \in \R,
	\\ 
 \begin{pmatrix} 1 & \overline{h(\bar{k})} e^{2ikx - 8ik^3t} \\ 0 & 1 \end{pmatrix}, & k \in \partial D_4.
\end{cases}
\end{align}
Suppose the homogeneous RH problem determined by $(\Sigma, J(x,t,\cdot))$ has only the trivial solution for each $(x,t) \in [0, \infty) \times [0,\infty)$.
Suppose the spectral functions satisfy the global relation (\ref{GR}).

Then the $L^2$-RH problem\footnote{Here the generalized Smirnoff class $\dot{E}^2(\C\setminus \Sigma)$ consists of functions $f(k)$ analytic in $\C\setminus \Sigma$ with the property that for each component $D_j$ of $\C\setminus \Sigma$ there exist curves $\{C_n\}_1^\infty$ in $D_j$ such that $C_n$ eventually surround each compact subset of $D_j$ and $\sup_{n \geq 1} \|f\|_{L^2(C_n)} < \infty$.}
\begin{align}\label{RHM}
\begin{cases}
M(x, t, \cdot) \in I + \dot{E}^2(\C \setminus \Sigma),\\
M_+(x,t,k) = M_-(x, t, k) J(x, t, k) \quad \text{for a.e.} \ k \in \Sigma,
\end{cases}
\end{align}
has a unique solution for each $(x,t) \in [0,\infty) \times [0, \infty)$. Moreover, the nontangential limit\footnote{The notation $\displaystyle{\ntlim_{k\to \infty}}$ indicates the limit as $k \in \C$ approaches $\infty$  nontangentially with respect to $\Gamma$.}
\begin{align}\label{ulim}
u(x,t) = -2i\ntlim_{k\to\infty} (kM(x,t,k))_{12}
\end{align}
exists for each $(x,t) \in [0,\infty) \times [0, \infty)$ and the function $u(x,t)$ defined by (\ref{ulim}) is a solution of the mKdV equation (\ref{mkdv}) in the quarter plane $\{x \geq 0, t \geq0\}$ such that 
\begin{enumerate}[$(a)$]
\item $u:[0,\infty) \times [0, \infty) \to \R$ is $C^3$ in $x$ and $C^1$ in $t$.

\item $u(0,t) = g_0(t)$, $u_x(0,t) = g_1(t)$, and $u_{xx}(0,t) = g_2(t)$ for $t \geq 0$.

\item $u(x,0) = u_0(x)$ for $x \geq 0$.
\end{enumerate}
\end{theorem}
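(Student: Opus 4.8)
I would follow the standard Riemann--Hilbert reconstruction strategy (cf.\ \cite{Lmkdvrigorous}), in four steps. \textbf{Solvability of the RH problem.} Uniqueness is immediate: a direct check from (\ref{Jdef}) gives $\det J\equiv1$ on every ray of $\Sigma$, so if $M,\tilde M$ both solve (\ref{RHM}) then $M\tilde M^{-1}\in I+\dot E^2(\C\setminus\Sigma)$ has no jump, whence $M\tilde M^{-1}-I$ solves the homogeneous problem and vanishes by hypothesis; a similar argument gives $\det M\equiv1$, so $M^{-1}$ exists pointwise. For existence one writes (\ref{RHM}) in Beals--Coifman form with a bounded Cauchy operator $\mathcal C_w$ on $L^2(\Sigma)$ and shows that $I-\mathcal C_w$ is Fredholm of index zero; this uses the structural symmetry of $J$ --- the positivity $\tfrac12(J+J^\dagger)=\diag(1,\,1-|r(k)|^2)>0$ along $\R$ (valid since $\sup_{k\in\R}|r(k)|<1$) together with the Schwarz reflection $J|_{\partial D_1}(k)=\big(J|_{\partial D_4}(\bar k)\big)^\dagger$ interchanging the two unipotent jumps. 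Since $\ker(I-\mathcal C_w)$ is exactly the homogeneous RH problem, which is trivial by hypothesis, $I-\mathcal C_w$ is invertible and the usual integral formula produces $M$. Finally, uniqueness promotes the symmetries $r(k)=\overline{r(-\bar k)}$, $h(k)=\overline{h(-\bar k)}$ to a symmetry of $M(x,t,\cdot)$; comparing large-$k$ expansions, this forces $M_1$ (see below) to be purely imaginary and $\mathsf U$ to be symmetric and off-diagonal, so that $u$ in (\ref{ulim}) is real.

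\textbf{Dressing.} Since $J(x,t,k)=e^{(ikx-4ik^3t)\hat\sigma_3}J_0(k)$ with $J_0$ independent of $(x,t)$, one has $\partial_xJ=ik[\sigma_3,J]$ and $\partial_tJ=-4ik^3[\sigma_3,J]$. Differentiating $M_+=M_-J$ and inserting these identities shows that $M_x-ik[\sigma_3,M]$ and $M_t+4ik^3[\sigma_3,M]$ obey the same jump as $M$, so $(M_x-ik[\sigma_3,M])M^{-1}$ and $(M_t+4ik^3[\sigma_3,M])M^{-1}$ are entire in $k$. From the expansion $M=I+k^{-1}M_1+k^{-2}M_2+k^{-3}M_3+O(k^{-4})$ as $k\to\infty$ (valid because $J-I$ is smooth and decays fast enough: use (\ref{rjkexpansion}) and (\ref{hjkexpansion}), the exponential decay of $J-I$ on $\partial D_1\cup\partial D_4$ for $x>0$, and, at $x=0$, a preliminary explicit transformation absorbing the rational part $h_1/k+\cdots+h_4/k^4$ of $h$) the first combination is bounded and the second grows at most like $k^2$, so by Liouville they equal a $k$-independent matrix $\mathsf U(x,t)$ and a quadratic polynomial $\mathsf V(x,t,k)$. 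Matching powers of $k^{-1}$ order by order expresses $\mathsf U$ and the coefficients of $\mathsf V$ through $M_1,M_2,M_3$ and their $x$-derivatives; one obtains $\mathsf U=\begin{pmatrix}0&u\\u&0\end{pmatrix}$ with $u=-2i(M_1)_{12}$ as in (\ref{ulim}), and $\mathsf V$ of exactly the form in (\ref{mulax}) with entries built from $u,u_x,u_{xx}$. Writing $\Psi=Me^{(ikx-4ik^3t)\sigma_3}$, the two equations become $\Psi_x=(ik\sigma_3+\mathsf U)\Psi$ and $\Psi_t=(-4ik^3\sigma_3+\mathsf V)\Psi$; they are compatible because $\Psi$ is a common invertible solution, and their zero-curvature compatibility condition is precisely the mKdV equation (\ref{mkdv}).

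\textbf{Initial and boundary values.} This step, in which the global relation (\ref{GR}) is indispensable, is the main obstacle. By (\ref{hrdef}), $r$ and $h$ are built from $a,b$ (encoding $u_0$ via the Volterra equation for $\mathsf X$) and $A,B$ (encoding $g_0,g_1,g_2$ via $\mathsf T$). Restricting to $t=0$ and using $r(k)=\overline{b(\bar k)}/a(k)+h(k)$ on $\R$ together with (\ref{GR}) to continue $h$ through $\bar D_2$, one finds that the three jumps of $J(x,0,\cdot)$ across $\partial D_1$, $\R$ and $\partial D_4$ collapse: after multiplying $M(x,0,\cdot)$ by sectionally constant matrices formed from $a,b$ in the domains $D_j$, the result solves the RH problem associated with $\mathsf X(x,\cdot)$, whose large-$k$ expansion recovers $u_0(x)$; hence $u(x,0)=u_0(x)$, i.e.\ $(c)$. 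Symmetrically, at $x=0$ the problem for $J(0,t,\cdot)$ reduces to the RH problem for $\mathsf T(t,\cdot)$, and reading off the first coefficients of its large-$k$ expansion yields $u(0,t)=g_0(t)$, $u_x(0,t)=g_1(t)$, $u_{xx}(0,t)=g_2(t)$, i.e.\ $(b)$. The delicate part is the careful bookkeeping of all the spectral relations and of the analyticity domains $D_j$ needed to verify that the global relation produces exactly the cancellations that make these two reductions work.

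\textbf{Regularity.} Finally, $(a)$ follows by differentiating the Beals--Coifman integral equation for $M$ in $x$ and $t$: each $\partial_x$ brings down a factor $k$ from $e^{\pm2ikx}$ and each $\partial_t$ a factor $k^3$ from $e^{\mp8ik^3t}$, so one must control $k^j(J-I)$ in $L^1(\Sigma)\cap L^2(\Sigma)$ for $j\le3$. After subtracting the explicit rational leading terms of $h$ and $r$ (which contribute smooth rational corrections to $M$), the remainders are $O(k^{-5})$ by (\ref{rjkexpansion})--(\ref{hjkexpansion}), so $k^3$ times a remainder is integrable, while on $\partial D_1\cup\partial D_4$ the exponential factors are bounded and the decay of $h$ suffices after the subtraction. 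This yields $u\in C^3$ in $x$ and $C^1$ in $t$. The choice $n=1$, $m=4$ in (\ref{ugjassump}) is exactly what makes three $x$-derivatives and one $t$-derivative admissible and, via the reductions of the previous step, also ensures the regularity of $\mathsf X$ and $\mathsf T$ required there.
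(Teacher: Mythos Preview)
The paper does not give a proof of this theorem in the text; it simply refers the reader to Theorem~7 of \cite{Lmkdvrigorous} for a detailed proof. Your four-step outline (solvability via the Beals--Coifman/Fredholm framework, dressing to produce the Lax pair and hence (\ref{mkdv}), recovery of the initial and boundary values through the global relation, and regularity by differentiating the singular integral equation) is exactly the strategy carried out in that reference, so your approach is essentially the same as the one the paper defers to.

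One small comment on your existence step: you invoke the positivity of $\tfrac12(J+J^\dagger)$ on $\R$ and the Schwarz relation between the jumps on $\partial D_1$ and $\partial D_4$ as input for the Fredholm property. Those structural features are what one normally uses to \emph{prove} a vanishing lemma (i.e.\ that the homogeneous problem has only the trivial solution); the Fredholm-of-index-zero property itself comes rather from the decay $J-I\to 0$ along $\Sigma$. In the theorem as stated, triviality of the homogeneous problem is taken as a \emph{hypothesis}, so your positivity/symmetry discussion is not needed here---though it is precisely how one would go about removing that hypothesis.
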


The long time asymptotics of the quarter plane solution $u(x,t)$  can be determined via a nonlinear steepest descent analysis of the RH problem (\ref{RHM}). The next section contains a nonlinear steepest descent theorem (Theorem \ref{steepestdescentth}) suitable for finding the asymptotics of $u$ in the similarity sector (\ref{similaritysector}).

\begin{remark}\upshape
Equation (\ref{rjkexpansion}) shows that for smooth initial and boundary values, the spectral function $r(k)$ decays quickly as $k \to \infty$. Thus, as in the case of the initial value problem, the jump across $\R$ vanishes quickly as $k \to \infty$. In contrast, regardless of how smooth the initial and boundary values are, the function $h(k)$ in general only decays as $1/k$ as $k \to \infty$, see (\ref{hjkexpansion}). We will overcome this difficulty by making a careful choice of the analytic approximation $h_a$. 
\end{remark}

\section{A nonlinear steepest descent theorem} \label{steepsec}
The goal of this section is to prove a nonlinear steepest descent result (Theorem \ref{steepestdescentth}) suitable for determining asymptotics of a class of RH problems which arise in the study of long-time asymptotics in the similarity sector.
Although the theorem is formulated with equation (\ref{mkdv}) in mind, analogous results can be formulated for other IBV problems. Before stating the theorem, we describe the set-up and make a number of assumptions (see Remark \ref{intuitionremark} for a short discussion of the intuition behind these assumptions).

Let $X$ denote the cross $X = X_1 \cup \cdots \cup X_4 \subset \C$ where the rays
\begin{align} \nonumber
&X_1 = \bigl\{se^{\frac{i\pi}{4}}\, \big| \, 0 \leq s < \infty\bigr\}, && 
X_2 = \bigl\{se^{\frac{3i\pi}{4}}\, \big| \, 0 \leq s < \infty\bigr\},  
	\\ \label{Xdef}
&X_3 = \bigl\{se^{-\frac{3i\pi}{4}}\, \big| \, 0 \leq s < \infty\bigr\}, && 
X_4 = \bigl\{se^{-\frac{i\pi}{4}}\, \big| \, 0 \leq s < \infty\bigr\},
\end{align}
are oriented as in Figure \ref{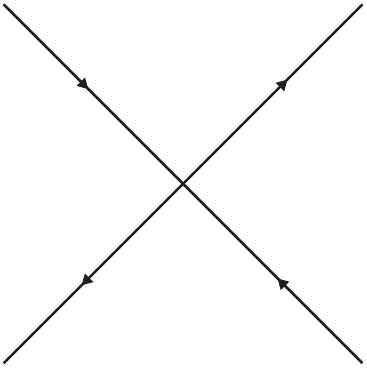}. 
For $r > 0$, let $X^r = X_1^r \cup \cdots \cup X_4^r$ denote the restriction of $X$ to the open disk of radius $r$ centered at the origin, i.e. $X^r = X \cap \{|z| < r\}$. 

Let $\mathcal{I} \subset \R$ be an interval. Let $\rho, \epsilon, k_0:\mathcal{I} \to (0,\infty)$ be bounded strictly positive functions such that $\epsilon(\zeta) < k_0(\zeta)$ for $\zeta \in \mathcal{I}$. We henceforth drop the $\zeta$ dependence of these functions and write simply $\rho$, $\epsilon$, $k_0$ for $\rho(\zeta)$, $\epsilon(\zeta)$, $k_0(\zeta)$, respectively. 

Let $\Gamma = \Gamma(\zeta)$ be a family of oriented contours parametrized by $\zeta \in \mathcal{I}$ and let 
$$\hat{\Gamma} := \Gamma \cup \{k \in \C \, | \, |k \pm k_0| = \epsilon \}$$ 
denote the union of $\Gamma$ with the circles of radius $\epsilon$ centered at $\pm k_0$ oriented counterclockwise. Assume that, for each $\zeta \in \mathcal{I}$:
\begin{itemize}

\item[($\Gamma$1)] $\Gamma$ and $\hat{\Gamma}$ are Carleson jump contours up to reorientation of a subcontour.

\item[($\Gamma$2)] $\Gamma$ contains the two crosses $\pm k_0 + X^{\epsilon}$ as oriented subcontours.

\item[($\Gamma$3)] $\Gamma$ is invariant as a set under the map $k \mapsto - \bar{k}$. Moreover, the orientation of $\Gamma$ is such that if $k$ traverses $\Gamma$ in the positive direction, then $-\bar{k}$ traverses $\Gamma$ in the negative direction.

\item[($\Gamma$4)] The point $\infty$ in the Riemann sphere can be approached nontangentially with respect to $\Gamma$.
\end{itemize}
Assume that the Cauchy singular operator $\mathcal{S}_{\hat{\Gamma}}$ defined by
\begin{align}\label{Cauchysingulardef}
(\mathcal{S}_{\hat{\Gamma}} h)(z) = \lim_{r \to 0} \frac{1}{\pi i} \int_{\hat{\Gamma} \setminus \{z' \, | \, |z' - z| < r\}} \frac{h(z')}{z' - z} dz',
\end{align}
is uniformly bounded on $L^2(\hat{\Gamma})$, i.e. $\sup_{\zeta \in \mathcal{I}} \|\mathcal{S}_{\hat{\Gamma}}\|_{\mathcal{B}(L^2(\hat{\Gamma}))} < \infty$.

\begin{figure}
\begin{center}
 \begin{overpic}[width=.3\textwidth]{X.pdf}
 \put(66,81){$X_1$}
 \put(24,81){$X_2$}
 \put(24,16){$X_3$}
 \put(66,16){$X_4$}
 \end{overpic}
   \bigskip
   \begin{figuretext}\label{X.pdf}
      The contour $X = X_1 \cup \cdots \cup X_4$.
      \end{figuretext}
   \end{center}
\end{figure}
Consider the following family of $L^2$-RH problems parametrized by the two parameters $\zeta \in \mathcal{I}$ and $t > 0$:
\begin{align}\label{RHm}
\begin{cases} m(\zeta, t, \cdot) \in I + \dot{E}^2(\C \setminus \Gamma), \\
m_+(\zeta, t, k) = m_-(\zeta, t, k) v(\zeta, t, k) \quad \text{for a.e.} \ k \in \Gamma, 
\end{cases} 
\end{align}
where the jump matrix $v(\zeta, t, k)$ satisfies
\begin{align}\label{winL1L2Linf}
& w(\zeta, t,\cdot) := v(\zeta, t,\cdot) - I \in L^1(\Gamma) \cap L^2(\Gamma) \cap L^\infty(\Gamma), \qquad \zeta \in \mathcal{I}, \quad t > 0,
	\\
&\det v(\zeta, t, \cdot) = 1 \;\; \text{a.e. on $\Gamma$}, \qquad \zeta \in \mathcal{I}, \quad t > 0, \quad 
\end{align}
and
\begin{align}\label{vsymm}
  v(\zeta,t,k) = \overline{v(\zeta, t, -\bar{k})}, \qquad \zeta \in \mathcal{I}, \quad t > 0, \quad k \in \Gamma.
\end{align}

Let $\tau := t \rho^2$. Let $\Gamma_X$ be the union of the two crosses $\pm k_0 + X^{\epsilon}$ and let $\Gamma' = \Gamma \setminus \Gamma_X$. Suppose 
\begin{align} \label{wL12infty}
\begin{cases}
 \|w(\zeta,t,\cdot)\|_{L^1(\Gamma')} = O(\epsilon \tau^{-1}), 
	\\ 
 \|w(\zeta,t,\cdot)\|_{L^\infty(\Gamma')} = O(\tau^{-1}), 
\end{cases}\qquad \tau \to \infty, \quad \zeta \in \mathcal{I}, 
\end{align}
uniformly with respect to $\zeta \in \mathcal{I}$. 
Moreover, suppose that near $k_0$ the normalized jump matrix
\begin{align}\label{vjdef}
v_0(\zeta,t,z) = \sigma_3 v\biggl(\zeta, t, k_0 - \frac{\epsilon z}{\rho}\biggr)\sigma_3, \qquad z \in X^{\rho},
\end{align}
has the form
\begin{align}\label{smallcrossjump}
v_0(\zeta, t, z) = \begin{cases}
\begin{pmatrix} 1 & 0	\\
  R_1(\zeta, t, z)z^{-2i\nu(\zeta)} e^{t\phi(\zeta, z)}	& 1 \end{pmatrix}, &  z \in X_1^{\rho}, \\
\begin{pmatrix} 1 & -R_2(\zeta, t, z)z^{2i\nu(\zeta)}e^{-t\phi(\zeta, z)}	\\
0 & 1  \end{pmatrix}, &  z \in X_2^{\rho}, \\
\begin{pmatrix} 1 &0 \\
 -R_3(\zeta, t, z)z^{-2i\nu(\zeta)} e^{t\phi(\zeta, z)}	& 1 \end{pmatrix}, &  z \in X_3^{\rho},
 	\\
 \begin{pmatrix} 1	& R_4(\zeta, t, z)z^{2i\nu(\zeta)}e^{-t\phi(\zeta, z)}	\\
0	& 1 \end{pmatrix}, & z \in X_4^{\rho}, 
\end{cases}
\end{align}
where:
\begin{itemize}
\item The phase $\phi(\zeta, z)$ is a smooth function of $(\zeta, z) \in \mathcal{I} \times \C$ 
such that 
\begin{align}\label{phiassumptions}
\phi(\zeta, 0) \in i\R, \qquad \frac{\partial \phi}{\partial z}(\zeta, 0) = 0, \qquad \frac{\partial^2 \phi}{\partial z^2}(\zeta, 0) = i,\qquad \zeta \in \mathcal{I},
\end{align}
and
\begin{subequations}
\begin{align} \label{rephiestimatea}
 & \re \phi(\zeta,z) \leq -\frac{|z|^2}{4}, &&  z \in X_1^{\rho} \cup X_3^{\rho}, \quad \zeta \in \mathcal{I},
  	\\ \label{rephiestimateb}
 & \re \phi(\zeta,z) \geq \frac{|z|^2}{4}, &&  z \in X_2^{\rho} \cup X_4^{\rho},  \quad \zeta \in \mathcal{I},
  	\\ \label{Phiz3estimate}
 & \biggl|\phi(\zeta, z) - \phi(\zeta,0) - \frac{iz^2}{2}\biggr| \leq C \frac{|z|^3}{\rho}, && z \in X^{\rho}, \quad \zeta \in \mathcal{I},
\end{align}
\end{subequations}
where $C > 0$ is a constant.

\item There exist a function $q:\mathcal{I} \to \C$ with $\sup_{\zeta \in \mathcal{I}} |q(\zeta)| < 1$, and constants $(\alpha, L) \in [\frac{1}{2},1) \times (0,\infty)$ such that the functions $\{R_j(\zeta, t, z)\}_1^4$ satisfy the following inequalities:
\begin{align} \label{Lipschitzconditions}
\begin{cases}
   |R_1(\zeta, t, z) - q(\zeta)| \leq  L  \bigl| \frac{z}{\rho}\bigr|^\alpha e^{\frac{t|z|^2}{6}}, & z \in X_1^{\rho},  \\
 \left|R_2(\zeta, t, z) - \frac{\overline{q(\zeta)}}{1 - |q(\zeta)|^2}\right| \leq  L \bigl| \frac{z}{\rho}\bigr|^\alpha e^{\frac{t|z|^2}{6}}, \qquad& z \in X_2^{\rho}, \vspace{.1cm}\\ 
  \left|R_3(\zeta, t, z) - \frac{q(\zeta)}{1 - |q(\zeta)|^2}\right| \leq  L \bigl| \frac{z}{\rho}\bigr|^\alpha e^{\frac{t|z|^2}{6}}, & z \in X_3^{\rho}, 
  	\\
  |R_4(\zeta, t, z) - \overline{q(\zeta)}| \leq L  \bigl| \frac{z}{\rho}\bigr|^\alpha e^{\frac{t|z|^2}{6}}, & z \in X_4^{\rho}, 
\end{cases} \quad \zeta \in \mathcal{I}, \quad t > 0.
\end{align}
\item The function $\nu(\zeta)$ is defined by $\nu(\zeta) = -\frac{1}{2\pi} \ln(1 -  |q(\zeta)|^2)$.
\end{itemize}

\begin{theorem}[Nonlinear steepest descent]\label{steepestdescentth}
Under the above assumptions, the $L^2$-RH problem (\ref{RHm}) has a unique solution for all sufficiently large $\tau$ and this solution satisfies
\begin{align}\label{limlm12}
\ntlim_{k\to \infty} (k m(\zeta,t,k))_{12}
= -\frac{2i\epsilon \re{\beta(\zeta, t)}}{\sqrt{\tau}}  + O\bigl(\epsilon \tau^{-\frac{1+\alpha}{2}} \bigr), \qquad \tau \to \infty, \quad \zeta \in \mathcal{I},
\end{align}
where the error term is uniform with respect to $\zeta \in \mathcal{I}$ and $\beta(\zeta, t)$ is defined by
\begin{align}\label{betadef}
  \beta(\zeta, t) = \sqrt{\nu(\zeta)} e^{i\left(\frac{\pi}{4} - \arg q(\zeta) + \arg \Gamma(i\nu(\zeta))\right)} e^{-t\phi(\zeta, 0)} t^{-i\nu(\zeta)}. 
\end{align}  
The existence of the nontangential limit in (\ref{limlm12}) for all sufficiently large $\tau$ is part of the conclusion of the theorem.
\end{theorem}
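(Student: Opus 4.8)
The plan is to carry out the Deift--Zhou nonlinear steepest descent scheme in the present abstract setting. By (\ref{wL12infty}) the jump $v$ is uniformly within $O(\tau^{-1})$ of the identity on $\Gamma' = \Gamma\setminus\Gamma_X$, and is of order one only on the two disjoint crosses $\pm k_0 + X^\epsilon$ (disjoint because $\epsilon<k_0$). Accordingly, I would construct explicit local parametrices $m^{k_0}$ and $m^{-k_0}$ inside the two disks $\{|k\mp k_0|<\epsilon\}$, take the global parametrix to be simply $I$, and reduce the RH problem (\ref{RHm}) to a small-norm RH problem on $\hat\Gamma$ for the quotient $\hat m = m\,(m^{\mathrm{par}})^{-1}$, where $m^{\mathrm{par}}$ equals $m^{\pm k_0}$ inside the two disks and $I$ elsewhere. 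The explicit ingredient is the model RH problem on $X$ obtained from (\ref{smallcrossjump}) via (\ref{vjdef}) by freezing the coefficients $R_j$ at the limiting constants appearing on the left-hand sides of (\ref{Lipschitzconditions}) and replacing $\phi(\zeta,z)$ by its quadratic Taylor polynomial $\phi(\zeta,0)+iz^2/2$ (cf.\ (\ref{phiassumptions})); after the rescaling $z\mapsto z/\sqrt{t}$ this is the standard parabolic cylinder model problem, which has a unique solution $m^X(\zeta,t,\cdot)$ with $m^X(z)=I+z^{-1}m_1^X(\zeta,t)+O(z^{-2})$ as $z\to\infty$, uniformly in $\zeta,t$, where $m_1^X$ is off-diagonal; the unimodular factors $t^{i\nu}$ and $e^{t\phi(\zeta,0)}$ generated by the rescaling combine with the explicit (Gamma-function) constant coming from the model to make $(m_1^X)_{12}=-i\beta(\zeta,t)$ with $\beta$ as in (\ref{betadef}). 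I would quote or re-derive this model solution; in the similarity case it is the classical one, unlike the Painlev\'e model needed for Theorem \ref{steepestdescentthIV}.

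Concretely, inside $\{|k-k_0|<\epsilon\}$ I would set $m^{k_0}(\zeta,t,k)=m^X\bigl(\zeta,t,\sqrt{t}\,\tfrac{\rho}{\epsilon}(k_0-k)\bigr)$, the change of variables being the one dictated by (\ref{vjdef}); this turns $e^{t(\phi(\zeta,0)+iz^2/2)}$ into $e^{iw^2/2}$, and on the boundary circle $|k-k_0|=\epsilon$ one has $|w|=\rho\sqrt{t}=\sqrt{\tau}$, whence $m^{k_0}(k)=I+\tfrac{\epsilon}{\sqrt{\tau}}\,\tfrac{m_1^X}{k_0-k}+O(\tau^{-1})$ uniformly in $\zeta$. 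Inside $\{|k+k_0|<\epsilon\}$ I would define $m^{-k_0}(\zeta,t,k):=\overline{m^{k_0}(\zeta,t,-\bar k)}$; because of the symmetry (\ref{vsymm}) together with the orientation property ($\Gamma$3) this has the correct jump along $-k_0+X^\epsilon$, and on $|k+k_0|=\epsilon$ it satisfies $m^{-k_0}(k)=I+\tfrac{\epsilon}{\sqrt{\tau}}\,\tfrac{\overline{m_1^X}}{k_0+k}+O(\tau^{-1})$.

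Let $\hat w:=\hat v-I$ denote the jump of $\hat m$. It equals $v-I$ on $\Gamma'$, equals $m^{\pm k_0}-I$ on the two circles, and on the crosses inside the disks equals a bounded conjugate of (frozen jump minus true jump). One bounds the first piece by (\ref{wL12infty}); the second by $m^{\pm k_0}=I+O(\tau^{-1/2})$ (valid since $|w|=\sqrt\tau$ there); and the third by combining the Lipschitz bounds (\ref{Lipschitzconditions}), the cubic phase bound (\ref{Phiz3estimate}) and the decay bounds (\ref{rephiestimatea})--(\ref{rephiestimateb}) — so that the growth factor $e^{t|z|^2/6}$ is more than compensated by $|e^{\pm t\phi}|\leq e^{-t|z|^2/4}$, leaving a net Gaussian $e^{-t|z|^2/12}$ — and then rescaling $s\mapsto s/\sqrt{t}$. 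This should yield $\|\hat w\|_{L^1(\hat\Gamma)}=O(\epsilon\tau^{-1/2})$, $\|\hat w\|_{L^2(\hat\Gamma)}=O(\sqrt{\epsilon}\,\tau^{-1/2})$ and $\|\hat w\|_{L^\infty(\hat\Gamma)}=O(\tau^{-\alpha/2})$, all uniform in $\zeta$. Since $\mathcal{S}_{\hat\Gamma}$ is uniformly bounded on $L^2(\hat\Gamma)$, so are the associated Cauchy projections, and hence $I-\mathcal{C}_{\hat w}$ (with $\mathcal{C}_{\hat w}f:=\mathcal{C}_-(f\hat w)$) is invertible on $L^2(\hat\Gamma)$ for all $\tau$ large enough, uniformly in $\zeta$. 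This gives existence and uniqueness of $\hat m$ — hence, undoing the quotient, of $m$ — together with $\|\mu-I\|_{L^2(\hat\Gamma)}=O(\sqrt{\epsilon}\,\tau^{-1/2})$ for the density $\mu:=(I-\mathcal{C}_{\hat w})^{-1}I$.

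Finally, since $m^{\mathrm{par}}=I$ off two bounded disks, $m=\hat m$ for large $|k|$, and from the integral representation $\hat m(k)=I+\tfrac{1}{2\pi i}\int_{\hat\Gamma}\tfrac{(\mu\hat w)(z)}{z-k}\,dz$ together with ($\Gamma$4) one gets that the nontangential limit in (\ref{limlm12}) exists and equals $\ntlim_{k\to\infty}(k\hat m)_{12}=-\tfrac{1}{2\pi i}\int_{\hat\Gamma}(\mu\hat w)_{12}(z)\,dz$. Splitting the integral: the $\Gamma'$ part is $O(\epsilon\tau^{-1})$; the cross part is $O(\epsilon\tau^{-(1+\alpha)/2})$; and on the two circles the contribution of $(\mu-I)\hat w$ is $O\bigl(\|\mu-I\|_{L^2}\|\hat w\|_{L^2}\bigr)=O(\epsilon\tau^{-1})$, whereas replacing $\mu$ by $I$, inserting the expansions of $m^{\pm k_0}$ above, and evaluating by residues (the circle about $k_0$ contributes $\tfrac{\epsilon}{\sqrt{\tau}}(m_1^X)_{12}$ and that about $-k_0$ contributes $-\tfrac{\epsilon}{\sqrt{\tau}}\overline{(m_1^X)_{12}}$, the two combining via the symmetry into a real part) produces the leading term $-\tfrac{2i\epsilon\,\re\beta}{\sqrt{\tau}}$. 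Collecting everything yields (\ref{limlm12}), the error $O(\epsilon\tau^{-(1+\alpha)/2})$ dominating $O(\epsilon\tau^{-1})$ since $\alpha<1$. The \emph{main obstacle} should be the third step: establishing the $\hat w$-bounds on the crosses uniformly in $\zeta$ with the sharp exponent $(1+\alpha)/2$ — precisely where the interplay between (\ref{Lipschitzconditions}), (\ref{Phiz3estimate}) and (\ref{rephiestimatea})--(\ref{rephiestimateb}) is used — and then checking that this exponent, rather than something weaker, propagates to the final error term. A secondary technical point is the exact solution of the model RH problem and the precise constant $\beta$ in (\ref{betadef}), which fixes every phase in the answer.
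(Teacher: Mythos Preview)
The paper does not actually prove Theorem~\ref{steepestdescentth}: it omits the argument entirely, referring to an analogous result in \cite{Lnonlinearsteepest}. Your outline is the standard Deift--Zhou scheme---parabolic-cylinder local parametrices on the two crosses, trivial global parametrix, reduction to a small-norm problem on $\hat\Gamma$, and extraction of the leading term by residues on the two small circles---and is essentially what that reference carries out. One small point to track when you fill in the details: the normalized jump (\ref{vjdef}) carries a $\sigma_3$-conjugation that your definition of $m^{k_0}$ does not reflect; this flips the sign of the off-diagonal entries of $m_1^X$ and enters the final residue bookkeeping, though it does not affect the structure of the argument or the error estimates.
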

\begin{proof}
We omit the proof since it is similar to the proof of an analogous theorem in \cite{Lnonlinearsteepest}.
\end{proof}

\begin{remark}\upshape\label{intuitionremark}
When applying Theorem \ref{steepestdescentth} to the mKdV equation in Section \ref{similaritysec}, we will identify $\zeta$ with the quotient $\zeta := x/t$ and choose $\mathcal{I} = (0, N]$ with  $N > 0$.
Moreover, the two critical points will be located at $\pm k_0$ with $k_0 := \sqrt{\zeta/12}$ and we will take $\epsilon := k_0/2$ and $\rho := \epsilon \sqrt{48 k_0}$, so that $\tau = t\rho^2 =  \sqrt{x^3/(12 t)}$.
The contours $\Gamma$ and $\hat{\Gamma}$ will be as displayed in Figure \ref{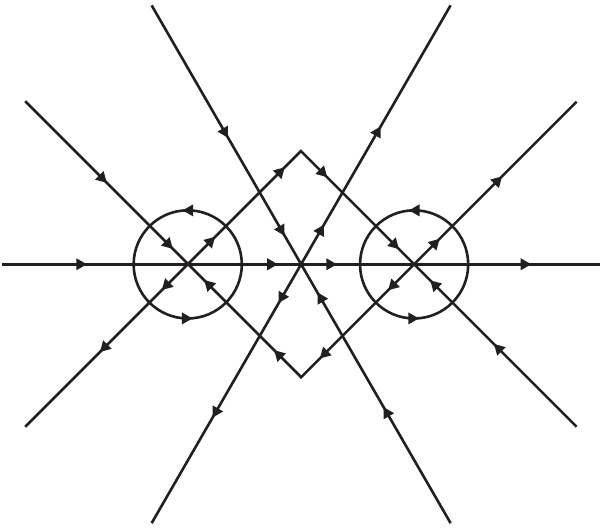}.
The idea is that the disks of radius $\epsilon$ centered at $\pm k_0$ provide small neighborhoods of the two critical points in the $k$-plane, such that as $t \to \infty$ the jump matrix is close to the identity matrix everywhere except on the two small crosses $ \pm k_0 + X^{\epsilon}$ (cf. assumption (\ref{wL12infty})). 
Near the critical point $k_0$, it is convenient to work with the variable $z = \frac{\rho(k_0 -k)}{\epsilon}$ (i.e. $k = k_0 - \frac{\epsilon z}{\rho}$) which centers the critical point at the origin and maps the cross $k_0 + X^{\epsilon}$ of radius $\epsilon$ in the $k$-plane to the cross $X^\rho$ of radius $\rho$ in the $z$-plane. The scaling factor $\rho$ is chosen so that the jump matrix takes the standard form (\ref{smallcrossjump}) in the $z$-plane.
The variable $\tau$ is introduced so that $\tau \to \infty$ corresponds to the condition $x^3/t \to \infty$ in the definition (\ref{similaritysector}) of the similarity sector.
\end{remark}

\section{Asymptotics in the similarity sector}\label{similaritysec}\nequation
The goal of this section is to prove Theorem \ref{mainth1} which provides an asymptotic formula for the quarter plane solutions of the mKdV equation (\ref{mkdv}) in the similarity sector.
The proof is essentially an application of Theorem \ref{steepestdescentth}.

Let $\zeta = x/t$ and define the variables $k_0 = k_0(\zeta)$ and $\tau = \tau(x,t)$ by
\begin{align}
k_0 = \sqrt{\frac{\zeta}{12}}, \qquad \tau = 12 t k_0^3.
\end{align}

\begin{theorem}[Asymptotics in the similarity sector]\label{mainth1}
Suppose $r(k)$ is a function in $C^{11}(\R)$ such that 
\begin{itemize}
\item $r(k) = \overline{r(-\bar{k})}$ for $k \in \R$.

\item $\sup_{k \in \R} |r(k)| < 1$.

\item $r^{(j)}(k) = O(k^{-4 + 2j})$ as $|k| \to \infty$, $k \in \R$, for $j = 0,1, 2$.
\end{itemize}
Suppose $h(k)$ is an analytic function of $k \in D_2$ such that
\begin{itemize}
\item $h^{(j)}(k)$ has a continuous extension to $\bar{D}_2$ for each $j = 0,1, \dots, 5$, that is, $h\in C^5(\bar{D}_2)$.

\item $h(k) = \overline{h(-\bar{k})}$ for $k \in \bar{D}_2$.

\item There exist complex constants $\{h_i\}_1^3$ such that
$$h^{(j)}(k) = \frac{d^j}{dk^j}\bigg(\frac{h_1}{k} + \cdots + \frac{h_3}{k^3}\bigg) + O(k^{-4 + 2j}), \qquad k \to \infty, \quad k \in \partial D_1, \quad j = 0,1, 2.$$
\end{itemize}
Let $\alpha \in [1/2,1)$ and let $N > 0$. 

Then there exists a $T>0$ such that the $L^2$-RH problem (\ref{RHM}) has a unique solution and the limit in (\ref{ulim}) exists whenever $\tau > T$ and $0 < x < Nt$.
Moreover, the function $u(x,t)$ defined by (\ref{ulim}) satisfies
\begin{align}\label{ufinal}
u(x,t) = -\frac{1}{\sqrt{t k_0}}\bigl[u_a(x,t) + O\bigl(\tau^{-\frac{\alpha}{2}}\bigr)\bigr],\qquad
\tau \to \infty, \quad 0 < x < Nt,
\end{align}
where the error term is uniform with respect to $x$ in the given range and the function $u_a$ is defined by
\begin{align}\label{uadef}
& u_a(x,t) = \sqrt{\frac{\nu(\zeta)}{3}} \cos\big(16tk_0^3 - \nu(\zeta) \ln(192 tk_0^3) + \phi(\zeta)\big)
\end{align}
with
\begin{align}\label{phizetadef}
& \phi(\zeta) = \frac{\pi}{4} + \arg \Gamma(i\nu(\zeta)) - \arg r(k_0) 
+ \frac{1}{\pi}\dashint_{\R} \frac{\psi(\zeta, s)ds}{s - k_0},
	\\ \label{nudef}
& \nu(\zeta) = -\frac{1}{2\pi} \ln(1 - |r(k_0)|^2),
	\\ \label{psidef}
& \psi(\zeta, s) = 
\begin{cases} \ln(1 - |r(s)|^2), & |s| > k_0, 
	\\
\ln(1 - |r(k_0)|^2), & |s| < k_0, 
\end{cases} \quad s \in \R.
\end{align}
\end{theorem}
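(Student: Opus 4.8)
The plan is to deduce Theorem~\ref{mainth1} from the nonlinear steepest descent theorem (Theorem~\ref{steepestdescentth}) by transforming the Riemann--Hilbert problem (\ref{RHM}) into the form required there, following the seven-step scheme described in the introduction. Throughout I take $\zeta = x/t \in \mathcal{I} := (0,N]$ and set $k_0 = \sqrt{\zeta/12}$, $\epsilon = k_0/2$, $\rho = \epsilon\sqrt{48 k_0}$, and $q(\zeta) = r(k_0)$; then $\tau = t\rho^2 = 12 t k_0^3$, the functions $\epsilon, k_0, \rho$ are bounded and strictly positive on $\mathcal{I}$ with $\epsilon < k_0$, and $\sup_{\zeta}|q(\zeta)| < 1$ by hypothesis, so the structural requirements of Theorem~\ref{steepestdescentth} on $\mathcal{I}, \epsilon, k_0, \rho, q$ hold. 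The task is to produce an explicit finite chain of invertible transformations $M \mapsto M^{(1)} \mapsto \cdots \mapsto \hat m$ so that $\hat m$ solves an $L^2$-RH problem of the type (\ref{RHm}) whose jump obeys (\ref{winL1L2Linf}), (\ref{vsymm}), (\ref{wL12infty}), and the local normal form (\ref{smallcrossjump})--(\ref{Lipschitzconditions}) on the crosses $\pm k_0 + X^\epsilon$; then (\ref{limlm12})--(\ref{betadef}) give the asymptotics of $\hat m$, and unwinding the chain yields (\ref{ufinal}).

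\emph{Steps 1--3.} First I would construct the analytic approximation $h_a(t,k)$ of $h(k)$ that is special to the IBV problem. The difficulty is that, by (\ref{hjkexpansion}), $h$ decays only like $1/k$, so a naive analytic-plus-small splitting loses too much; instead one subtracts the explicit rational part $h_1/k + h_2/k^2 + h_3/k^3$ (meromorphic with its only pole at $0$, which is harmless because the relevant deformation of $\partial D_1 \cup \partial D_4$ is kept away from the origin on $\mathcal{I}$) and approximates the $C^5$ remainder by a function analytic in a fixed neighbourhood of $\partial D_1 \cup \partial D_4$, choosing the $t$-dependence of $h_a$ so that $h_a(t,k)\,e^{\pm(2ikx-8ik^3 t)}$ is pointwise small away from a neighbourhood of the origin while the non-analytic error $h_r = h - h_a$ is of order $\tau^{-(1+\alpha)/2}$ (or smaller) in $L^1 \cap L^2 \cap L^\infty$ of $\partial D_1 \cup \partial D_4$; the count $h \in C^5(\bar D_2)$ is exactly what makes this possible uniformly in $\zeta$. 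Deforming $\partial D_1$ and $\partial D_4$ into the regions where $h_a\,e^{\pm(2ikx-8ik^3 t)}$ is analytic and exponentially small removes the $h_a$-part of the jump, leaving a contour carrying only the small $h_r$-jump together with the jump across $\R$. One then conjugates by $\Delta = \delta^{\sigma_3}$, where $\delta(k) = \exp\bigl(\tfrac{1}{2\pi i}\int_{-k_0}^{k_0}\tfrac{\ln(1-|r(s)|^2)}{s-k}\,ds\bigr)$ solves the scalar RH problem that yields the triangular factorization on $(-k_0,k_0)$; since $\delta(\infty)=1$, this conjugation preserves the normalization at $\infty$.

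\emph{Steps 4--7.} Next I approximate the conjugated jump along $\R$ by a function analytic near $\R$, using $r \in C^{11}(\R)$ and the decay $r^{(j)}(k)=O(k^{-4+2j})$ to make the non-analytic error exponentially small off $\pm k_0$ and of size $O(\tau^{-(1+\alpha)/2})$ near them (a high-order Taylor/rational split of $r$ at $\pm k_0$, the two points being linked by the symmetry $r(k)=\overline{r(-\bar k)}$). Opening the contour off $\R$ into the sectors where $e^{\pm t\phi}$ decays --- $\phi$ being the rescaling near $k_0$ of the cubic phase $-2ikx + 8ik^3 t$ --- produces a contour $\Gamma$ (and its circled version $\hat\Gamma$) of the type drawn in Figure~\ref{Gammahat.pdf}: it is a Carleson jump contour, it is invariant under $k\mapsto -\bar k$ with the orientation demanded by ($\Gamma$3), it can be approached nontangentially at $\infty$, it contains the crosses $\pm k_0 + X^\epsilon$, and $\mathcal{S}_{\hat\Gamma}$ is uniformly bounded on $L^2(\hat\Gamma)$. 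A Taylor expansion of the phase at $k_0$ gives (\ref{phiassumptions})--(\ref{Phiz3estimate}) with $\phi(\zeta,0) = -16 i k_0^3 \in i\R$ and $\partial_z^2\phi(\zeta,0)=i$ precisely because $\rho = \epsilon\sqrt{48 k_0}$; the off-cross bounds (\ref{wL12infty}) follow from the small-remainder estimates above, and the Lipschitz bounds (\ref{Lipschitzconditions}) for the given $\alpha \in [\tfrac12,1)$ from the H\"older continuity of $r$ and of the analytic factors of $\delta$, with $R_j(\zeta,t,0)$ equal to $q(\zeta)$, $\overline{q(\zeta)}/(1-|q(\zeta)|^2)$, etc., after one final constant-in-$k$ diagonal conjugation by $e^{(\cdot)\sigma_3}$ that absorbs the smooth phase of $\delta$ at $k_0$. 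Theorem~\ref{steepestdescentth} now applies and gives $\ntlim_{k\to\infty}(k\hat m)_{12} = -2i\epsilon\,\re\beta(\zeta,t)/\sqrt\tau + O(\epsilon\tau^{-(1+\alpha)/2})$. Unwinding the chain: the deformations and the $h_r$-jump contribute only $O(\epsilon\tau^{-(1+\alpha)/2})$ to $\ntlim_{k\to\infty}(kM)_{12}$, while the diagonal conjugations multiply $\beta$ by a unimodular factor whose phase is exactly $\tfrac1\pi\dashint_{\R}\tfrac{\psi(\zeta,s)\,ds}{s-k_0}$ and which, together with the parametrix and the rescaling, replaces the bare $t$ inside $t^{-i\nu}$ by $192\,t k_0^3$; since $q = r(k_0)$ and $\phi(\zeta,0)=-16ik_0^3$, the identity $u = -2i\,\ntlim_{k\to\infty}(kM)_{12}$ collapses, on taking real parts and using $\epsilon/\sqrt\tau = (4\sqrt3)^{-1}(t k_0)^{-1/2}$, to (\ref{ufinal}) with $u_a,\nu,\psi$ as in (\ref{uadef})--(\ref{psidef}). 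Existence and uniqueness of the solution of (\ref{RHM}) for large $\tau$ transport back from the corresponding conclusion of Theorem~\ref{steepestdescentth}.

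\emph{Main obstacle.} The genuinely new and hardest part is the construction of $h_a$ and the deformation eliminating it (Steps~1--2): because $h$ does not decay, one must simultaneously exploit the rational structure (\ref{hjkexpansion}) and the finite smoothness $h \in C^5(\bar D_2)$, and --- crucially --- keep all estimates uniform down to the left edge of the similarity sector, where $k_0 \to 0$ and the two critical points, the $\epsilon$-circles about $\pm k_0$, and the pole of the rational part all approach the origin. Tracking this uniformity, both in the $h_a$- and $r_a$-splittings and in the verification of (\ref{wL12infty}) and (\ref{Lipschitzconditions}), and bookkeeping the several diagonal conjugations so that the final phase emerges exactly as in (\ref{phizetadef}), is where essentially all the effort lies; the invocation of Theorem~\ref{steepestdescentth} itself is then mechanical.
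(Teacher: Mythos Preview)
Your overall seven-step strategy matches the paper's, and the invocation of Theorem~\ref{steepestdescentth} at the end is correct. However, two of your intermediate constructions are wrong in ways that would make the argument fail.

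\textbf{The conjugation in Step~3 is on the wrong interval.} You define
\[
\delta(k)=\exp\Bigl(\tfrac{1}{2\pi i}\int_{-k_0}^{k_0}\tfrac{\ln(1-|r(s)|^2)}{s-k}\,ds\Bigr),
\]
so that $\delta$ has its jump on $(-k_0,k_0)$. But on $(-k_0,k_0)$ the original jump already admits the unipotent factorization $J=b_l^{-1}b_u$ with $b_u$ (carrying $e^{-t\Phi}$) deformable upward into $U_2=\{\re\Phi>0\}$ and $b_l$ downward; no conjugation is needed there. It is on $|k|>k_0$ that the naive unipotent factorization fails (the $(2,2)$ entry $1-|r|^2$ obstructs it), and the signature of $\re\Phi$ flips so that one needs $\tilde J=B_u^{-1}B_l$ with entries involving $r/(1-|r|^2)$. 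Producing that factorization requires $\delta$ to jump on $(-\infty,-k_0)\cup(k_0,\infty)$, as in the paper's
\[
\delta(\zeta,k)=\exp\Bigl(-\tfrac{1}{2\pi i}\bigl(\textstyle\int_{-\infty}^{-k_0}+\int_{k_0}^{\infty}\bigr)\ln(1-|r(s)|^2)\,\tfrac{ds}{s-k}\Bigr).
\]
With your $\delta$ the deformation in Step~5 cannot be carried out on $|k|>k_0$. This also changes the value of $q$: the paper does not take $q(\zeta)=r(k_0)$ but rather absorbs the boundary value of $\delta$ and a logarithmic scaling factor into $q$, so that no extra constant conjugation is needed.

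\textbf{The analytic approximation of $h$ in Step~1 cannot subtract $h_1/k+h_2/k^2+h_3/k^3$.} That rational function has its pole at $0\in\partial D_1$, so the resulting $h_a$ would fail to be continuous on $\bar D_1$ and the bound $|h_a|\le C(1+|k|)^{-1}e^{\frac t2|\re\Phi|}$ near $0$ would fail; the transformation $M\mapsto M^h$ would then not stay in $I+\dot E^2$. The paper avoids this by choosing a rational function $f_0(k)=\sum_{j=1}^{8}a_j/(k-i)^j$ with its poles at $i\in D_4$ (hence away from $\bar D_1$) that matches $h$ to fourth order at $0$ \emph{and} to third order at $\infty$. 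The remainder $f=h-f_0$ is then smooth enough at both endpoints that, after the bijection $\phi=8k^3:\partial D_1\to\R$, the function $(k-i)^2f(k)$ lies in $H^2(\R)$, and a Fourier cutoff at frequency $-t/2$ gives the splitting $h=h_a+h_r$ with $h_r=O(t^{-3/2})$ in $L^1\cap L^2\cap L^\infty(\partial D_1)$ and $|h_a|\le C(1+|k|)^{-1}e^{\frac t2|\re\Phi|}$ on $\bar D_1$. Your sketch does not produce this; the matching at $0$ (not just at $\infty$) is essential, and placing the poles at $i$ rather than $0$ is not a cosmetic choice.
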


\begin{remark}[Quarter plane solutions]\label{decayremark}\upshape
Theorem \ref{mainth1} holds for any functions $r(k)$  and $h(k)$ which satisfy the stated assumptions, regardless of whether these functions are defined via (\ref{hrdef}) or not. By assuming that $r(k)$  and $h(k)$ are defined in terms of $u_0(x)$ and $\{g_j(t)\}_0^2$ via (\ref{hrdef}) and by combining Theorem \ref{existenceth}  and Theorem \ref{mainth1}, we can ensure that the function $u(x,t)$ defined in (\ref{ulim}) constitutes a quarter plane solution of (\ref{mkdv}) with initial and boundary values given by $u_0$ and $\{g_j\}_0^2$. More precisely, we have the following corollary: Suppose $u_0, g_0, g_1, g_2$ satisfy (\ref{ugjassump}) with $n = 11$ and $m = 4$, that the associated spectral functions satisfy the global relation (\ref{GR}), and that the $L^2$-RH problem (\ref{RHM}) has a solution for each $(x,t) \in [0,\infty) \times [0,\infty)$. Then the function $u(x,t)$  defined by (\ref{ulim}) is a well-defined classical solution of (\ref{mkdv}) in the quarter plane with initial data $u(x,0) = u_0(x)$  and boundary values $\partial_x^ju(0,t) = g_j(t)$, $j = 0,1,2$. Moreover, the asymptotics of $u(x,t)$ in the similarity sector is given by (\ref{ufinal}). 
\end{remark}

\begin{remark}[Effect of the boundary]\label{boundaryremark}\upshape
According to equations (\ref{uadef})-(\ref{psidef}), the asymptotic wave shape $u_a(x,t)$ associated with a quarter plane solution $u(x,t)$ depends on the initial and boundary values via the function 
$$r(k) = \frac{\overline{b(\bar{k})}}{a(k)} - \frac{ \overline{B(\bar{k})}}{a(k) d(k)},$$ 
where $d(k) = a(k)\overline{A(\bar{k})} -  b(k) \overline{B(\bar{k})}$. The functions $\{a(k), b(k)\}$ are defined in terms of the initial data, whereas the functions $\{A(k), B(k)\}$ are defined in terms of the boundary values. If we considered equation (\ref{mkdv}) on the line (instead of the half-line), the solution would satisfy an asymptotic formula analogous to (\ref{ufinal}) but with $r(k)$ replaced with 
$$r^L(k) = \frac{\overline{b^L(\bar{k})}}{a^L(k)},$$ 
where $\{a^L(k), b^L(k)\}$ are the line analogs of the half-line spectral functions $\{a(k), b(k)\}$. It follows that the effect of the boundary on the leading-order asymptotics in the similarity sector is to add the term $- \overline{B(\bar{k})}/(a(k) d(k))$ to the value of $r(k)$.
\end{remark}

\medskip\noindent
{\it Proof of Theorem \ref{mainth1}.}
Let $N > 1$ be given and let $\mathcal{I}$ denote the interval $\mathcal{I} = (0, N]$. 
The jump matrix $J$ defined in (\ref{Jdef}) involves the exponentials $e^{\pm t\Phi(\zeta,k)}$, where  $\Phi(\zeta, k)$ is defined by
$$\Phi(\zeta, k) = 8ik^3 - 2ik \zeta, \qquad \zeta \in \mathcal{I}, \quad k \in \C.$$
It follows that there are two stationary points located at the points where $\frac{d\Phi}{dk} = 0$, i.e. at $k = \pm k_0$. The real part of $\Phi$ is shown in Figure \ref{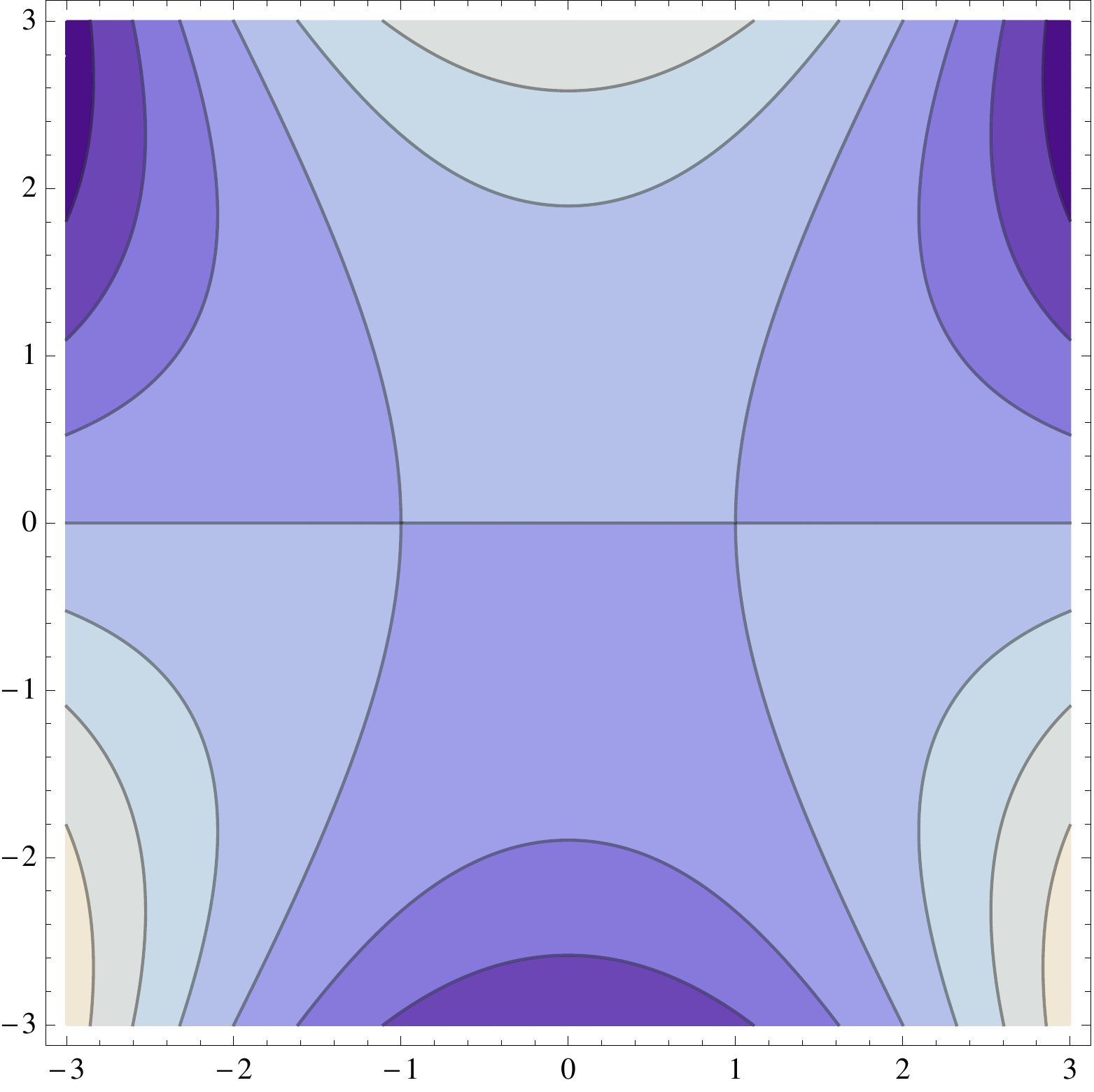}. 

\medskip
{\bf Step 1: Introduce an analytic approximation of $h(k)$.}
The first step of the proof consists of decomposing $h$ into an analytic part $h_a$ and a small remainder $h_r$.

\begin{lemma}\label{decompositionlemma}
There exists a decomposition
\begin{align*}
h(k) = h_{a}(t, k) + h_{r}(t, k), \qquad t > 0, \quad k \in \partial D_1,  
\end{align*}
where the functions $h_{a}$ and $h_{r}$ have the following properties:
\begin{enumerate}[$(a)$]
\item For each $t > 0$, $h_{a}(t, k)$ is defined and continuous for $k \in \bar{D}_1$ and analytic for $k \in D_1$.

\item For each $\zeta \in \mathcal{I}$ and each $t > 0$, 
\begin{align*}
|h_{a}(t, k)| \leq 
\frac{C}{1 + |k|} e^{\frac{t}{2} |\re \Phi(\zeta,k )|},  \qquad k \in \bar{D}_1,
\end{align*}
where the constant $C$ is independent of $\zeta, t, k$.

\item The $L^1, L^2$, and $L^\infty$ norms of the function $h_{r}(t, \cdot)$ on $\partial D_1$ are $O(t^{-3/2})$ as $t \to \infty$.

\item The following symmetries are valid:
\begin{align}\label{hsymmetries}
h_{a}(t, k) = \overline{h_{a}(t, -\bar{k})}, \quad
h_{r}(t, k) = \overline{h_{r}(t, -\bar{k})}.
\end{align}
\end{enumerate}
\end{lemma}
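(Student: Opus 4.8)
The plan is to follow the classical Deift--Zhou analytic approximation scheme, but with the twist (mentioned in the remark) that $h(k)$ only decays like $1/k$, so the decomposition must be built carefully to retain the $O(t^{-3/2})$ bound on the remainder. First I would work on the ray $\partial D_1 = \R e^{i\pi/3}$ and split $h$ according to a point on the real axis related to the stationary point $k_0$. Concretely, since $h \in C^5(\bar D_2)$ and, by \eqref{hjkexpansion} with $m=3$, $h(k) - \big(h_1/k + h_2/k^2 + h_3/k^3\big)$ decays like $k^{-4}$, I would first subtract off the explicit rational tail $h_0(k) := h_1/k + h_2/k^2 + h_3/k^3$, which is already analytic in $D_1$ (no poles there since the poles are at $k=0 \notin D_1$) and satisfies the bound in $(b)$ trivially because $|h_0(k)| \le C/(1+|k|)$ and $e^{\frac{t}{2}|\re\Phi|} \ge 1$. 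It remains to treat $h - h_0 \in C^5$ with decay $O(k^{-4})$ together with four derivatives; call this function $\tilde h$.

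Next, for the compactly-relevant part, I would introduce a splitting point on $\partial D_1$ — say the point $k_0 e^{i\pi/3}$ or more simply use the stationary-phase structure of $\Phi$ along $\partial D_1$ — and write $\tilde h(k) = P(k) + f(k)$, where $P$ is the degree-$4$ Taylor polynomial of $\tilde h$ at that splitting point (so that $f$ vanishes to order $5$ there) and then decompose further. Following the standard construction, one writes $P(k)\,e^{t\Phi/2}$-type pieces as contour integrals that extend analytically into $D_1$ along the direction in which $\re\Phi < 0$, giving $h_a$, while the high-order-vanishing remainder $f$ is estimated directly: because $f$ vanishes to order $5$ at the splitting point and $\tilde h \in C^5$ with $\tilde h^{(j)} = O(k^{-4+2j})$ (inherited from \eqref{hjkexpansion}), an integration-by-parts / stationary-phase argument against $e^{\pm t\Phi}$ along $\partial D_1$ produces the $O(t^{-3/2})$ bound in $L^1 \cap L^2 \cap L^\infty$. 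The exponent $3/2$ is exactly what five orders of smoothness buys: one gets $t^{-N/2}$ decay roughly from $N$ orders of regularity combined with one power from the quadratic stationary point, and here $N = 3$ effective orders after the subtractions suffice. This is the step I expect to be the main obstacle: tracking the precise number of derivatives needed, making sure the analytic continuation of the polynomial part obeys the pointwise bound $(b)$ with the factor $e^{\frac{t}{2}|\re\Phi(\zeta,k)|}$ uniformly in $\zeta \in \mathcal{I}$ (this uses \eqref{rephiestimatea}--type sign information for $\re\Phi$ on $\partial D_1$ and the fact that $\mathcal{I}=(0,N]$ is bounded so $k_0$ stays bounded), and keeping all estimates uniform in $\zeta$.

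Finally, the symmetry $(d)$ I would obtain for free by symmetrizing the construction: since $h(k) = \overline{h(-\bar k)}$ and the contour $\partial D_1$ together with $\Phi$ respect the map $k \mapsto -\bar k$ (note $\Phi(\zeta, -\bar k) = \overline{\Phi(\zeta,k)}$), one simply replaces any intermediate $h_a$ by $\tfrac12\big(h_a(t,k) + \overline{h_a(t,-\bar k)}\big)$ and likewise for $h_r$; this preserves analyticity in $D_1$ (which is invariant under $k \mapsto -\bar k$), preserves the bounds in $(b)$ and $(c)$ since they are themselves symmetric under $k \mapsto -\bar k$, and forces \eqref{hsymmetries}. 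Assembling the pieces, $h_a := h_0 + (\text{analytic continuation of polynomial part})$ and $h_r := f + (\text{symmetrization corrections})$ gives the claimed decomposition. The details are routine adaptations of the analytic-approximation lemmas in \cite{DZ1993, Lnonlinearsteepest}, so I would present the construction and then verify $(a)$--$(d)$ in turn, spending the most care on the $O(t^{-3/2})$ estimate in $(c)$.
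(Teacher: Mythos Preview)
Your proposal has a concrete error and a structural mismatch with what the lemma actually demands.

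First, the rational tail $h_0(k) = h_1/k + h_2/k^2 + h_3/k^3$ has a pole at $k=0$, and $0$ lies on $\partial D_1 \subset \bar D_1$ (the two rays $\arg k = -\pi/3$ and $\arg k = -2\pi/3$ meet there). Thus $h_0$ is neither continuous on $\bar D_1$ nor bounded by $C/(1+|k|)$ near the origin; your claim that $|h_0(k)| \le C/(1+|k|)$ is false for small $|k|$. Putting $h_0$ into $h_a$ therefore destroys both $(a)$ and $(b)$. The paper fixes this by subtracting instead a rational function $f_0(k) = \sum_{j=1}^8 a_j (k-i)^{-j}$, whose only pole $k=i$ sits safely outside $\bar D_1 \subset \{\im k \le 0\}$, with the eight coefficients $a_j$ determined by matching $h$ to fourth order at $k=0$ \emph{and} to third order at $k=\infty$ simultaneously. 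This double matching is what makes $f = h - f_0$ vanish fast enough at $0$ and decay fast enough at $\infty$ for the next step.

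Second, the lemma requires $h_a$ and $h_r$ to depend only on $(t,k)$; the bound in $(b)$ must then hold for \emph{every} $\zeta \in \mathcal{I}$. Your Deift--Zhou splitting at the $\zeta$-dependent point $k_0 e^{i\pi/3}$ would produce a $\zeta$-dependent decomposition, contrary to the statement. The paper avoids this entirely and does not use a Taylor-polynomial splitting at all: it changes variables via the bijection $\phi = 8k^3 \colon \partial D_1 \to \R$, sets $F(\phi) = (k-i)^2 f(k)$, shows $F \in H^2(\R)$ using the matching at $0$ and $\infty$, and splits the Fourier representation $F(\phi) = \int_{\R} \hat F(s) e^{i\phi s}\,ds$ at $s = -t/2$. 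The piece with $s \ge -t/2$ gives $f_a$, analytic in $D_1$ with $|f_a| \le C|k-i|^{-2} e^{(t/2)|\re 8ik^3|} \le C|k-i|^{-2} e^{(t/2)|\re\Phi(\zeta,k)|}$ there; the last inequality holds for all $\zeta$, which is exactly why $(b)$ comes out $\zeta$-uniform. The piece with $s < -t/2$ gives $f_r$, and $\|s^2\hat F\|_{L^2} < \infty$ yields the $O(t^{-3/2})$ in $(c)$. Symmetry $(d)$ follows because $\hat F$ is real-valued, not by an a posteriori symmetrization.

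So the overall architecture you sketched---subtract an explicit analytic piece, then approximate the rest---is correct in spirit, but both building blocks you chose (the rational tail with pole at $0$ and the Taylor-at-$k_0$ splitting) fail here for the reasons above.
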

\begin{figure}
\begin{center}
\bigskip
  \begin{overpic}[width=.6\textwidth]{rePhi.pdf}
  \put(48,-4){$\re k$}
   \put(-8,51){$\im k$}
   \put(63,47.5){$k_0$}
   \put(35,47.5){$-k_0$}
   \put(14,42){$\re \Phi > 0$}
   \put(43,67){$\re \Phi > 0$}
   \put(72,42){$\re \Phi > 0$}
   \put(14,58){$\re \Phi < 0$}
   \put(43,35){$\re \Phi < 0$}
   \put(72,58){$\re \Phi < 0$}
     \end{overpic}
   \bigskip
   \begin{figuretext}\label{rePhi.pdf}
      Contour plot of $\re \Phi(\zeta, k)$ for $k_0 = 1$. Light (dark) regions correspond to positive (negative) values of $\re \Phi$. 
            \end{figuretext}
   \end{center}
\end{figure}
\proofbegin
Since $h \in C^5(\bar{D}_2)$, there exist complex constants $\{p_j\}_0^4$ such that
\begin{align*}
& h^{(n)}(k) = \frac{d^n}{dk^n}\bigg(\sum_{j=0}^4 p_j k^j\bigg) + O(k^{5-n}), \qquad k \to 0, \quad k \in \partial D_1, \quad n = 0,1,2.
\end{align*}
In fact, $p_j = \frac{h^{(j)}(0)}{j!}$ for $j = 0, 1, \dots, 4$.
Let
$$f_0(k) = \sum_{j=1}^8 \frac{a_j}{(k - i)^j},$$
where $\{a_j\}_1^8$ are complex constants such that
\begin{align}\label{linearconditions}
f_0(k) = 
\begin{cases}
\sum_{j=0}^4 p_j k^j + O(k^5), & k \to 0,
	\\
\sum_{j=1}^3 h_j k^{-j} + O(k^{-4}), & k \to \infty.
\end{cases}
\end{align}
It is easy to verify that (\ref{linearconditions}) imposes eight linear conditions on the $a_j$'s that are linearly independent; hence the coefficients $a_j$ exist and are unique. The symmetry $h(k) = \overline{h(-\bar{k})}$ implies that $a_j = (-1)^j \bar{a}_j$, $j = 1, \dots, 8$. Hence $f_0(k) = \overline{f_0(-\bar{k})}$. Letting $f = h - f_0$, it follows that:
\begin{enumerate}[$(i)$]
\item $f_0(k)$ is a rational function of $k \in \C$ with no poles in $\bar{D}_1$.

\item $f_0(k)$ coincides with $h(k)$ to order four at $0$ and to order three at $\infty$; more precisely,
\begin{align}\label{hcoincide}
 \frac{\partial^n f}{\partial k^n} (k) =
\begin{cases}
 O(k^{5 - n}), & k \to 0, 
	\\
O(k^{-4+2n}), & k \to \infty, 
 \end{cases}
 \qquad  k \in \partial D_1, \quad n = 0,1,2.
\end{align}

\item $f(k) = \overline{f(-\bar{k})}$ for $k \in \C$.
\end{enumerate}

The decomposition of $h(k)$ can now be derived as follows.
The map $k \mapsto \phi = \phi(k)$ defined by
$$\phi(k) = 8 k^3$$
is a bijection  $\partial D_1 \to \R$, so we may define a function $F:\R \to \C$ by
\begin{align}\label{Fphi}
F(\phi) = (k - i)^2 f(k), \qquad \zeta \in \mathcal{I}, \quad \phi \in \R.
\end{align}
The function $F(\phi)$ is $C^5$ for $\phi \neq 0$ and 
$$F^{(j)}(\phi) = \bigg(\frac{1}{24 k^2} \frac{\partial }{\partial k}\bigg)^j [(k-i)^2f(k)], \qquad \phi \in \R \setminus \{0\}.$$ 
By (\ref{hcoincide}), $F \in C^1(\R)$ and $F''(\phi) = O(|\phi|^{-1/3})$ as $\phi \to 0$, while $F^{(j)}(\phi) = O(|\phi|^{-2/3})$ as $|\phi| \to \infty$ for $j = 0,1,2$.
Hence
\begin{align*}
\bigg\|\frac{d^jF}{d \phi^j}\bigg\|_{L^2(\R)} < \infty, \qquad j = 0,1,2,
\end{align*}
that is, $F$ belongs to the Sobolev space $H^2(\R)$.
It follows that the Fourier transform $\hat{F}(s)$ defined by
$$\hat{F}(s) = \frac{1}{2\pi} \int_{\R} F(\phi) e^{-i\phi s} d\phi,$$
satisfies
\begin{align}\label{FphihatF}
F(\phi) =  \int_{\R} \hat{F}(s) e^{i\phi s} ds,
\end{align}
and
$$\|s^2 \hat{F}(s)\|_{L^2(\R)} < \infty.$$
Equations (\ref{Fphi}) and (\ref{FphihatF}) imply
$$ \frac{1}{(k-i)^2}\int_{\R} \hat{F}(s) e^{8isk^3} ds 
= f(k), \qquad k \in \partial D_1.$$
Writing
$$f(k) = f_a(t, k) + f_r(t, k), \qquad t > 0, \quad k \in \partial D_1,$$
where the functions $f_a$ and $f_r$ are defined by
\begin{align*}
& f_a(t,k) = \frac{1}{(k-i)^2}\int_{-\frac{t}{2}}^\infty \hat{F}(s) e^{8isk^3} ds, \qquad t > 0, \quad k \in \bar{D}_1,  
	\\
& f_r(t,k) = \frac{1}{(k-i)^2}\int_{-\infty}^{-\frac{t}{2}} \hat{F}(s) e^{8isk^3} ds,\qquad t > 0, \quad  k \in \partial D_1,
\end{align*}
we infer that $f_a(t, \cdot)$ is continuous in $\bar{D}_1$ and analytic in $D_1$. 
Furthermore, since $|\re 8ik^3| \leq |\re \Phi(\zeta, k)|$ for all $k \in \bar{D}_1$ and $\zeta \in \mathcal{I}$, we find
\begin{align*}\nonumber
 |f_a(t, k)| 
&\leq \frac{1}{|k-i|^2}\|\hat{F}\|_{L^1(\R)}  \sup_{s \geq -\frac{t}{2}} e^{s \re 8ik^3}
\leq \frac{C}{|k-i|^2} e^{\frac{t}{2} |\re 8ik^3|} 
	\\ 
 &\leq \frac{C}{|k-i|^2} e^{\frac{t}{2} |\re \Phi(\zeta, k)|}, \qquad \zeta \in \mathcal{I}, \quad t > 0, \quad k \in \bar{D}_1,
\end{align*}
and
\begin{align*}\nonumber
|f_r(t, k)| & \leq \frac{1}{|k-i|^2} \int_{-\infty}^{-\frac{t}{2}} s^2 |\hat{F}(s)| s^{-2} ds
 \leq \frac{C}{|k-i|^2}  \| s^2 \hat{F}(s)\|_{L^2(\R)} \sqrt{\int_{-\infty}^{-\frac{t}{2}} s^{-4} ds}  
 	\\ 
&  \leq \frac{C}{1 + |k|^2}  t^{-3/2}, \qquad \zeta \in \mathcal{I}, \quad t > 0, \quad k \in \partial D_1.
\end{align*}
Hence the $L^1$, $L^2$, and $L^\infty$ norms of $f_r$ on $\partial D_1$ are $O(t^{-3/2})$. The symmetry $f(k) = \overline{f(-\bar{k})}$ implies $F(\phi) = \overline{F(-\phi)}$. Thus the Fourier transform $\hat{F}(s)$ is real valued, which leads to the symmetries in (\ref{hsymmetries}).
Letting
\begin{align*}
& h_{a}(t, k) = f_0(k) + f_a(t, k), \qquad t > 0, \quad k \in \bar{D}_1,
	\\
& h_{r}(t, k) = f_r(t, k), \qquad t > 0, \quad k \in \partial D_1,
\end{align*}
we find a decomposition of $h$ with the properties listed in the statement of the lemma. 

\proofendcontinue

\medskip
{\bf Step 2: Deform.}
Lemma \ref{decompositionlemma} implies
$$h_{a}(t, \cdot) e^{t\Phi(\zeta, \cdot)} \in \dot{E}^2(D_1) \cap E^\infty(D_1)$$
for each $\zeta \in \mathcal{I}$ and each $t > 0$, where $E^\infty(D_1)$ denotes the space of bounded analytic functions in $D_1$. 
Thus, $M$ satisfies the $L^2$-RH problem (\ref{RHM}) iff the function $M^h(x,t,k)$ defined by
\begin{align}\label{checkMdef}
M^h(x,t,k) = \begin{cases}  
M(x,t,k)  \begin{pmatrix} 1 & 0 \\ -h_{a}(t, k) e^{t\Phi(\zeta, k)} & 1 \end{pmatrix}, & k \in D_1, \\
M(x,t,k)  \begin{pmatrix} 1 & -\overline{h_{a}(t, \bar{k})} e^{-t\Phi(\zeta, k)} \\ 0 & 1 \end{pmatrix}, & k \in D_4, \\
M(\zeta, t,k), & \text{otherwise},
\end{cases}
\end{align}
satisfies the $L^2$-RH problem
\begin{align}\label{RHMcheck}
\begin{cases}
M^h(x, t, \cdot) \in I + \dot{E}^2(\C \setminus \Sigma),\\
M^h_+(x,t,k) = M^h_-(x, t, k) J^h(x, t, k) \quad \text{for a.e.} \ k \in \Sigma,
\end{cases}
\end{align}
where 
\begin{align}\label{hatJdef}
&J^h(x,t,k) = \begin{cases} 
 \begin{pmatrix} 1 & 0 \\ h_{r}(t, k) e^{t\Phi(\zeta, k)} & 1 \end{pmatrix}, & k \in \partial D_1,
	\\
 \begin{pmatrix} 1 & - \overline{r(\bar{k})} e^{-t\Phi(\zeta, k)} \\
r(k) e^{t\Phi(\zeta, k)} & 1 - |r(k)|^2 \end{pmatrix},& k \in \R,
	\\ 
 \begin{pmatrix} 1 & \overline{h_{r}(t, \bar{k})} e^{-t\Phi(\zeta, k)} \\ 0 & 1 \end{pmatrix}, 	& k \in \partial D_4.
\end{cases}
\end{align}
Part $(c)$ of Lemma \ref{decompositionlemma} implies that the $L^1, L^2$, and $L^\infty$ norms of $J^h(x, t, \cdot) - I$ are $O(t^{-3/2})$ on $\partial D_1 \cup \partial D_4$.

\medskip
{\bf Step 3: Conjugate.}
In order to apply the steepest descent result of Theorem \ref{steepestdescentth}, we need to transform the RH problem in such a way that the jump matrix has decay everywhere as $t \to \infty$ except near the two stationary points. This can be achieved by performing an appropriate triangular factorization of the jump matrix followed by a contour deformation \cite{DZ1993}. For $k \in (-k_0,k_0)$, it is easy to achieve an appropriate factorization. We next show that an appropriate factorization can be achieved also for $k \in (-\infty, -k_0) \cup (k_0, \infty)$ by conjugating the RH problem (\ref{RHMcheck}).

Let
$$\Delta(\zeta, k) = \begin{pmatrix} \delta(\zeta, k)^{-1} & 0 \\  0 & \delta(\zeta, k)   \end{pmatrix},$$
where
\begin{align}\label{deltadef}
 \delta(\zeta, k) = e^{-\frac{1}{2\pi i} (\int_{-\infty}^{-k_0} + \int_{k_0}^\infty) \ln(1- |r(s)|^2) \frac{ds}{s - k}}, \qquad  k \in \C \setminus \R.
\end{align}
The function $\delta$ satisfies the following jump condition across the real axis:
\begin{align}\label{deltajump}
 \delta_+(\zeta, k) = \begin{cases} 
\frac{\delta_-(\zeta, k)}{1 - |r(k)|^2}, & |k| > k_0, \\
\delta_-(\zeta, k), & |k| < k_0, 
\end{cases} \qquad k \in \R.
\end{align}
Moreover, the symmetry $r(k) = \overline{r(-\bar{k})}$ implies that
\begin{align}\label{chideltasymm}
\delta(\zeta, k) = \overline{\delta(\zeta, -\bar{k})} =  \delta(\zeta, -k)^{-1}.
\end{align}
It follows that $\Delta$  obeys the symmetries
\begin{align}\label{Deltasymm}
  \Delta(\zeta, k) = \overline{\Delta(\zeta, -\bar{k})} =  \Delta(\zeta, -k)^{-1}.
\end{align}

\begin{lemma}\label{Deltalemma}
The $2 \times 2$-matrix valued function $\Delta(\zeta, k)$ satisfies
$$\Delta(\zeta, \cdot), \Delta(\zeta, \cdot)^{-1} \in I + \dot{E}^2(\C \setminus \R) \cap E^\infty(\C \setminus \R),$$
for each $\zeta \in \mathcal{I}$.
\end{lemma}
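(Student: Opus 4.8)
The plan is to reduce the statement to two scalar estimates on $\delta(\zeta,k)$ and its logarithm. First I would introduce $\chi(s):=\ln(1-|r(s)|^2)$ for $|s|>k_0$ and $\chi(s):=0$ for $|s|\leq k_0$, so that $\ln\delta(\zeta,k)=-\frac{1}{2\pi i}\int_{\R}\frac{\chi(s)}{s-k}\,ds$ for $k\in\C\setminus\R$. The hypotheses $\sup_{k\in\R}|r(k)|<1$ and $r(k)=O(k^{-4})$ as $|k|\to\infty$ force $\chi$ to be bounded, nonpositive, and $O(s^{-8})$ at infinity, hence $\chi\in L^1(\R)\cap L^2(\R)\cap L^\infty(\R)$; in particular $\delta(\zeta,\cdot)$ is analytic on $\C\setminus\R$ (indeed across $(-k_0,k_0)$ as well). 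Since $\Delta-I=\diag(\delta^{-1}-1,\,\delta-1)$ and $\Delta^{-1}-I=\diag(\delta-1,\,\delta^{-1}-1)$, it suffices to prove $\delta^{\pm1}-1\in\dot{E}^2(\C\setminus\R)\cap E^\infty(\C\setminus\R)$.

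For the $E^\infty$ bound I would write $k=\xi+i\eta$ with $\eta\neq0$ and use $\re\ln\delta(\zeta,k)=-\frac{\eta}{2\pi}\int_{\R}\frac{\chi(s)}{(s-\xi)^2+\eta^2}\,ds$. Since $\chi\leq0$ and $\int_{\R}\frac{ds}{(s-\xi)^2+\eta^2}=\pi/|\eta|$, this gives $|\re\ln\delta(\zeta,k)|\leq\tfrac12\|\chi\|_{L^\infty(\R)}$ for all $k\in\C\setminus\R$, so that $e^{-\frac12\|\chi\|_{L^\infty}}\leq|\delta(\zeta,k)|\leq e^{\frac12\|\chi\|_{L^\infty}}$, and therefore $\delta,\delta^{-1}\in E^\infty(\C\setminus\R)$.

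For the $\dot{E}^2$ membership I would observe that $\ln\delta$ is, up to a sign, the Cauchy transform of $\chi\in L^2(\R)$, so by the standard $L^2$-mapping property of the Cauchy integral its restrictions to the horizontal lines $\{\im k=\eta\}$ satisfy $\|\ln\delta(\zeta,\cdot+i\eta)\|_{L^2(\R)}\leq\|\chi\|_{L^2(\R)}$ uniformly in $\eta\neq0$. Combining the elementary inequality $|e^w-1|\leq|w|e^{|\re w|}$ with the uniform bound on $\re\ln\delta$ from the previous step gives $|\delta^{\pm1}(\zeta,k)-1|\leq e^{\frac12\|\chi\|_{L^\infty}}\,|\ln\delta(\zeta,k)|$ pointwise on $\C\setminus\R$. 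I would then take, in each of the two half-planes, exhausting contours $C_n$ equal to the boundaries of the rectangles $\{|\re k|\leq n,\ 1/n\leq\pm\im k\leq n\}$, which eventually surround every compact subset of that half-plane: on the horizontal sides the $L^2$-norm of $\delta^{\pm1}-1$ is controlled by the uniform-in-$\eta$ bound just obtained, while on the vertical sides $\{\re k=\pm n\}$, splitting the defining integral for $\ln\delta$ at $|s|=n/2$ and using $\chi\in L^1(\R)$ with $\chi(s)=O(s^{-8})$ yields $|\ln\delta|\lesssim 1/n$ there, so that the $L^2$-norm of $\delta^{\pm1}-1$ over those sides (which have length $\leq n$) tends to $0$ as $n\to\infty$. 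Hence $\sup_n\|\delta^{\pm1}-1\|_{L^2(C_n)}<\infty$, so $\delta^{\pm1}-1\in\dot{E}^2(\C\setminus\R)$, and the lemma follows.

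The one place where care is needed is the behaviour of $\delta$ near the endpoints $\pm k_0$, where $\ln(1-|r(s)|^2)$ has jump discontinuities and the argument of $\delta$ may blow up logarithmically. This is harmless here because both the $E^\infty$ estimate and, via $|\delta^{\pm1}-1|\lesssim|\ln\delta|$, the $\dot{E}^2$ estimate require only control of $|\delta|=e^{\re\ln\delta}$, and $\re\ln\delta$ is a bounded, sign-definite Poisson-type integral; no cancellation in the argument of $\delta$ is used.
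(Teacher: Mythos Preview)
Your argument is correct, and it follows a genuinely different route from the paper's. The paper first factorizes $\delta(\zeta,k)=\bigl(\tfrac{k-k_0}{k+k_0}\bigr)^{i\nu}e^{\chi(\zeta,k)}$, where $\chi$ is the Cauchy transform of the \emph{continuous} function $\psi(\zeta,\cdot)\in H^1(\R)$ (equal to $\ln(1-|r(k_0)|^2)$ on $(-k_0,k_0)$ rather than $0$); the $E^\infty$ bound on $\chi$ is then pulled from Lemma~23.3 of \cite{BDT1988}, and the $\dot{E}^2$ membership is checked on conformal images $C_n$ of circles $|w|=1-\tfrac1n$ in the disk, with the estimate split as $(\tfrac{k-k_0}{k+k_0})^{i\nu}e^\chi-e^\chi$ plus $e^\chi-1$. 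By contrast, you work directly with $\ln\delta$ as the Cauchy transform of a discontinuous density in $L^1\cap L^2\cap L^\infty$, obtain the $E^\infty$ bound from the Poisson representation and the sign of $\ln(1-|r|^2)$, and verify $\dot{E}^2$ via rectangular exhaustion, treating horizontal and vertical sides separately. Your approach is more self-contained (no external citation, no $H^1$ regularity needed), but note that the paper's factorization is not gratuitous: the representation $\delta=(\tfrac{k-k_0}{k+k_0})^{i\nu}e^\chi$ with $\psi\in H^1$ is reused in Step~6 (see (\ref{deltarep})--(\ref{chibound}) and the proof of (\ref{term1})), so you would still need it later.
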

\proofbegin
First note that
\begin{align}\label{deltarep}
\delta(\zeta, k) = \bigg(\frac{k-k_0}{k + k_0}\bigg)^{i\nu} e^{\chi(\zeta, k)},
\end{align}
where $\nu(\zeta)$ is given by (\ref{nudef}),
\begin{align}\label{chidef}
\chi(\zeta, k) = - \frac{1}{2\pi i}\int_\R \psi(\zeta, s) \frac{ds}{s - k},
\end{align}
and the function $\psi(\zeta,s)$ is defined by (\ref{psidef}).
Since $\psi(\zeta, \cdot) \in H^1(\R)$, we have $\chi(\zeta, \cdot) \in E^\infty(\C \setminus \R)$ for each $\zeta \in \mathcal{I}$; in fact, see Lemma 23.3 in \cite{BDT1988},
\begin{align}\label{chibound}
\sup_{\zeta \in \mathcal{I}} \sup_{z \in \C \setminus \R} |\chi(\zeta, z)| \leq \sup_{\zeta \in \mathcal{I}} \|\psi(\zeta, \cdot)\|_{H^1(\R)} < \infty.
\end{align}
Hence $\delta(\zeta, \cdot), \delta(\zeta, \cdot)^{-1} \in E^\infty(\C \setminus \R)$ for each $\zeta \in \mathcal{I}$.

It remains to show that $\delta(\zeta, \cdot), \delta(\zeta, \cdot)^{-1} \in 1 + \dot{E}^2(\C \setminus \R)$.
Since $\psi(\zeta, \cdot) \in L^2(\R)$, $\chi(\zeta, \cdot) \in \dot{E}^2(\C \setminus \R)$ for each $\zeta \in \mathcal{I}$.
Let $C_n$ be the image of the circle  $|w| = 1 - \frac{1}{n}$ under the linear fractional transformation $z \mapsto - i\frac{z + i}{z - i}$, which maps the unit disk onto the upper half plane. 
Then $\sup_n \|\chi(\zeta, \cdot)\|_{L^2(C_n)} < \infty$. Using the inequality \begin{align}\label{ewminus1estimate}  
  |e^w - 1| \leq |w| \max(1, e^{\re w}), \qquad w \in \C,
\end{align}
we obtain
\begin{align*}
\sup_n \|\delta(\zeta, \cdot) - 1\|_{L^2(C_n)} 
& \leq \sup_n\bigg\|\bigg(\frac{k-k_0}{k + k_0}\bigg)^{i\nu} e^{\chi} - e^{\chi}\bigg\|_{L^2(C_n)} 
+  \sup_n \|e^{\chi} - 1\|_{L^2(C_n)} 
	\\
& \leq C\sup_n \|e^{\chi}\|_{L^\infty(C_n)} 
+ \sup_n \| \chi e^{|\re \chi|} \|_{L^2(C_n)}  < \infty.
\end{align*}
It follows that 
$\delta(\zeta, \cdot) \in 1 + \dot{E}^2(\C \setminus \R)$. A similar argument gives $\delta(\zeta, \cdot)^{-1} \in 1 + \dot{E}^2(\C \setminus \R)$.
\proofendcontinue

By Lemma \ref{Deltalemma}, $M^h(x,t,k)$ satisfies the $L^2$-RH problem (\ref{RHMcheck}) iff the function $\tilde{M}$ defined by
$$\tilde{M}(x,t,k) = M^h(x,t,k)\Delta(\zeta, k)$$
satisfies the $L^2$-RH problem
\begin{align}\label{RHMtilde}
\begin{cases}
\tilde{M}(x, t, \cdot)  \in I + \dot{E}^2(\C \setminus \Sigma),\\
\tilde{M}_+(x,t,k) = \tilde{M}_-(x, t, k) \tilde{J}(x, t, k)  \quad \text{for a.e.} \ k \in \Sigma,
\end{cases}
\end{align}
where
\begin{align} \nonumber
\tilde{J}&(x,t,k) = \Delta_-^{-1}(\zeta, k) J^h(x,t,k) \Delta_+(\zeta, k)
	\\ \label{tildeJdef}
& =\begin{cases}
 \begin{pmatrix} 1 & 0 \\ \delta(\zeta, k)^{-2} h_{r}(t, k) e^{t\Phi(\zeta,k)} & 1 \end{pmatrix}, \qquad k \in \partial D_1,
 	\\
 \begin{pmatrix} \frac{\delta_-(\zeta, k)}{\delta_+(\zeta, k)}	& -\delta_-(\zeta, k)\delta_+(\zeta, k)  \overline{r(\bar{k})}e^{-t\Phi(\zeta,k)}	\\
  \frac{1}{\delta_-(\zeta, k)\delta_+(\zeta, k)} r(k) e^{t\Phi(\zeta,k)}	& \frac{\delta_+(\zeta, k)}{\delta_-(\zeta, k)}(1 - |r(k)|^2) \end{pmatrix},  \qquad k \in \R,
	\\
\begin{pmatrix} 1 & \delta(\zeta, k)^2 \overline{h_{r}(t,\bar{k})} e^{-t\Phi(\zeta,k)} \\ 0 & 1 \end{pmatrix}, \qquad k \in \partial D_4.
  \end{cases}
\end{align}
In view of the jump (\ref{deltajump}) of $\delta(\zeta, k)$, this gives, for $k \in \R$,
\begin{align*}
\tilde{J}(x,t,k) = 
\begin{cases}
 \begin{pmatrix} 1 - |r(k)|^2	& - \delta_-(\zeta, k)^2 \frac{\overline{r(\bar{k})}}{1 - |r(k)|^2} e^{-t\Phi(\zeta,k)}	\\
 \delta_+(\zeta, k)^{-2} \frac{r(k)}{1 - |r(k)|^2} e^{t\Phi(\zeta,k)}	& 1 \end{pmatrix}, 	& |k| > k_0,	\\
 \begin{pmatrix} 1 & -\delta(\zeta, k)^2\overline{r(\bar{k})} e^{-t\Phi(\zeta,k)}	\\
    \delta(\zeta, k)^{-2} r(k) e^{t\Phi(\zeta,k)}	& 1 - |r(k)|^2 \end{pmatrix},   & |k| < k_0. 
\end{cases}
\end{align*}

The upshot of the above conjugation is that we can now factorize the jump matrix as follows:
\begin{align}\label{tildeJonR}
  \tilde{J} = 
\begin{cases}
 B_u^{-1} B_l,   & |k| > k_0, \quad k \in \R, \\
 b_l^{-1}b_u 	& |k| < k_0,  \quad k \in \R, 
\end{cases}
\end{align}
where
\begin{align}\nonumber
& B_l =  \begin{pmatrix} 1 & 0	\\
  \delta_+(\zeta, k)^{-2} r_1(k) e^{t\Phi(\zeta,k)}	& 1 \end{pmatrix}, 
	 \qquad
b_u = \begin{pmatrix} 1 & -\delta(\zeta, k)^2 r_2(k) e^{-t\Phi(\zeta,k)}	\\
0 & 1  \end{pmatrix}, 
	\\ \label{Bbdef}
&b_l = \begin{pmatrix} 1 &0 \\
 -  \delta(\zeta, k)^{-2} r_3(k) e^{t\Phi(\zeta,k)} & 1 \end{pmatrix}, 
 	\qquad
B_u =  \begin{pmatrix} 1	& \delta_-(\zeta, k)^2 r_4(k) e^{-t\Phi(\zeta,k)}	\\
0	& 1 \end{pmatrix},
\end{align}
and the functions $\{r_j(k)\}_1^4$ are defined by
\begin{align*}
& r_1(k) = \frac{r(k)}{1 - r(k)\overline{r(\bar{k})}}, \qquad
r_2(k) = \overline{r(\bar{k})},
	\\
& r_3(k) = r(k), \qquad
r_4(k) = \frac{\overline{r(\bar{k})}}{1 - r(k)\overline{r(\bar{k})}}.
\end{align*}
Our next goal is to deform the contour near $\R$. However, we first need to introduce analytic approximations of $\{r_j(k)\}_1^4$.

\medskip
{\bf Step 4: Introduce analytic approximations of $\{r_j(k)\}_1^4$.}
We introduce open sets $U_j = U_j(\zeta)$, $j = 1, \dots, 4$, as in Figure \ref{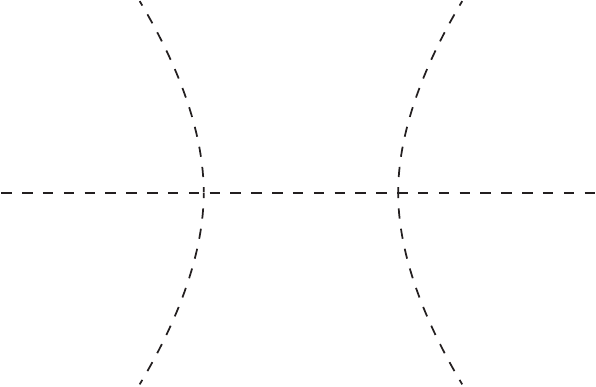} so that 
\begin{align}\label{Ujdef}
\{k \, | \, \re \Phi(\zeta, k) < 0\} = U_1 \cup U_3, \qquad \{k \, | \, \re \Phi(\zeta, k) > 0\} = U_2 \cup U_4.
\end{align}
\begin{figure}
\begin{center}
\bigskip
 \begin{overpic}[width=.55\textwidth]{Omegas2.pdf}
 \put(85,46){$U_1$}
 \put(48,50){$U_2$}
 \put(48,12){$U_3$}
 \put(85,16){$U_4$}
 \put(10,46){$U_1$}
  \put(10,16){$U_4$}
 \put(24.5,27.5){$-k_0$}
 \put(69,27.5){$k_0$}
 \end{overpic}
   \begin{figuretext}\label{Omegas2.pdf}
      The domains $\{U_j\}_1^4$ in the complex $k$-plane. The dashed curves are the curves on which $\re \Phi = 0$.
      \end{figuretext}
   \end{center}
\end{figure}

\begin{figure}
\begin{center}
 \begin{overpic}[width=.65\textwidth]{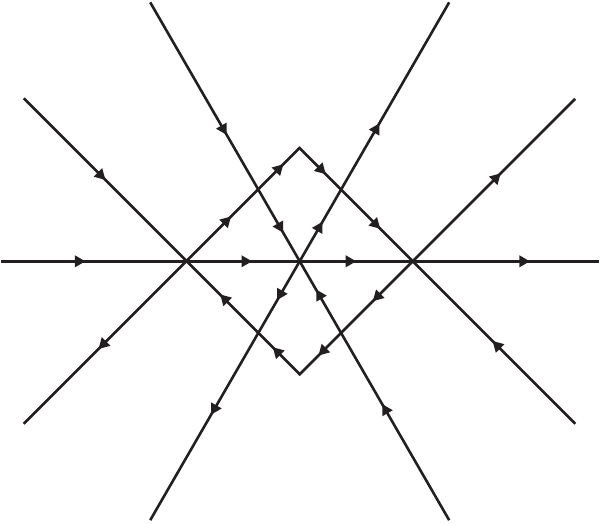}
      \put(67,38){\small $k_0$}
      \put(27.5,38){\small $-k_0$}
      \put(85,49){\small $V_1$}
      \put(68,58){\small $V_2$}
      \put(56,46){\small $V_3$}
      \put(56,38){\small $V_4$}
      \put(68,26){\small $V_5$}
      \put(85,35){\small $V_6$}
      \put(48,65){\small $V_2$}
      \put(48,52){\small $V_3$}
      \put(48,33){\small $V_4$}
      \put(48,20){\small $V_5$}
      \put(11,49){\small $V_1$}
      \put(28,58){\small $V_2$}
      \put(39,46){\small $V_3$}
      \put(39,38){\small $V_4$}
      \put(28,26){\small $V_5$}
      \put(11,35){\small $V_6$}
      \put(101,42.2){\small $\Gamma$}      
       \end{overpic}
    \qquad \qquad
     \begin{figuretext}\label{Gamma.pdf}
       The  contour  $\Gamma$ and the open sets $\{V_j\}_1^{6}$.
     \end{figuretext}
     \end{center}
\end{figure}

The following lemma describes how to decompose $r_j$, $j = 1, \dots, 4$, into an analytic part $r_{j,a}$ and a small remainder $r_{j,r}$. A proof can be found in \cite{Lnonlinearsteepest}.

\begin{lemma}\label{decompositionlemma2}
There exist decompositions
\begin{align*}
r_j(k) =
\begin{cases}
 r_{j,a}(x, t, k) + r_{j,r}(x, t, k), \qquad j = 1,4, \quad |k| > k_0,  \quad k \in \R,
	\\
 r_{j,a}(x, t, k) + r_{j,r}(x, t, k), \qquad j = 2,3, \quad |k| < k_0,  \quad k \in \R,
\end{cases}
\end{align*}
where the functions $\{r_{j,a}, r_{j,r}\}_{j=1}^4$ have the following properties:
\begin{enumerate}[$(a)$]
\item For each $\zeta \in \mathcal{I}$ and each $t > 0$, $r_{j,a}(x, t, k)$ is defined and continuous for $k \in \bar{U}_j$ and analytic for $k \in U_j$, $j = 1, \dots, 4$.

\item The functions $\{r_{j,a}\}_1^4$ satisfy, for each $K > 0$,
\begin{align}\nonumber
& |r_{j, a}(x, t, k) - r_j(k_0)| \leq 
C_K |k - k_0| e^{\frac{t}{4}|\re \Phi(\zeta,k)|}, 
	\\ \label{rjaestimatea}
& \hspace{4cm} k \in \bar{U}_j, \quad |k| \leq K, \quad \zeta \in \mathcal{I}, \quad t > 0,  \quad j = 1, \dots, 4, 
\end{align}
where the constant $C_K$ is independent of $\zeta, t, k$ but may depend on $K$.

\item The functions $r_{1,a}$ and $r_{4,a}$ satisfy
\begin{align}\label{rjaestimateb}
& |r_{j, a}(x, t, k)| \leq \frac{C}{1 + |k|} e^{\frac{t}{4}|\re \Phi(\zeta,k)|}, \qquad  k \in \bar{U}_j, \quad \zeta \in \mathcal{I}, \quad t > 0, \quad j = 1, 4, 
\end{align}
where the constant $C$ is independent of $\zeta, t, k$.

\item The $L^1, L^2$, and $L^\infty$ norms on $(-\infty, -k_0) \cup (k_0, \infty)$ of the functions $r_{1,r}(x, t, \cdot)$ and $r_{4,r}(x, t, \cdot)$ are $O(t^{-3/2})$ as $t \to \infty$ uniformly with respect to $\zeta \in \mathcal{I}$.

\item The $L^1, L^2$, and $L^\infty$ norms on $(-k_0, k_0)$ of the functions $r_{2,r}(x, t, \cdot)$ and $r_{3,r}(x, t, \cdot)$ are $O(t^{-3/2})$ as $t \to \infty$ uniformly with respect to $\zeta \in \mathcal{I}$.

\item The following symmetries are valid:
\begin{align}\label{rsymmetries}
r_{j,a}(\zeta, t, k) = \overline{r_{j,a}(\zeta, t, -\bar{k})}, \quad
r_{j,r}(\zeta, t, k) = \overline{r_{j,r}(\zeta, t, -\bar{k})}, \qquad j = 1, \dots, 4.
\end{align}
\end{enumerate}
\end{lemma}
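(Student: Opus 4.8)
\textbf{Proof proposal for Lemma \ref{decompositionlemma2}.}

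The plan is to follow the strategy used in Lemma \ref{decompositionlemma}: for each of the four functions $r_j$ we split off a rational piece that matches $r_j$ to sufficiently high order at the stationary point $k_0$ (and, for $j=1,4$, also controls decay at $k=\pm\infty$), then apply a Fourier-transform splitting to the remainder in order to produce an analytic continuation into the appropriate region $U_j$. First I would treat $r_1$ and $r_4$ on $(-\infty,-k_0)\cup(k_0,\infty)$. By the symmetry $r(k)=\overline{r(-\bar k)}$ and $\sup_\R|r|<1$, the denominators $1-r(k)\overline{r(\bar k)}$ are bounded away from zero on $\R$, so $r_1,r_4\in C^{11}(\R)$ inherit the decay $r_j^{(i)}(k)=O(k^{-4+2i})$ as $|k|\to\infty$ from the hypotheses on $r$, for $i=0,1,2$. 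Near $k=k_0$ I Taylor-expand $r_j$ to order one (we only need $r_{j,a}(x,t,k)-r_j(k_0)=O(|k-k_0|)$ times an exponential, and we need the first two derivatives of the remainder to be controlled), subtract off a low-degree rational function $g_j(k)$ with poles off the closure of the relevant region $U_j$ that matches $r_j$ to the required order both at $k_0$ and at infinity. Writing $r_j=g_j+(r_j-g_j)$, the difference $r_j-g_j$ vanishes to the prescribed orders at $k_0$ and $\infty$.

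Next, on the two rays making up $\{|k|>k_0\}$ I change variables via $k\mapsto\phi$ with $\phi=\Phi(\zeta,k)/i$ (or, following Step 1, via the real variable along which $\re\Phi=0$), turning $r_j-g_j$ into a function $F_j(\phi)$ lying in an appropriate Sobolev space $H^2$ by the decay and smoothness just established; the key point is that the coincidence of $r_j$ and $g_j$ to high enough order at both endpoints makes $F_j$ and two derivatives $L^2$-integrable. Taking the Fourier transform $\hat F_j$ and splitting the inversion integral at the cutoff corresponding to $t$, exactly as in the proof of Lemma \ref{decompositionlemma}, gives $r_j-g_j=f_{j,a}+f_{j,r}$ where $f_{j,a}$ extends analytically into $U_j$ with the bound $|f_{j,a}|\le C(1+|k|)^{-1}e^{\frac t4|\re\Phi|}$ (for $j=1,4$; the $(1+|k|)^{-1}$ factor comes from the rational prefactor in the change of variables) and $f_{j,r}$ has $L^1,L^2,L^\infty$ norms $O(t^{-3/2})$. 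Setting $r_{j,a}=g_j+f_{j,a}$ and $r_{j,r}=f_{j,r}$ yields $(a)$, $(c)$, $(d)$; the estimate $(b)$ follows by combining the Taylor estimate for $g_j$ at $k_0$ with the bound on $f_{j,a}$ on the compact set $|k|\le K$. The functions $r_2,r_3$ on $(-k_0,k_0)$ are handled the same way, except that no decay at infinity is required (the interval is bounded), so the rational correction need only match $r_j$ to first order at $k_0$ and the argument is slightly simpler; this gives $(a)$, $(b)$, $(e)$ for $j=2,3$. Finally, the symmetries $(f)$ follow because at every stage the constructions respect $k\mapsto-\bar k$: $r_j(k)=\overline{r_j(-\bar k)}$ forces the rational coefficients to obey conjugation relations (as in $a_j=(-1)^j\bar a_j$ in Lemma \ref{decompositionlemma}) and makes the relevant Fourier transform real-valued, so that $f_{j,a}(\zeta,t,k)=\overline{f_{j,a}(\zeta,t,-\bar k)}$ and likewise for $f_{j,r}$.

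The main obstacle is bookkeeping rather than conceptual: one must choose the degrees of the rational corrections $g_j$ and the Sobolev exponent so that (i) $F_j$ genuinely lies in $H^2$ — which needs $r_j$ to vanish to order $\ge 2$ (after subtracting $g_j$) at $k_0$ and to decay like $k^{-4}$ at infinity, precisely what the $C^{11}$ and $O(k^{-4+2j})$ hypotheses are calibrated to give — and (ii) the split of the Fourier integral at the $t$-dependent cutoff produces the $O(t^{-3/2})$ rate uniformly in $\zeta$. Uniformity in $\zeta\in\mathcal I$ requires checking that all constants (the bound on $1/(1-|r|^2)$, the Sobolev norms of $F_j$, the implied constants in the Taylor expansions at the moving point $k_0=k_0(\zeta)$) are bounded on the compact parameter range, which holds because $k_0$ ranges over a bounded interval and $r$ is fixed. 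Since the detailed computation is essentially identical to the line case, I would, as the lemma statement already indicates, refer to \cite{Lnonlinearsteepest} for the remaining routine verifications. \hfill$\Box$\bigskip
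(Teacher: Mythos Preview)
Your outline is correct and follows the same route as the paper: the paper does not prove Lemma~\ref{decompositionlemma2} in the text but simply cites \cite{Lnonlinearsteepest}, and the argument there is precisely the Deift--Zhou analytic approximation you describe --- subtract a rational function matching $r_j$ to sufficiently high order at $k_0$ (and at $\infty$ for $j=1,4$), pass to the phase variable $\phi=-i\Phi(\zeta,k)$, and split the Fourier inversion integral at a $t$-dependent cutoff, exactly as in the proof of Lemma~\ref{decompositionlemma}. Your identification of the only genuine subtlety (that $\phi'(k_0)=0$, so derivatives in $\phi$ pick up powers of $(k-k_0)^{-1}$, forcing the rational correction to match $r_j$ to high enough order at $k_0$ for $F_j\in H^2$) and of the uniformity-in-$\zeta$ issue is accurate; the precise order of matching needed is indeed a bookkeeping matter resolved in the cited reference.
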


\medskip
{\bf Step 5: Deform again.}
Let $\Gamma$ be the contour consisting of $\Sigma$ together with the four half-lines 
$$\big\{k_0 + u e^{\pm \frac{i\pi}{4}} \, |  -\sqrt{2} k_0 < u < \infty\big\}, \qquad
\big\{-k_0 + u e^{\pm \frac{3i\pi}{4}} \, | -\sqrt{2} k_0 < u < \infty\big\},$$
oriented as in Figure \ref{Gamma.pdf}. Let $\{V_j\}_1^{6}$ be the open sets shown in Figure \ref{Gamma.pdf}. Write
$$B_l = B_{l,r}B_{l,a}, \qquad
b_u = b_{u,r}b_{u,a}, \qquad
b_l = b_{l,r}b_{l,a}, \qquad
B_u = B_{u,r}B_{u,a},$$
where $\{B_{l,a}, b_{u,a}, b_{l,a}, B_{u,a}\}$ and $\{B_{l,r}, b_{u,r}, b_{l,r}, B_{u,r}\}$ denote the matrices $\{B_{l}, b_{u}, b_{l}, B_{u}\}$ with $\{r_j(k)\}_1^4$ replaced with $\{r_{j,a}(k)\}_1^4$ and $\{r_{j,r}(k)\}_1^4$, respectively.
The estimates (\ref{rjaestimatea}) and (\ref{rjaestimateb}) imply that 
\begin{align*}
\begin{cases}
B_{l,a}(x,t,\cdot), B_{l,a}^{-1}(x,t,\cdot) \in I + (\dot{E}^2 \cap E^\infty)(V_1),
	\\
b_{u,a}(x,t,\cdot), b_{u,a}^{-1}(x,t,\cdot) \in I + (\dot{E}^2 \cap E^\infty)(V_3),
	\\
b_{l,a}(x,t,\cdot), b_{l,a}^{-1}(x,t,\cdot) \in I + (\dot{E}^2 \cap E^\infty)(V_4),
	\\
B_{u,a}(x,t,\cdot), B_{u,a}^{-1}(x,t,\cdot) \in I + (\dot{E}^2 \cap E^\infty)(V_6),
\end{cases}
\end{align*}
for each $\zeta \in \mathcal{I}$ and each $t > 0$.
Hence $\tilde{M}$ satisfies the $L^2$-RH problem (\ref{RHMtilde}) iff $m(x,t,k)$ defined by
\begin{align*}
m(x,t,k) = \begin{cases}  
\tilde{M}(x,t,k) B_{l,a}(x,t,k)^{-1}, & k \in V_1, \\
\tilde{M}(x,t,k) b_{u,a}(x,t,k)^{-1}, & k \in V_3, \\
\tilde{M}(x,t,k) b_{l,a}(x,t,k)^{-1}, & k \in V_4, \\
\tilde{M}(x,t,k) B_{u,a}(x,t,k)^{-1}, & k \in V_6, \\
\tilde{M}(x,t,k), & \text{elsewhere},
\end{cases}
\end{align*}
satisfies the $L^2$-RH problem
\begin{align}\label{RHm2}
\begin{cases} m(x, t, \cdot) \in I + \dot{E}^2(\C \setminus \Gamma), \\
m_+(x, t, k) = m_-(x, t, k) v(x, t, k) \quad \text{for a.e.} \ k \in \Gamma, 
\end{cases} 
\end{align}
where, in view of (\ref{tildeJdef}) and (\ref{tildeJonR}), the jump matrix $v$ is given by
\begin{align}\label{vdef}
v = \begin{cases}
B_{l,a} = \begin{pmatrix} 1 & 0	\\
  \delta(\zeta, k)^{-2} r_{1,a}(x,t,k) e^{t\Phi(\zeta,k)}	& 1 \end{pmatrix}, & k \in \bar{V}_1 \cap \bar{V}_2, 
  	\\
b_{u,a} = \begin{pmatrix} 1 & -\delta(\zeta, k)^2 r_{2,a}(x,t,k) e^{-t\Phi(\zeta,k)}	\\
0 & 1  \end{pmatrix}, & k \in \bar{V}_2 \cap \bar{V}_3, 
	\\
b_{l,a} = \begin{pmatrix} 1 &0 \\
 -  \delta(\zeta, k)^{-2} r_{3,a}(x,t,k) e^{t\Phi(\zeta,k)} & 1 \end{pmatrix}, & k \in \bar{V}_4 \cap \bar{V}_5,
 	\\
B_{u,a} = \begin{pmatrix} 1	& \delta(\zeta, k)^2 r_{4,a}(x,t,k) e^{-t\Phi(\zeta,k)}	\\
0	& 1 \end{pmatrix}, & k \in \bar{V}_5 \cap \bar{V}_6.
	\\
B_{u,r}^{-1} B_{l,r}, & k \in \bar{V}_1 \cap \bar{V}_6, 
	\\
b_{l,r}^{-1}b_{u,r}, & k \in \bar{V}_3 \cap \bar{V}_4, 
 	\\
\begin{pmatrix} 1 & 0 \\ \delta(\zeta,k)^{-2}  h_{r}(t, k) e^{t\Phi(\zeta, k)} & 1 \end{pmatrix}, & k \in \partial D_1,
 	\\
\begin{pmatrix} 1 & \delta(\zeta,k)^2 \overline{h_r(t, \bar{k})} e^{-t\Phi(\zeta, k)} \\0 & 1 \end{pmatrix}, & k \in \partial D_4.
\end{cases}
\end{align}
 
From the symmetries (\ref{hsymmetries}), (\ref{chideltasymm}), and (\ref{rsymmetries}), we infer that $v$ satisfies
\begin{align}\label{vsymm2}
v(x,t,k) = \overline{v(x, t, -\bar{k})}, \qquad k \in \Gamma.
\end{align}

\medskip
{\bf Step 6: Apply Theorem \ref{steepestdescentth}.}
We verify that Theorem \ref{steepestdescentth} can be applied to the interval $\mathcal{I} = (0, N]$, the contour $\Gamma$, and the jump matrix $v$ with 
\begin{align*} \nonumber
& \epsilon = \frac{k_0}{2}, \qquad
\rho = \epsilon \sqrt{-i \frac{\partial^2 \Phi}{\partial k^2}(\zeta, k_0)}
 = \epsilon \sqrt{48 k_0}, \qquad \tau = t\rho^2 = 12 k_0^3 t,
 	\\
&  q(\zeta) = e^{\frac{1}{\pi i}\dashint_\R \frac{\psi(\zeta, s) ds}{s-k_0}} r(k_0) e^{2i\nu(\zeta) \ln(2\sqrt{48}k_0^{3/2})}, \qquad \nu(\zeta) = -\frac{1}{2\pi} \ln(1 -  |r(k_0)|^2),
	\\ 
& \phi(\zeta, z) = \Phi\biggl(\zeta, k_0 - \frac{\epsilon}{\rho}z\biggr)
 = -16i k_0^3 + \frac{iz^2}{2} - \frac{i z^3}{12 \rho}.
\end{align*}

\begin{figure}
\begin{center}
\medskip
 \begin{overpic}[width=.45\textwidth]{Gamma.pdf}
     \put(48,66.5){$\Gamma$}
           \put(67,38){\tiny $k_0$}
      \put(27.2,38){\tiny $-k_0$}
   \end{overpic}
       \qquad 
 \begin{overpic}[width=.445\textwidth]{Gammahat.pdf}
      \put(48,67){$\hat{\Gamma}$}
            \put(67,38){\tiny $k_0$}
      \put(27.2,38){\tiny $-k_0$}
      \end{overpic}
     \begin{figuretext}\label{Gammahat.pdf}
       The contours $\Gamma$ and $\hat{\Gamma}$ satisfy the properties {\upshape ($\Gamma$1)-($\Gamma$4)} of Theorem \ref{steepestdescentth}.
     \end{figuretext}
     \end{center}
\end{figure}

The contours $\Gamma$ and $\hat{\Gamma}$ are shown in Figure \ref{Gammahat.pdf}.
The conditions ($\Gamma$1)-($\Gamma$4) are clearly satisfied. 
Since the contour $k_0^{-1}\hat{\Gamma}$ is independent of $\zeta$, a scaling argument shows that $\|\mathcal{S}_{\hat{\Gamma}}\|_{\mathcal{B}(L^2(\hat{\Gamma}))}$ is independent of $\zeta$. In particular, $\mathcal{S}_{\hat{\Gamma}}$ is uniformly bounded on $L^2(\hat{\Gamma})$.
Equation (\ref{winL1L2Linf}) follows from (\ref{vdef}) and the estimates in Lemmas \ref{decompositionlemma} and \ref{decompositionlemma2}.
Clearly $\det v = 1$. The symmetry condition (\ref{vsymm}) follows from (\ref{vsymm2}).

We next verify (\ref{wL12infty}).
Let $w = v - I$ and let $\Gamma'$ denote the contour obtained from $\Gamma$  by removing the crosses $\pm k_0 + X^\epsilon$. By parts $(d)$ and $(e)$ of Lemma \ref{decompositionlemma2}, the $L^1$ and $L^\infty$ norms of $w$ are $O(t^{-\frac{3}{2}}) \leq O(\epsilon \tau^{-1})$ on $\R$ uniformly with respect to $\zeta \in \mathcal{I}$.
By part $(c)$ of Lemma \ref{decompositionlemma}, the $L^1$ and $L^\infty$ norms of $w$ are $O(t^{-\frac{3}{2}}) \leq O(\epsilon \tau^{-1})$ on $\partial D_1 \cup \partial D_4$ uniformly with respect to $\zeta \in \mathcal{I}$.
Let $\gamma$ denote the part of $\Gamma'$ that belongs to the line $k_0 + \R e^{\frac{i\pi}{4}}$, i.e.
$$\gamma = \bigg\{k_0 + ue^{\frac{i\pi}{4}} \, \bigg| \, u \in \bigg(-\sqrt{2}k_0, -\frac{k_0}{2}\bigg] \cup \bigg[\frac{k_0}{2}, \infty\bigg) \bigg\}.$$ 
Let $k = k_0 + ue^{\frac{i\pi}{4}}$. Then
\begin{align}\label{rePhiest}
\re \Phi(\zeta, k) = -4u^2(6k_0 + \sqrt{2} u) < - 16 u^2 k_0 \qquad \text{for} \quad -\sqrt{2}k_0 < u < \infty.
\end{align}
Together with (\ref{rjaestimateb}) this yields
\begin{align*}
|r_{1,a}(x, t, k) e^{t \Phi(\zeta, k)}| & \leq Ce^{-\frac{3t}{4}|\re \Phi(\zeta,k)|}
 \leq Ce^{-12 tu^2k_0} 
 \leq \frac{C t u^2k_0 e^{-12 tu^2k_0}}{t u^2 k_0} 
 	\\
& 
\leq \frac{C}{t u^2 k_0}
\leq \frac{Ck_0^2}{u^2 \tau}, \qquad \frac{k_0}{2}< u < \infty.
\end{align*}
Similarly, by (\ref{rjaestimatea}),
\begin{align*}
|r_{3,a}(x, t, k) e^{t \Phi(\zeta, k)}| & \leq C e^{-\frac{3t}{4}|\re \Phi(\zeta,k)|}
\leq \frac{Ck_0^2}{u^2 \tau}, \qquad -\sqrt{2}k_0 < u < -\frac{k_0}{2}.
\end{align*}
Hence
\begin{align*}
& \|w\|_{L^1(\gamma)} \leq  \frac{Ck_0^2}{\tau} \bigg(\int_{-\sqrt{2}k_0}^{-\frac{k_0}{2}} + \int_{\frac{k_0}{2}}^\infty\bigg)  u^{-2} du= O(k_0 \tau^{-1}),
	\\
& \|w\|_{L^\infty(\gamma)} = O(\tau^{-1}).
\end{align*}
This shows that the estimates in (\ref{wL12infty}) hold also on $\gamma$. Since similar arguments apply to the remaining parts of $\Gamma'$, this  verifies (\ref{wL12infty}).

Equation (\ref{vdef}) implies that (\ref{vjdef}) and (\ref{smallcrossjump}) are satisfied with
\begin{align*}
\begin{cases}
R_1(\zeta, t, z) =  \delta(\zeta,k)^{-2} r_{3,a}(x,t,k)z^{2i\nu(\zeta)},
	\\
R_2(\zeta, t, z) = \delta(\zeta, k)^2 r_{4,a}(x,t,k) z^{-2i\nu(\zeta)},
	\\
R_3(\zeta, t, z) = \delta(\zeta,k)^{-2} r_{1,a}(x,t,k) z^{2i\nu(\zeta)},
	\\
R_4(\zeta, t, z) = \delta(\zeta,k)^2 r_{2,a}(x,t,k) z^{-2i\nu(\zeta)},
\end{cases}
\end{align*}
where $k$ and $z$ are related by $k = k_0 - \frac{\epsilon z}{\rho}$. 

It is clear that $\phi$ satisfies (\ref{phiassumptions}) and (\ref{Phiz3estimate}). 
The following estimate proves  (\ref{rephiestimateb}):
$$\re \phi(\zeta, z) = \frac{|z|^2}{2}\bigg(1 \pm \frac{|z|}{6\sqrt{2} \rho}\bigg) \geq \frac{|z|^2}{4}, \qquad z \in X_2^{\rho} \cup X_4^{\rho}, \quad \zeta \in \mathcal{I},$$
where the plus and minus signs are valid for $z \in X_2^{\rho}$ and $z \in X_4^{\rho}$ respectively. The proof of (\ref{rephiestimatea}) is similar.
Since $|q(\zeta)| = |r(k_0)|$, we have $\sup_{\zeta \in \mathcal{I}} |q(\zeta)| < 1$ and $\nu(\zeta) = -\frac{1}{2\pi} \ln(1 - |q(\zeta)|^2)$.

Finally, we show that given any $\alpha \in [1/2, 1)$, there exists an $L >0$ such that the inequalities (\ref{Lipschitzconditions}) hold. Let  $k = k_0 - \frac{\epsilon z}{\rho}$.
Using the expression (\ref{deltarep}) for $\delta(\zeta, k)$, we may write
$$R_1(\zeta, t, z) = e^{-2\chi(\zeta, k)} r_{3,a}(x,t,k) e^{2i\nu(\zeta) \ln((-k_0 - k)\sqrt{48k_0})} , \qquad z \in X_1.$$ 
Thus,
$$R_1(\zeta, t, 0) = e^{-2\chi_-(\zeta, k_0)} r_{3,a}(x,t,k_0)  e^{2i\nu(\zeta) (\ln(2\sqrt{48}k_0^{3/2}) + i\pi)}.$$
Now $r_{3,a}(x,t,k_0) = r(k_0)$ by (\ref{rjaestimatea}), and
$$\chi_-(\zeta, k_0) = - \frac{1}{2\pi i} \dashint_\R \frac{\psi(\zeta, s) ds}{s-k_0} - \pi \nu(\zeta)$$
by Sokhotski-Plemelj. Hence $R_1(\zeta, t, 0) = q(\zeta)$.
Similarly, we find 
$$R_2(\zeta, t, 0) = \frac{\overline{q(\zeta)}}{1 - |q(\zeta)|^2}, \qquad
R_3(\zeta, t, 0) = \frac{q(\zeta)}{1 - |q(\zeta)|^2}, \qquad
R_4(\zeta, t, 0) = \overline{q(\zeta)}.$$
We establish the estimate (\ref{Lipschitzconditions}) in the case of $z \in X_1^{\rho}$; the other cases are similar.
Note that $z \in X_1^{\rho}$ corresponds to $k \in k_0 + X_3^\epsilon$.

We have, for $z \in X_1^\rho$,
\begin{align*}
|R_1(\zeta, t, z) &- q(\zeta)| \leq 
\left|e^{-2\chi(\zeta, k)} - e^{-2\chi_-(\zeta, k_0)}\right| \left|r_{3,a}(x,t,k) e^{2i\nu(\zeta) \ln((-k_0 - k)\sqrt{48k_0})}\right|
	\\
& + \left| e^{-2\chi_-(\zeta,k_0)}\right| \left|r_{3,a}(x,t,k) - r_3(k_0)\right| \left| e^{2i\nu(\zeta) \ln((-k_0 - k)\sqrt{48k_0})}\right|
	\\
& + \left| e^{-2\chi_-(\zeta, k_0)} r_3(k_0)\right| \left|e^{2i\nu(\zeta) (\ln((k_0 + k)\sqrt{48k_0}) + i\pi)} - e^{2i\nu(\zeta) (\ln(2\sqrt{48}k_0^{3/2}) + i\pi)}\right|.
\end{align*}
The functions $e^{2i\nu(\zeta)\ln((-k_0 - k)\sqrt{48k_0})}$ and $e^{-2\chi_-(\zeta, k_0)}$ are uniformly bounded with respect to $k \in k_0 + X_3^\epsilon$ and $\zeta \in \mathcal{I}$. 
Moreover, employing the estimate 
$$|\re \phi(\zeta, ve^{\frac{i\pi}{4}} )| = \bigg|-\frac{v^2}{2}\bigg(1 - \frac{v}{\sqrt{72} \rho}\bigg)\bigg|
\leq \frac{2v^2}{3}, \qquad -\rho \leq v \leq \rho,$$
we see that equation (\ref{rjaestimatea}) yields
\begin{align*}
|r_{3, a}(x, t, k) - r_3(k_0)| 
& \leq C |k -k_0| e^{\frac{t}{4}|\re \Phi(\zeta,k)|} 
 = C \frac{\epsilon|z|}{\rho} e^{\frac{t}{4}|\re \phi(\zeta,z)|} 
 	\\
&  \leq C \frac{\epsilon|z|}{\rho} e^{\frac{t |z|^2}{6}}, \qquad z \in X_1^{\rho}.
\end{align*}
Thus,
\begin{align}\nonumber
 |R_1(\zeta, t, z) - q(\zeta)| \leq &\; C e^{\frac{t |z|^2}{6}}
 \left|e^{-2\chi(\zeta, k)} - e^{-2\chi_-(\zeta, k_0)}\right| + C \frac{\epsilon |z|}{\rho}e^{\frac{t|z|^2}{6}}
	\\ \label{R1minusq}
& + C \big|1 - e^{-2i\nu(\zeta)\ln(\frac{k_0 + k}{2k_0})} \big|, \qquad \zeta \in \mathcal{I}, \quad t > 0, \quad z \in X_1^{\rho}.
\end{align}
The estimate in (\ref{Lipschitzconditions}) for $z \in X_1^{\rho}$ now follows from the following lemma.

\begin{lemma}
The following inequalities are valid for all $\zeta \in \mathcal{I}$ and all $k \in k_0 + X_3^\epsilon$:
\begin{align}\label{term1}
& \left|e^{-2\chi(\zeta, k)} - e^{-2\chi_-(\zeta, k_0)}\right| \leq C |k-k_0| (1 + |\ln|k-k_0||), 
	\\ \label{term3}
& \left|1 - e^{-2i\nu(\zeta)\ln(\frac{k_0 + k}{2k_0})}\right|  \leq C k_0^{-1} |k-k_0|,
\end{align}
where the constant $C$ is independent of $\zeta$ and $k$. 
\end{lemma}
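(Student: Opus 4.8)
\noindent\emph{Proof strategy.}\quad
The plan is to reduce both inequalities to the elementary bound (\ref{ewminus1estimate}) together with the uniform regularity of the spectral data, treating (\ref{term3}) first because it is the simpler of the two. I would set $w := -2i\nu(\zeta)\ln\!\big(\tfrac{k_0+k}{2k_0}\big)$ and note that, since $k \in k_0 + X_3^\epsilon$ and $\epsilon = k_0/2$, one has $\big|\tfrac{k-k_0}{2k_0}\big| < \tfrac14$, so that $\tfrac{k_0+k}{2k_0} = 1 + \tfrac{k-k_0}{2k_0}$ stays in a fixed compact neighbourhood of $1$ bounded away from the branch cut of the logarithm; hence $\big|\ln\tfrac{k_0+k}{2k_0}\big| \le C\,k_0^{-1}|k-k_0|$ and $\big|\arg\tfrac{k_0+k}{2k_0}\big| \le C$. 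Since $\sup_{\zeta\in\mathcal{I}}|\nu(\zeta)| < \infty$ (a consequence of $\sup_{k\in\R}|r(k)|<1$), this gives $|w| \le C\,k_0^{-1}|k-k_0|$ and $|\re w| \le C$, so (\ref{ewminus1estimate}) yields $|1 - e^{w}| \le |w|\max(1,e^{\re w}) \le C\,k_0^{-1}|k-k_0|$, which is (\ref{term3}).

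For (\ref{term1}), I would first reduce to a modulus-of-continuity estimate for $\chi$ itself: using (\ref{chibound}) to bound $\big|e^{-2\chi_-(\zeta,k_0)}\big|$ and $\re\!\big({-}2(\chi(\zeta,k)-\chi_-(\zeta,k_0))\big)$ uniformly, together with (\ref{ewminus1estimate}),
\[
\big|e^{-2\chi(\zeta,k)} - e^{-2\chi_-(\zeta,k_0)}\big| = \big|e^{-2\chi_-(\zeta,k_0)}\big|\,\big|e^{-2(\chi(\zeta,k)-\chi_-(\zeta,k_0))} - 1\big| \le C\,\big|\chi(\zeta,k)-\chi_-(\zeta,k_0)\big|,
\]
so it suffices to prove $|\chi(\zeta,k)-\chi_-(\zeta,k_0)| \le C|k-k_0|\big(1+|\ln|k-k_0||\big)$. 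The structural input is that $\psi(\zeta,\cdot)$ is Lipschitz with a constant $L := \sup_{s\in\R}\big|\tfrac{d}{ds}\ln(1-|r(s)|^2)\big|$ that is independent of $\zeta$: it is constant on $(-k_0,k_0)$, equals $\ln(1-|r(s)|^2)$ with bounded derivative for $|s|>k_0$ (as $r,r'$ are bounded and $\sup_\R|r|<1$), and the two pieces match continuously at $\pm k_0$. Approximating $\chi_-(\zeta,k_0)$ by $\chi(\zeta,k_0-i\eta)$ and letting $\eta\downarrow 0$ — the factor $\psi(\zeta,s)-\psi(\zeta,k_0)$ vanishing at $s=k_0$ kills the Sokhotski--Plemelj delta, while $\int_\R\frac{ds}{(s-k)(s-k_0+i\eta)}=0$ by closing the contour in the upper half-plane — one arrives at the representation
\[
\chi(\zeta,k)-\chi_-(\zeta,k_0) = -\frac{k-k_0}{2\pi i}\int_\R \frac{\psi(\zeta,s)-\psi(\zeta,k_0)}{(s-k)(s-k_0)}\,ds.
\]

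Set $\delta := |k-k_0|$. Because $k \in k_0 + X_3^\epsilon$ lies on a ray making a $45^\circ$ angle with $\R$, we have $|\im k| = \delta/\sqrt2$, hence $|s-k|\ge \delta/\sqrt2$ for every $s\in\R$, and $|s-k|\ge|s-k_0|-\delta\ge\tfrac12|s-k_0|$ whenever $|s-k_0|\ge2\delta$. I would then split the integral over the three regions $|s-k_0|<2\delta$, $2\delta\le|s-k_0|\le R_0$, and $|s-k_0|>R_0$, with $R_0 := 1 + 2\sup_{\zeta\in\mathcal{I}}\epsilon(\zeta) < \infty$ fixed; on the first two use $|\psi(\zeta,s)-\psi(\zeta,k_0)|\le L|s-k_0|$ and on the third $|\psi(\zeta,s)-\psi(\zeta,k_0)|\le 2\|\psi(\zeta,\cdot)\|_{L^\infty(\R)}$. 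The three contributions are then controlled respectively by $C\delta$, $C\delta\ln(R_0/2\delta)\le C\delta(1+|\ln\delta|)$, and $C\delta/R_0$, and summing gives (\ref{term1}); uniformity in $\zeta$ is automatic since $L$, $\sup_\zeta\|\psi(\zeta,\cdot)\|_{L^\infty}$, $\sup_\zeta|\nu(\zeta)|$ and $R_0$ are all $\zeta$-independent. The only mildly delicate points are the passage to the displayed representation (the cancellation of the principal-value and $i\pi\psi(\zeta,k_0)$ terms) and the observation that the logarithm arises solely from the intermediate annulus $2\delta\le|s-k_0|\le R_0$; the rest is a routine estimate of a Cauchy-type integral with Lipschitz density, of exactly the type already carried out in \cite{Lnonlinearsteepest}.
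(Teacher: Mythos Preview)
Your proof is correct. For (\ref{term3}) you do essentially what the paper does. For (\ref{term1}) you take a genuinely different route: the paper integrates by parts in the definition of $\chi$ to obtain the representation
\[
\chi(\zeta,k)-\chi_-(\zeta,k_0)=\frac{1}{2\pi i}\biggl(\int_{-\infty}^{-k_0}+\int_{k_0}^\infty\biggr)\ln\frac{s-k}{s-k_0}\,d\ln\bigl(1-|r(s)|^2\bigr),
\]
bounds the measure $d\ln(1-|r(s)|^2)$ by $C(1+s^2)^{-1}\,ds$, and then estimates the resulting logarithmic integral by the substitution $v=|k-k_0|/u$. You instead subtract the constant $\psi(\zeta,k_0)$ inside the Cauchy integral, use that $\int_\R \frac{ds}{(s-k)(s-k_0+i0)}=0$ (both poles in the lower half-plane), and exploit the global Lipschitz regularity of $\psi(\zeta,\cdot)$ together with a three-scale splitting near/intermediate/far from $k_0$. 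Both arguments produce the logarithm from the same mechanism---an integral of size $\int_{c\delta}^{C}\frac{du}{u}$ over the intermediate range---but package the input differently: the paper uses the decay of $r,r'$ at infinity (the $(1+s^2)^{-1}$ weight), whereas your version uses only boundedness of $\psi$ and $\psi'$ and gets the decay at infinity from the explicit $(s-k)^{-1}(s-k_0)^{-1}$ kernel. Your approach is perhaps the more classical Plemelj--Privalov style estimate for Cauchy integrals with Lipschitz density; the paper's integration-by-parts route is slightly more economical with the hypotheses on $r$ since it never needs to justify the limit $\eta\downarrow 0$ in your displayed representation.
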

\proofbegin
We first prove that
\begin{align}\label{chiminuschi}
|\chi(\zeta, k) - \chi_-(\zeta, k_0)| \leq C |k-k_0|\big(1 + |\ln|k-k_0||\big), \qquad \zeta \in \mathcal{I}, \quad k \in k_0 + X_3^\epsilon.
\end{align}
Integration by parts in the definition (\ref{chidef}) of $\chi$ gives 
\begin{align}\label{chizetal}
 \chi(\zeta, k) = \frac{1}{2\pi i} \bigg(\int_{-\infty}^{-k_0} + \int_{k_0}^\infty\bigg) \ln(s-k) d\ln\bigl(1- |r(s)|^2\bigr).
\end{align}
Hence
$$|\chi(\zeta, k) - \chi_-(\zeta, k_0)| 
= \frac{1}{2\pi} \biggl| \bigg(\int_{-\infty}^{-k_0} + \int_{k_0}^\infty\bigg) \ln\biggl(\frac{s - k}{s - k_0}\biggr) d\ln\bigl(1- |r(s)|^2\bigr) \biggr|.$$
This gives
\begin{align}\nonumber
& |\chi(\zeta, k) - \chi_-(\zeta, k_0)| 
 \leq C \bigg(\int_{-\infty}^{-k_0} + \int_{k_0}^\infty\bigg) \biggl| \ln\frac{s - k}{s - k_0}\biggr| \frac{ds}{1 + |s|^2}
	\\ \label{chichichi}
&\leq  C \int_{2k_0}^\infty \biggl| \ln\biggl(1 + \frac{k - k_0}{u}\biggr) \biggr| \frac{du}{1 + (u - k_0)^2}
+ C \int_0^\infty \biggl| \ln\biggl(1 - \frac{k - k_0}{u}\biggr) \biggr|\frac{du}{1 + u^2}.
\end{align}
The change of variables $v =|k - k_0|/u$ yields
\begin{align*}
\int_0^\infty \biggl| \ln\biggl(1 - \frac{k - k_0}{u}\biggr) \biggr| \frac{du}{1 + u^2}
& = |k-k_0| \int_0^\infty \bigl| \ln(1 + v e^{\frac{i\pi}{4}}) \bigr| \frac{dv}{v^2 + |k - k_0|^2}.
\end{align*}
Since 
$$\bigl| \ln(1 + v e^{\frac{i\pi}{4}}) \bigr|  \leq C \begin{cases}
v, & 0 \leq v \leq 2, 
	\\
\ln v, & 2 \leq v < \infty,
\end{cases}$$
we conclude that
\begin{align*}
\int_0^\infty \biggl| \ln\biggl(1 - \frac{k - k_0}{u}\biggr) \biggr| \frac{du}{1 + u^2}
& \leq C|k - k_0|\bigg(\int_0^2 \frac{vdv}{v^2 + |k - k_0|^2} + \int_2^\infty \frac{\ln v}{v^2}dv\bigg)
	\\
& \leq C |k-k_0| (|\ln|k-k_0|| + C), \qquad k \in k_0 + X_3^\epsilon.
\end{align*}
A similar argument applies to the first term on the right-hand side of (\ref{chichichi}). 
This proves (\ref{chiminuschi}). 

Using the inequality (\ref{ewminus1estimate}) together with (\ref{chibound}) and (\ref{chiminuschi}), we estimate
\begin{align*}
|e^{-2\chi(\zeta, k)} - e^{-2\chi_-(\zeta, k_0)}|
& \leq |e^{-2\chi_-(\zeta, k_0)}| |e^{-2[\chi(\zeta, k) - \chi_-(\zeta, k_0)]} - 1 |
	\\
& \leq C |\chi(\zeta, k) - \chi_-(\zeta, k_0)| e^{2|\re(\chi(\zeta, k) - \chi_-(\zeta, k_0))|}
	\\
& \leq C |k-k_0| (1+ |\ln|k-k_0||), \qquad \zeta \in \mathcal{I}, \quad k \in 1+ X_3^\epsilon.
\end{align*}
This proves (\ref{term1}).

By (\ref{ewminus1estimate}),
\begin{align*}
\left|1 - e^{-2i\nu(\zeta)\ln(\frac{k_0 + k}{2k_0})}\right| & \leq \bigg|2\nu(\zeta)\ln\bigg(\frac{k_0 + k}{2k_0}\bigg)\bigg| e^{|\re(2i\nu(\zeta)\ln(\frac{k_0 + k}{2k_0}))|}
	\\
& \leq C \bigg|\ln\biggl(1 + \frac{k-k_0}{2k_0}\biggr)\bigg| \leq C k_0^{-1} |k-k_0|, \quad \zeta \in \mathcal{I},
\quad k \in k_0 + X_3^\epsilon, 
\end{align*}
which proves (\ref{term3}).
\proofendcontinue

\medskip
{\bf Step 7: Find asymptotics.}
As a consequence of Theorem \ref{steepestdescentth}, the $L^2$-RH problem (\ref{RHm2}) for $m$ has a unique solution and the limit in (\ref{limlm12}) exists for all sufficiently large $\tau$. Since the RH problems for $M$ and $m$ are equivalent, the $L^2$-RH problem (\ref{RHM}) for $M$ also has a unique solution for all sufficiently large $\tau$. Moreover, we conclude that the limit (\ref{ulim}) defining $u(x,t)$ exists whenever $\tau$ is large enough and
\begin{align*}
u(x,t) & = -2i\ntlim_{k\to\infty} (kM(x,t,k))_{12} = -2i \ntlim_{k\to\infty} (km(x,t,k))_{12}.
\end{align*}
Equation (\ref{limlm12}) of Theorem \ref{steepestdescentth} then yields
\begin{align*}
u(x,t) & = -\frac{4 \epsilon \re \beta(\zeta, t)}{\sqrt{\tau}} + O\bigl(\epsilon \tau^{-\frac{1+\alpha}{2}}\bigr)
 = -\frac{\re \beta(\zeta, t)}{\sqrt{3t k_0}} + O\bigl(\epsilon \tau^{-\frac{1+\alpha}{2}}\bigr),	
\end{align*}
where $\beta(\zeta, t)$ is defined by (\ref{betadef}). We have
$$\re \beta(\zeta,t) = \sqrt{\nu(\zeta)}\cos\bigg(\frac{\pi}{4} - \arg q(\zeta) + \arg \Gamma(i\nu(\zeta))
 + 16 tk_0^3 - \nu(\zeta)\ln t\bigg).$$
where
$$\arg q(\zeta) = 2\nu(\zeta) \ln(2k_0 \sqrt{48k_0}) + \arg{r(k_0)} - \frac{1}{\pi} \dashint_\R \frac{\psi(\zeta, s)ds}{s - k_0}.$$
Thus,
\begin{align*}
u(x,t) & = -\sqrt{\frac{\nu(\zeta)}{3tk_0}} \cos(16tk_0^3 - \nu(\zeta) \ln(192tk_0^3) + \phi(\zeta)) + O\bigl(\epsilon \tau^{-\frac{1+\alpha}{2}}\bigr),
\end{align*}
where $\phi(\zeta)$ is defined by (\ref{phizetadef}). This proves (\ref{ufinal}) and completes the proof of Theorem \ref{mainth1}.
\proofend

\section{A model RH problem}\nequation\label{painlevesec}
The remainder of this paper is devoted to deriving the long-time asymptotics for the mKdV equation (\ref{mkdv}) in the self-similar sector (\ref{sectorIVdef}). 
The goal of the present section is to prove Theorem \ref{mZth} which expresses the large $z$ behavior of the solution of a model RH problem (see equation (\ref{RHmZ})) in terms of the solution of the Painlev\'e II equation. The result of Theorem \ref{mZth} will be important for the proof of Theorem \ref{steepestdescentthIV}.

Let $Z$ denote the contour $Z = Z_1 \cup Z_2 \cup Z_3$, where the line segments
\begin{align} \nonumber
&Z_1 = \bigl\{1+ re^{\frac{i\pi}{6}}\, \big| \, 0 \leq r < \infty\bigr\} \cup \bigl\{-1 + re^{\frac{5i\pi}{6}}\, \big| \, 0 \leq r < \infty\bigr\},  
	\\ \nonumber
&Z_2 = \bigl\{-1 + re^{-\frac{5i\pi}{6}}\, \big| \, 0 \leq r < \infty\bigr\} \cup \bigl\{1 + re^{-\frac{i\pi}{6}}\, \big| \, 0 \leq r < \infty\bigr\}, 
	\\ \label{Zdef}
& Z_3 = \bigl\{r\, \big| \, -1 \leq r \leq 1\bigr\}
\end{align}
are oriented as in Figure \ref{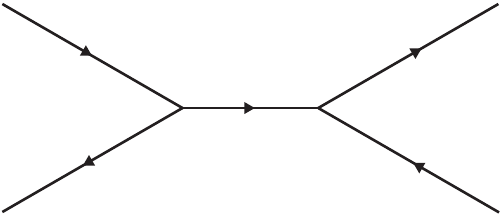}. 
\begin{figure}
\begin{center}
\bigskip
 \begin{overpic}[width=.6\textwidth]{Z.pdf}
 \put(78,36){$Z_1$}
 \put(17,36){$Z_1$}
 \put(17,4){$Z_2$}
 \put(78,4){$Z_2$}
 \put(47,25){$Z_3$}
 \put(62,16.5){$1$}
 \put(33,16.5){$-1$}
 \end{overpic}
   \bigskip
   \begin{figuretext}\label{Z.pdf}
      The contour $Z$. 
      \end{figuretext}
   \end{center}
\end{figure}
Given a positive number $z_0 > 0$, let $z_0 Z = \{z_0 z \in \C\, | \,  z \in Z\}$ denote the contour $Z$ scaled by the factor $z_0$. 
It turns out that the long time asymptotics in the self-similar sector is related to the solution $m^Z$ of the following family of RH problems parametrized by $s \in i\R$, $y \in \R$,  $z_0 > 0$:
\begin{align}\label{RHmZ}
\begin{cases}
m^Z(s, y, \cdot, z_0) \in I + \dot{E}^2(\C \setminus z_0Z),\\
m_+^Z(s, y, z, z_0) =  m_-^Z(s, y, z, z_0) v^Z(s, y, z, z_0) \quad \text{for a.e.} \ z \in z_0Z,
\end{cases}
\end{align}
where the jump matrix $v^Z(s, y, z, z_0)$ is defined by
\begin{align}
v^Z(s, y, z, z_0) = \begin{cases}
 \begin{pmatrix} 1	& 0  \\
s e^{2i(yz + \frac{4z^3}{3})}  & 1 \end{pmatrix}, &  z \in z_0Z_1, 
	\\
\begin{pmatrix} 1 & -se^{-2i(yz + \frac{4z^3}{3})}	\\
0	& 1 \end{pmatrix}, &   z \in z_0Z_2, 
  	\\
\begin{pmatrix} 1 & se^{-2i(yz + \frac{4z^3}{3})}\\
0 	& 1 \end{pmatrix}
\begin{pmatrix} 1 &0 \\
s e^{2i(yz + \frac{4z^3}{3})} 	& 1 \end{pmatrix}, &  z \in z_0Z_3.
\end{cases}
\end{align}
Note that $v^Z \to I$ exponentially fast as $z \to \infty$, $z \in z_0Z$.
Moreover, $v^Z$ obeys the symmetry
$$v^Z(s, y, z, z_0) = \sigma_3 \overline{v^Z(s, y, -\bar{z}, z_0)} \sigma_3.$$
The following theorem expresses the asymptotics of  $m^Z$ in terms of the solution $u^P$ of the Painlev\'e II equation 
\begin{align}\label{painleve2}
  u^P_{yy} = yu^P + 2(u^P)^3.
\end{align}

\begin{theorem}\label{mZth}
Let $s \in i \R$ be such that $|s|<1$. Let $y \in \R$ and $z_0 > 0$. Then the $L^2$-RH problem (\ref{RHmZ}) has a unique solution $m^Z(s, y, z, z_0)$ which satisfies
\begin{align}\label{mZasymptotics}
  m^Z(s, y, z, z_0) = I + \frac{1}{2z}\begin{pmatrix} 0 & u^P(y; s, 0, -s) \\ u^P(y; s, 0, -s) & 0 \end{pmatrix} + O\biggl(\frac{1}{z^2}\biggr), \qquad z \to \infty,
\end{align}  
where $u^P(\cdot; s, 0,-s)$ denotes the solution of the Painlev\'e II equation (\ref{painleve2}) corresponding to $(s_1, s_2, s_3) = (s,0,-s)$ according to (\ref{painlevebijection}), and the error term is uniform with respect to $z_0 > 0$, $\arg z \in [0, 2\pi]$, and $y$ in compact subsets of $\R$.
Moreover, if $K$ is a compact subset of $\R$, then
\begin{align}\label{mZbounded}
\sup_{z_0 > 0} \sup_{y \in K} \sup_{z \in \C\setminus z_0Z}  |m^Z(s,y,z,z_0)| < \infty.
\end{align}
\end{theorem}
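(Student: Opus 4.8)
The plan is to reduce the family (\ref{RHmZ}), which carries the extra scale $z_0$, to the single Riemann--Hilbert problem for the Painlev\'e II equation recorded in Appendix \ref{painleveapp}, and then to extract (\ref{mZasymptotics}) and (\ref{mZbounded}) from the known properties of that problem. The first observation is that, regarded as a function of $z$, the jump matrix $v^Z(s,y,z,z_0)$ does not depend on $z_0$ at all: the parameter $z_0$ merely fixes the location of the contour $z_0Z$, while on each ray and on the segment the jump is the restriction of an \emph{entire} matrix-valued function of $z$ (the exponential $e^{\pm 2i(yz+\frac{4z^3}{3})}$ is entire). Since $z_0Z$ may therefore be deformed freely through regions in which the jump extends analytically, $m^Z$ is independent of $z_0$ in the natural sense, and the uniformity with respect to $z_0$ in all the assertions becomes automatic; after this reduction it suffices to treat $z_0=1$.

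Next I would identify (\ref{RHmZ}) with the Painlev\'e II Riemann--Hilbert problem of Appendix \ref{painleveapp} by a contour deformation. On the four rays the jump is triangular with an entire off-diagonal entry, so their base points can be slid toward the origin; on the segment $Z_3$ the jump is already presented as a product of an upper-triangular and a lower-triangular factor, each of which is entire and decays exponentially in the adjacent Stokes sector (the sign of $\im(yz+\frac{4z^3}{3})$ determines which), so the two factors may be absorbed into $m^Z$ across lenses and merged with the neighbouring rays. What remains is precisely the Painlev\'e II RH problem associated, via (\ref{painlevebijection}), with the Stokes data $(s_1,s_2,s_3)=(s,0,-s)$; the vanishing of $s_2$ is what lets the two remaining rays of the standard six-ray contour drop out. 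Since $s\in i\R$ with $|s|<1$, this is the Ablowitz--Segur branch, whose solution of (\ref{painleve2}) is real and pole-free on all of $\R$; hence, by the facts of Appendix \ref{painleveapp} together with the vanishing lemma available from the symmetry $v^Z(z)=\sigma_3\overline{v^Z(-\bar z)}\sigma_3$, the RH problem has a unique solution for every $y\in\R$.

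With the reduction in hand, (\ref{mZasymptotics}) follows by expanding the Cauchy integral that represents the deformed solution: the deformed jump is exponentially close to $I$ along a \emph{fixed} contour, so $m^Z=I+\frac{m_1(y)}{z}+O(z^{-2})$, uniformly in $\arg z\in[0,2\pi]$ and, since the solution has no poles, uniformly for $y$ in compact subsets of $\R$. The off-diagonal entries of $m_1(y)$ equal $\tfrac{1}{2}u^P(y;s,0,-s)$ by the classical recovery of a Painlev\'e transcendent from its RH data: conjugating out the phase makes the jumps piecewise constant, so the logarithmic derivatives of the conjugated matrix in $y$ and $z$ have no jump, are entire of polynomial growth, hence polynomial in $z$, and the resulting Lax pair is compatible exactly when its $1/z$-coefficient solves (\ref{painleve2}). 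The triangular factors used in the deformation are bounded and equal $I$ outside a bounded set, so both the $1/z$-coefficient and the global bound (\ref{mZbounded}) --- which for the deformed problem is just the (pole-free, hence locally-uniform-in-$y$) boundedness of the Painlev\'e II RH solution off its contour --- transfer back to $m^Z$, uniformly in $z_0$.

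The step I expect to require the most care is the contour deformation near the origin, where the cubic term of $yz+\frac{4z^3}{3}$ does not dominate: there one must keep the lens boundaries and the slid rays strictly inside the Stokes sectors in which $\pm\im(yz+\frac{4z^3}{3})$ has a fixed sign, so that the triangular factors genuinely decay along the homotopy. Making these quadratic estimates quantitative, and carrying through the bookkeeping that pins down the precise coefficient $\tfrac{1}{2z}$ and the data $(s,0,-s)$, is the technical heart of the argument; everything else is a direct appeal to the Painlev\'e II facts of Appendix \ref{painleveapp}.
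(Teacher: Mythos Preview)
Your approach is essentially the paper's: reduce (\ref{RHmZ}) to the Painlev\'e~II RH problem (\ref{RHmP}) with Stokes data $(s,0,-s)$ by absorbing the triangular factors into $m^Z$ in appropriate regions, invoke Hastings--McLeod/Ablowitz--Segur to ensure the Painlev\'e solution is pole-free on $\R$ (so the RH problem is solvable for every $y\in\R$), and then read off (\ref{mZasymptotics}) and (\ref{mZbounded}) from the known asymptotics (\ref{mPasymptotics}) of $m^P$; the $z_0$-uniformity drops out because $m^P$ does not see $z_0$.

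One point deserves correction. You flag the deformation near the origin as ``the technical heart,'' worrying that the lens boundaries must be kept in Stokes sectors where $\pm\im(yz+\tfrac{4}{3}z^3)$ has a fixed sign so that the triangular factors \emph{decay}. This is unnecessary and, if taken literally, would cause trouble: for $y<0$ (which is the relevant range here, since $y=-x/(3t)^{1/3}$) the sign of $\im(yz+\tfrac{4}{3}z^3)$ is unfavorable in a neighborhood of the origin in the upper half-plane, so no lens of fixed shape achieves decay there. The paper sidesteps this entirely: it defines explicit regions $\Omega_1,\Omega_2$ (see Figure~\ref{OmegasXZ.pdf}) bounded by the rays of $z_0Z$ and the Painlev\'e rays, and simply uses that $e^{\pm 2i(yz+\frac{4}{3}z^3)}\in(\dot{E}^2\cap E^\infty)(\Omega_j)$. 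Boundedness near the origin is automatic because the exponential is entire; the only nontrivial estimate is at infinity, where the cubic dominates the linear term regardless of the sign of $y$. So the deformation requires no quadratic estimates and no care near $z=0$; once you see this, the proof is a two-line transformation plus an appeal to Appendix~\ref{painleveapp}.
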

\proofbegin
Uniqueness follows since $\det v = 1$.
Moreover, the RH problem (\ref{RHmZ}) for $m^Z$ can be transformed into the RH problem (\ref{RHmP}) for $m^P$ with jump data specified by $(s_1, s_2, s_3) = (s,0,-s)$.  
Indeed, let $\{\Omega_j\}_0^2$ be as in Figure \ref{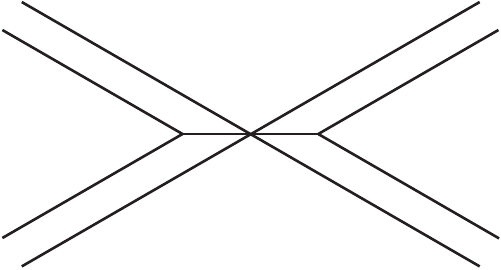}. 
\begin{figure}
\begin{center}
\bigskip
 \begin{overpic}[width=.7\textwidth]{OmegasXZ.pdf}
 \put(34.4,24){$-z_0$}
 \put(62,24){$z_0$}
 \put(85,26){$\Omega_0$}
 \put(78,39.5){$\Omega_1$}
 \put(49,45){$\Omega_0$}
 \put(17,40){$\Omega_1$}
 \put(11,26){$\Omega_0$}
 \put(17,12){$\Omega_2$}
 \put(49,7){$\Omega_0$}
 \put(78.5,12.5){$\Omega_2$}
 \end{overpic}
   \bigskip
   \begin{figuretext}\label{OmegasXZ.pdf}
      The sets $\Omega_j$, $j = 0,1,2$. 
      \end{figuretext}
   \end{center}
\end{figure}
For each $y \in \R$ and $z_0 > 0$, we have
$$e^{2i(yz + \frac{4z^3}{3})} \in (\dot{E}^2 \cap E^\infty)(\Omega_1), \qquad
e^{-2i(yz + \frac{4z^3}{3})} \in (\dot{E}^2 \cap E^\infty)(\Omega_2).$$
Hence the function $m^P(y, z)$ defined by
$$m^P(y,z) = \begin{cases}
m^Z(s, y, z, z_0), & z \in \Omega_0,
	\\
m^Z(s, y, z, z_0)\begin{pmatrix} 1 & 0 \\ -s e^{2i(yz + \frac{4z^3}{3})} & 1 \end{pmatrix}, & z \in \Omega_1,
	\\
m^Z(s, y, z, z_0)\begin{pmatrix} 1 & s e^{-2i(yz + \frac{4z^3}{3})}  \\ 0 & 1 \end{pmatrix}, & z \in \Omega_2,
\end{cases}
$$
satisfies the RH problem (\ref{RHmP}) with jump data specified by $(s_1, s_2, s_3) = (s,0,-s)$ if and only if $m^Z$ satisfies the RH problem (\ref{RHmZ}). 
Since $s \in i\R$ and $|s| < 1$, the set $Y_{\mathcal{S}}$ in (\ref{mPasymptotics}) does not intersect $\R$ for $\mathcal{S} = (s,0,-s)$, see \cite{HM1980} and p. 51 of \cite{IN1986}. Thus (\ref{mZasymptotics}) follows from (\ref{mPasymptotics}). Equation (\ref{mZbounded}) follows from the properties of $m^P$. 
\proofend

\section{Another nonlinear steepest descent theorem}\nequation\label{steepsec2}
In the self-similar sector $\mathbb{S}$ defined in (\ref{sectorIVdef}), the asymptotics of the mKdV equation is characterized by the solution of a RH problem whose jump matrix has two critical points that merge at the origin as $t \to \infty$. In this section, we prove Theorem \ref{steepestdescentthIV} which provides an implementation of the nonlinear steepest descent method suitable for analyzing this type of situation. Before stating the theorem, we describe the set-up and make a number of assumptions.

As in Section \ref{similaritysec}, we assume the variables $\zeta$ and $k_0$ are given by
$$\zeta = x/t, \qquad k_0 = \sqrt{\zeta/12}.$$
Note that $0 < tk_0^3 < C$ in $\mathbb{S}$. Hence, as $(x,t) \to \infty$ within $\mathbb{S}$, the two stationary points $\pm k_0$ approach $0$ at least as fast as $t^{-1/3}$:
\begin{align}\label{k0merge}
0 < k_0 < C t^{-1/3} \quad \text{for} \quad (x,t) \in \mathbb{S}.
\end{align}
For each point $(x,t) \in \mathbb{S}$, we let $Y = Y(x,t) = k_0 Z$ denote the contour $Z$ defined in (\ref{Zdef}) scaled by the factor $k_0$ (see Figure \ref{Yfig}), i.e. $Y = Y_1 \cup Y_2 \cup Y_3$, where
\begin{align} \nonumber
&Y_1 = \bigl\{k_0 + ue^{\frac{i\pi}{6}}\, \big| \, 0 \leq u < \infty\bigr\} \cup \bigl\{-k_0 + ue^{\frac{5i\pi}{6}}\, \big| \, 0 \leq u < \infty\bigr\},  
	\\ \nonumber
&Y_2 = \bigl\{-k_0 + ue^{-\frac{5i\pi}{6}}\, \big| \, 0 \leq u < \infty\bigr\} \cup \bigl\{k_0 + ue^{-\frac{i\pi}{6}}\, \big| \, 0 \leq u < \infty\bigr\}, 
	\\ \label{Ydef}
& Y_3 = \bigl\{u\, \big| \, -k_0 \leq u \leq k_0 \bigr\}.
\end{align}
Let $\Gamma = \Gamma(x,t)$ be a family of oriented contours parametrized by $(x,t) \in \mathbb{S}$ and let $\hat{\Gamma} = \Gamma  \cup \{|k| = 1\}$ denote the union of $\Gamma$ with the unit circle oriented counterclockwise. Assume that, for each point $(x,t) \in \mathbb{S}$:
\begin{itemize}

\item[($\Gamma$1)] $\Gamma$ and $\hat{\Gamma}$ are Carleson jump contours up to reorientation of a subcontour.

\item[($\Gamma$2)] $\Gamma$ contains $Y$ as an oriented subcontour.

\item[($\Gamma$3)] $\Gamma$ is invariant as a set under the map $k \mapsto - \bar{k}$. Moreover, the orientation of $\Gamma$ is such that if $k$ traverses $\Gamma$ in the positive direction, then $-\bar{k}$ traverses $\Gamma$ in the negative direction.

\item[($\Gamma$4)] The point $\infty$ in the Riemann sphere can be approached nontangentially with respect to $\Gamma$.
\end{itemize}
Assume the operator $\mathcal{S}_{\hat{\Gamma}}$ defined in (\ref{Cauchysingulardef}) satisfies $\sup_{(x,t) \in \mathbb{S}} \|\mathcal{S}_{\hat{\Gamma}}\|_{\mathcal{B}(L^2(\hat{\Gamma}))} < \infty$.

Consider the following family of $L^2$-RH problems parametrized by the two parameters $(x,t) \in \mathbb{S}$:
\begin{align}\label{RHmIV}
\begin{cases} 
m(x, t, \cdot) \in I + \dot{E}^2(\C \setminus \Gamma), \\
m_+(x, t, k) = m_-(x, t, k) v(x, t, k) \quad \text{for a.e.} \ k \in \Gamma, 
\end{cases} 
\end{align}
where the jump matrix $v(x, t, k)$ satisfies
\begin{align}\label{winL1L2LinfIV}
&w(x, t,\cdot) := v(x, t,\cdot) - I \in L^1(\Gamma) \cap L^2(\Gamma) \cap L^\infty(\Gamma), \qquad (x,t) \in \mathbb{S},
	\\
&\det v(x, t, \cdot) = 1 \;\; \text{a.e. on $\Gamma$}, \qquad (x,t) \in \mathbb{S},
\end{align}
and
\begin{align}\label{vsymmIV}
  v(x,t,k) = \sigma_3 \overline{v(x, t, -\bar{k})} \sigma_3, \qquad (x,t) \in \mathbb{S}, \quad k \in \Gamma.
\end{align}

\begin{figure}
\begin{center}
\bigskip
 \begin{overpic}[width=.6\textwidth]{Z.pdf}
 \put(78,35.5){$Y_1$}
 \put(17,35.5){$Y_1$}
 \put(17,5){$Y_2$}
 \put(78,5){$Y_2$}
 \put(47,25){$Y_3$}
 \put(61,17){$k_0$}
 \put(33,17){$-k_0$}
 \end{overpic}
   \bigskip
   \begin{figuretext}\label{Yfig}
      The contour $Y = Y_1 \cup Y_2 \cup Y_3$.
            \end{figuretext}
   \end{center}
\end{figure}
Suppose
\begin{align}\label{wL12inftyIV}
\begin{cases} 
\|w(x,t,\cdot)\|_{L^1(\Gamma \setminus Y)} = O(t^{-\frac{2}{3}}), 
	\\
\|w(x,t,\cdot)\|_{L^\infty(\Gamma \setminus Y)} = O(t^{-\frac{1}{3}}),
\end{cases} \qquad t \to \infty, \quad (x,t) \in \mathbb{S},	
\end{align}
uniformly with respect to $x$. Moreover, suppose that for $k \in Y$ the jump matrix $v$ has the form
\begin{align}\label{vjumpIV}
v(x,t,k) = \begin{cases}
\begin{pmatrix} 1 & 0 \\ R_1(x, t, k) e^{t\Phi(\zeta, k)}  & 1  \end{pmatrix}, & k \in Y_1,
	\\
\begin{pmatrix} 1 & R_2(x, t, k) e^{-t\Phi(\zeta, k)} \\ 0 & 1 \end{pmatrix}, & k \in Y_2, 
	\\
\begin{pmatrix} 1 & -R_4(x,t, k) e^{-t\Phi(\zeta, k)} \\ 0 & 1 \end{pmatrix}
\begin{pmatrix} 1 & 0 \\ R_3(x,t, k) e^{t\Phi(\zeta, k)}  & 1  \end{pmatrix}, &  k \in Y_3,
\end{cases}
\end{align}
where:
\begin{itemize}
\item The phase $\Phi(\zeta, k)$ is given by
\begin{align}\label{PhiIV}
\Phi(\zeta, k) = -24ik_0^2k + 8ik^3.
\end{align}

\item There exist constants $s \in i\R$, $|s| < 1$, and $L > 0$ such that the functions $\{R_j(x,t, k)\}_1^4$ satisfy the inequalities
\begin{align} \label{RinequalitiesIV}
\begin{cases}
  |R_1(x,t, k) - s| \leq L |k| e^{\frac{t}{2}|\re \Phi(\zeta, k)|}, & k \in Y_1, 	\\
   |R_2(x,t, k) + s| \leq L |k| e^{\frac{t}{2}|\re \Phi(\zeta, k)|}, & k \in Y_2,	\\
   |R_3(x,t, k) - s| \leq L |k|, & k \in Y_3,	\\
   |R_4(x,t, k) + s| \leq L |k|, & k \in Y_3,
\end{cases} 
\end{align}
for all $(x,t) \in \mathbb{S}$ with $t$ sufficiently large.
\end{itemize}

\begin{theorem}[Nonlinear steepest descent]\label{steepestdescentthIV}
Under the above assumptions, the $L^2$-RH problem (\ref{RHmIV}) has a unique solution for all sufficiently large $t$ and this solution satisfies
\begin{align}\label{limlm12IV}
\ntlim_{k\to \infty} (k m(x,t,k))_{12}
= \frac{u^P\big(\frac{-x}{(3t)^{1/3}}; s, 0,-s\big)}{2(3t)^{\frac{1}{3}}}  +  O\big(t^{-\frac{2}{3}}\big), \qquad t \to \infty, \quad (x,t) \in \mathbb{S},
\end{align}
where the error term is uniform with respect to $x$ and $u^P(\cdot; s, 0,-s)$ denotes the solution of the Painlev\'e II equation (\ref{painleve2}) corresponding to $(s_1, s_2, s_3) = (s,0,-s)$ according to (\ref{painlevebijection}).
\end{theorem}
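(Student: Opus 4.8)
The plan is to collapse the two merging stationary points onto a bounded configuration by a $t$-dependent rescaling and then compare the rescaled RH problem directly to the Painlev\'e model problem of Theorem~\ref{mZth}. Put $\lambda = (3t)^{-1/3}$, $z_0 := k_0(3t)^{1/3}$, and $y := -x/(3t)^{1/3}$; by (\ref{k0merge}) the number $z_0$ stays in a bounded subset of $(0,\infty)$, and by the definition (\ref{sectorIVdef}) of $\mathbb{S}$ the number $y$ stays in a bounded interval, hence in a compact subset of $\R$. Set $\tilde m(x,t,z) := m(x,t,\lambda z)$. A direct computation using (\ref{PhiIV}) gives $t\Phi(\zeta,\lambda z) = 2i\bigl(yz+\tfrac{4z^3}{3}\bigr)$, so $\tilde m$ satisfies an $L^2$-RH problem on the rescaled contour $\lambda^{-1}\Gamma$ whose jump $\tilde v$, restricted to $\lambda^{-1}Y = z_0Z$, has exactly the structure of the model jump $v^Z(s,y,\cdot,z_0)$ of (\ref{RHmZ}) but with the functions $R_1,-R_2,R_3,-R_4$ in place of the constant $s$, and whose restriction to $\lambda^{-1}\Gamma\setminus z_0Z$ equals $I$ plus a perturbation that is small by (\ref{wL12inftyIV}). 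The symmetry (\ref{vsymmIV}) of $v$ is consistent with the symmetry of $v^Z$ recorded before Theorem~\ref{mZth}.

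Let $m^Z := m^Z(s,y,\cdot,z_0)$ be the solution of (\ref{RHmZ}), which exists, is unique, is uniformly bounded by (\ref{mZbounded}), satisfies $\det m^Z = 1$ (hence is invertible with bounded inverse), and obeys the expansion (\ref{mZasymptotics}). Define $\mathcal{E} := \tilde m\,(m^Z)^{-1}$. Then $\mathcal{E} - I \in \dot{E}^2(\C\setminus\lambda^{-1}\Gamma)$ and $\mathcal{E}$ solves an $L^2$-RH problem on $\lambda^{-1}\Gamma$ with jump
$$v_{\mathcal{E}} = \begin{cases} m^Z_-\,\tilde v\,(v^Z)^{-1}\,(m^Z_-)^{-1}, & \text{on } z_0Z,\\ m^Z\,\tilde v\,(m^Z)^{-1}, & \text{on } \lambda^{-1}\Gamma\setminus z_0Z.\end{cases}$$
Because $m^Z_\pm$, $(m^Z_\pm)^{-1}$ and $(v^Z)^{-1}$ are uniformly bounded, one has $|v_{\mathcal{E}} - I| \le C|\tilde v - v^Z|$ on $z_0Z$ and $|v_{\mathcal{E}}-I| \le C|\tilde v - I|$ on the remainder. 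On $\lambda^{-1}\Gamma\setminus z_0Z$ the estimate (\ref{wL12inftyIV}), after multiplying the $L^1$ part by the arclength-rescaling factor $\lambda^{-1}=(3t)^{1/3}$ (and interpolating for $L^2$), yields $O(t^{-1/3})$ in all of $L^1,L^2,L^\infty$. On $z_0Z$, the inequalities (\ref{RinequalitiesIV}) together with $|k|=\lambda|z| \le Ct^{-1/3}|z|$, the boundedness of $z_0$, and the decay of the exponentials $e^{\pm t\Phi}$ on the three rays of $Y$ (which absorbs the factors $e^{\frac{t}{2}|\re\Phi|}$ in (\ref{RinequalitiesIV})) give $\|\tilde v - v^Z\|_{L^1\cap L^2\cap L^\infty(z_0Z)} = O(t^{-1/3})$. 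Hence
\begin{align}\label{vEsmallplan}
\|v_{\mathcal{E}}-I\|_{L^1(\lambda^{-1}\Gamma)\cap L^2(\lambda^{-1}\Gamma)\cap L^\infty(\lambda^{-1}\Gamma)} = O\bigl(t^{-1/3}\bigr),\qquad t\to\infty,
\end{align}
uniformly in $x$. Since the Cauchy singular operator is invariant under rescaling of the contour, the assumed uniform bound on $\mathcal{S}_{\hat{\Gamma}}$ transfers to $\lambda^{-1}\Gamma$, so standard small-norm RH theory applies: $\mathcal{E}$ exists and is unique for all sufficiently large $t$, whence so do $\tilde m$ and $m$. (Uniqueness for (\ref{RHmIV}) also follows directly from $\det v = 1$.)

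It remains to read off the asymptotics. Write $\tilde m - I = (\mathcal{E} - I)(m^Z - I) + (\mathcal{E} - I) + (m^Z - I)$ and let $z\to\infty$ nontangentially with respect to $\lambda^{-1}\Gamma$. The product term contributes $0$ to $\ntlim_{z\to\infty} z(\tilde m - I)_{12}$ since $m^Z - I \to 0$; the term $\ntlim_{z\to\infty} z(m^Z - I)_{12}$ equals $\tfrac12 u^P(y;s,0,-s)$ by (\ref{mZasymptotics}); and $\ntlim_{z\to\infty} z(\mathcal{E} - I)_{12}$ is a Cauchy integral over $\lambda^{-1}\Gamma$ of a density bounded by $C\bigl(\|v_{\mathcal{E}}-I\|_{L^1} + \|v_{\mathcal{E}}-I\|_{L^2}^2\bigr)$, hence is $O(t^{-1/3})$ by (\ref{vEsmallplan}). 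Therefore, using $m(x,t,k) = \tilde m(x,t,\lambda^{-1}k)$,
$$\ntlim_{k\to\infty}(km(x,t,k))_{12} = \lambda\,\ntlim_{z\to\infty}(z\tilde m(x,t,z))_{12} = (3t)^{-1/3}\big(\tfrac12 u^P(y;s,0,-s) + O(t^{-1/3})\big),$$
which, with $y = -x/(3t)^{1/3}$, is precisely (\ref{limlm12IV}) with error term $O(t^{-2/3})$ uniform in $x$.

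The main obstacle is the uniform smallness estimate (\ref{vEsmallplan}). Establishing it requires handling three effects at once: that $z_0$ remains bounded (which rests on (\ref{k0merge})), that $|k|$ is of order $t^{-1/3}|z|$ near the merging stationary points, and that on each of $Y_1,Y_2,Y_3$ the exponential $e^{\pm t\Phi}$ decays fast enough to dominate the factor $e^{\frac{t}{2}|\re\Phi|}$ appearing in (\ref{RinequalitiesIV}); one must also ensure the arclength-rescaling factor $(3t)^{1/3}$ does not spoil the decay coming from (\ref{wL12inftyIV}). The other essential ingredient is Theorem~\ref{mZth} itself, which furnishes the global Painlev\'e-II parametrix. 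In contrast to Theorem~\ref{steepestdescentth}, there is no localization to small disks around the stationary points; the whole rescaled RH problem is compared to the model in a single step, and the order of the error term is inherited directly from (\ref{vEsmallplan}).
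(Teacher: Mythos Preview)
Your argument is correct and proves the theorem with the stated error, but it is organized differently from the paper's proof. The paper works in the original $k$-plane and uses the Painlev\'e solution as a \emph{local} parametrix: it sets $m_0(x,t,k)=m^Z\bigl(s,\tfrac{-x}{(3t)^{1/3}},(3t)^{1/3}k,(3t)^{1/3}k_0\bigr)$ only inside the disk $|k|<1$, defines $\hat m = m\,m_0^{-1}$ there and $\hat m=m$ outside, and thereby introduces the unit circle as an extra jump contour. The resulting RH problem on $\hat\Gamma=\Gamma\cup\{|k|=1\}$ is shown to be a small-norm problem via pointwise bounds on $\hat w=\hat v-I$ on $Y$ (the paper's Claim~1) and direct $L^p$ estimates (Claim~2); the leading contribution then comes from the residue of $m_0^{-1}-I$ on $|k|=1$. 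You instead rescale globally to the $z$-plane and use $m^Z$ as a parametrix on all of~$\C$, so no extra circle enters; the leading term arises directly from the large-$z$ expansion (\ref{mZasymptotics}) of $m^Z$, and the error is captured by the small-norm RH problem for $\mathcal E$. Both routes rest on Theorem~\ref{mZth} and yield the same $O(t^{-2/3})$ error; yours is a bit more streamlined, while the paper's local-parametrix formulation explains the appearance of $\hat\Gamma$ (and the assumed bound on $\mathcal S_{\hat\Gamma}$) in the hypotheses.

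Two small points deserve a word. First, the uniform bound you invoke is assumed for $\mathcal S_{\hat\Gamma}$, not $\mathcal S_{\Gamma}$; you should say explicitly that removing the smooth bounded arc $\{|k|=1\}$ from $\hat\Gamma$ preserves uniform $L^2$-boundedness of the Cauchy operator (a standard decomposition argument as in \cite{BK1997}), after which scaling invariance transfers the bound to $\lambda^{-1}\Gamma$. Second, your phrase ``the decay of the exponentials $e^{\pm t\Phi}$ on the three rays of $Y$'' is slightly inaccurate on $Y_3$, where $\Phi$ is purely imaginary and there is no decay; the estimate there comes instead from $|R_j(x,t,k)-(\pm s)|\le L|k|=L\lambda|z|$ together with the bounded length $2z_0$ of $z_0Z_3$. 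With these clarifications your proof is complete.
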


\begin{remark}\upshape
The conclusion of Theorem \ref{steepestdescentthIV} can be stated more explicitly as follows: There exist constants $T> 0$ and $K>0$ such that the solution $m(x, t,k)$ of (\ref{RHmIV}) exists, the nontangential limit $\ntlim_{k\to \infty}(k m(\zeta,t,k))_{12}$ exists, and the inequality
\begin{align}\nonumber
  \left| \ntlim_{k\to \infty} (k m(\zeta,t,k))_{12} - \frac{u^P\big(\frac{-x}{(3t)^{1/3}}; s, 0,-s\big)}{2(3t)^{\frac{1}{3}}}\right|
  \leq K t^{-\frac{2}{3}}
\end{align}
holds for all $(x, t) \in \mathbb{S}$ such that $t > T$.
\end{remark}

\proofbegin
Since $\det v = 1$, the solution of (\ref{RHmIV}) is unique if it exists. Let  $s \in i\R$, $|s| < 1$, be the constant in (\ref{RinequalitiesIV}). We henceforth suppose $t$ is so large that $k_0 < 1/2$.
Let $m^Z(s,y,z, z_0)$ denote the solution of the RH problem (\ref{RHmZ}) whose existence is ascertained by Theorem \ref{mZth}. We note that if
$$y = \frac{-x}{(3t)^{1/3}}, \qquad z = (3t)^{\frac{1}{3}} k,$$
then
$$2i\bigg(yz + \frac{4z^3}{3}\bigg) = t\Phi(\zeta,k).$$
Hence we define $m_0(x,t,k)$ by
\begin{align}\label{m0def6}
m_0(x, t, k) = m^Z\biggl(s, \frac{-x}{(3t)^{\frac{1}{3}}}, (3t)^{\frac{1}{3}} k, (3t)^{\frac{1}{3}} k_0 \biggr), \qquad |k| \leq 1.
\end{align}
Since $m_0(x,t, \cdot) \in (\dot{E}^2 \cap E^\infty)(\{|k| < 1\} \setminus Y)$, it follows that $m$ satisfies the $L^2$-RH problem (\ref{RHmIV}) iff the function $\hat{m}(x, t, k)$ defined by
\begin{align*}
\hat{m}(x,t,k) = \begin{cases}
m(x, t, k)m_0(x,t,k)^{-1}, & |k| < 1,\\
m(x, t, k), & \text{otherwise},
\end{cases}
\end{align*}
satisfies the $L^2$-RH problem
\begin{align}\label{RHmhatIV}
\begin{cases}
\hat{m}(x,t, \cdot) \in I + \dot{E}^2(\C \setminus \hat{\Gamma}), \\
\hat{m}_+(x,t,k) = \hat{m}_-(x, t, k) \hat{v}(x, t, k)  \quad \text{for a.e.} \ k \in \hat{\Gamma},
\end{cases}
\end{align}
where the jump matrix $\hat{v}$ is given by
\begin{align*}
\hat{v}(x, t, k) 
=  \begin{cases}
 m_{0-}(x, t, k) v(x, t, k) m_{0+}(x,t,k)^{-1}, & k \in \Gamma \cap \{|k| < 1\}, \\
m_0(x, t, k)^{-1}, & |k| = 1, \\
v(x, t, k),  & \text{otherwise}.
\end{cases}
\end{align*}
Let us write $Y_1 = Y_1^+ \cup Y_1^-$, where $Y_1^+$ and $Y_1^-$ denote the parts of $Y_1$ in the right and left half-planes, respectively. Similarly, we write $Y_2 = Y_2^+ \cup Y_2^-$, see Figure \ref{Yfig2}.

\begin{figure}
\begin{center}
\bigskip
 \begin{overpic}[width=.6\textwidth]{Z.pdf}
 \put(78,35.5){$Y_1^+$}
 \put(17,35.5){$Y_1^-$}
 \put(17,5){$Y_2^-$}
 \put(78,5){$Y_2^+$}
 \put(47,25){$Y_3$}
 \put(61,17){$k_0$}
 \put(33,17){$-k_0$}
 \end{overpic}
   \bigskip
   \begin{figuretext}\label{Yfig2}
     $Y_j^+$ and $Y_j^-$ denote the parts of $Y_j$, $j = 1,2$, in the right and left half-planes, respectively. 
            \end{figuretext}
   \end{center}
\end{figure}

\begin{claim}\label{claim1}
The function $\hat{w} = \hat{v} - I$ satisfies
\begin{align}\label{hatwestimateIV}
\hat{w}(x,t,k) = \begin{cases} O\bigl(|k| e^{-4 t |k - k_0|^3}\bigr), & k \in Y_1^+ \cup Y_2^+, \\
O\bigl(|k| e^{-4 t |k + k_0|^3}\bigr), & k \in Y_1^- \cup Y_2^-, \\
O(|k|), & k \in Y_3, 
\end{cases}
\qquad t \to \infty, \quad (x,t) \in \mathbb{S}, 
\end{align}
where the error term is uniform with respect to $x$ and $k$ in the given ranges.
\end{claim}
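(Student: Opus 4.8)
The plan is to compute the transformed jump matrix $\hat v$ explicitly on $Y$ and to use that $v$ differs from the jump matrix of the model solution $m_0$ only through the ``error'' quantities $R_1-s$, $R_2+s$, $R_3-s$, $R_4+s$. By construction, for $|k|<1$ the function $m_0$ in (\ref{m0def6}) is analytic off $Y$ with the jump obtained from $v^Z$ in (\ref{RHmZ}) under the substitution $z=(3t)^{1/3}k$, $z_0=(3t)^{1/3}k_0$, $y=-x/(3t)^{1/3}$; since $2i(yz+\frac{4z^3}{3})=t\Phi(\zeta,k)$, this jump, call it $v_0$, is precisely (\ref{vjumpIV}) with $(R_1,R_2,R_3,R_4)$ replaced by the constants $(s,-s,s,-s)$. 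From $m_{0+}=m_{0-}v_0$ on $Y\cap\{|k|<1\}$ we get $\hat v=m_{0-}vm_{0+}^{-1}=m_{0+}\,v_0^{-1}v\,m_{0+}^{-1}$ there, whereas $\hat v=v$ on the part of $Y$ outside the unit disk. Since $\det m_0=\det v^Z=1$ and $y=-x/(3t)^{1/3}$ ranges over a compact subset of $\R$ independent of $(x,t)\in\mathbb{S}$, the bound (\ref{mZbounded}) of Theorem~\ref{mZth} furnishes a constant $C$, independent of $(x,t)\in\mathbb{S}$ and of $k\in Y$, with $|m_{0\pm}^{\pm1}(x,t,k)|\leq C$; hence $|\hat w|\leq C^2|v_0^{-1}v-I|$ on $Y\cap\{|k|<1\}$ and it suffices to estimate $v_0^{-1}v-I$.

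Next I would carry out the $2\times2$ matrix multiplications. On $Y_1$ one gets $v_0^{-1}v-I=\begin{pmatrix}0&0\\(R_1-s)e^{t\Phi}&0\end{pmatrix}$ and on $Y_2$ one gets $v_0^{-1}v-I=\begin{pmatrix}0&(R_2+s)e^{-t\Phi}\\0&0\end{pmatrix}$. On $Y_3$ a slightly longer computation gives
\[
v_0^{-1}v-I=\begin{pmatrix} -R_3(R_4+s) & -(R_4+s)e^{-t\Phi}\\ \big(R_3-s+sR_3(R_4+s)\big)e^{t\Phi} & s(R_4+s)\end{pmatrix};
\]
the essential point is that each entry carries an explicit factor of one of $R_1-s$, $R_2+s$, $R_3-s$, $R_4+s$, with the remaining factors ($s$, $R_3$, $R_4$, $e^{\pm t\Phi}$) controlled. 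On $Y_3=\{u:|u|\leq k_0\}$ the phase $\Phi(\zeta,u)=i(8u^3-24k_0^2u)$ is purely imaginary, so $|e^{\pm t\Phi}|=1$; together with $|R_3-s|\leq L|k|$, $|R_4+s|\leq L|k|$ and the resulting boundedness of $R_3,R_4$ (valid once $t$ is large enough that $k_0<1/2$), this gives $v_0^{-1}v-I=O(|k|)$ on $Y_3$, which is the claimed estimate there.

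On $Y_1$ and $Y_2$ the additional ingredient is the behaviour of $\re\Phi$ on the rays. Since $\Phi$ is cubic, the exact identities $\Phi(\zeta,k)=\Phi(\zeta,k_0)+24ik_0(k-k_0)^2+8i(k-k_0)^3$ and $\Phi(\zeta,k)=\Phi(\zeta,-k_0)-24ik_0(k+k_0)^2+8i(k+k_0)^3$ hold, with $\Phi(\zeta,\pm k_0)\in i\R$. Evaluating on each of the four rays $Y_1^\pm,Y_2^\pm$ one checks that the off-diagonal exponential occurring in $v$ there — namely $e^{t\Phi}$ on $Y_1$ and $e^{-t\Phi}$ on $Y_2$ — satisfies $|e^{\pm t\Phi}|=e^{-t|\re\Phi|}$ with
\[
|\re\Phi(\zeta,k)|\geq 8|k-k_0|^3 \ \text{ on } Y_1^+\cup Y_2^+, \qquad |\re\Phi(\zeta,k)|\geq 8|k+k_0|^3 \ \text{ on } Y_1^-\cup Y_2^-.
\]
Feeding this and (\ref{RinequalitiesIV}) into the off-diagonal entries above gives, e.g.\ on $Y_1^+\cup Y_2^+$ and for $|k|<1$,
\[
|v_0^{-1}v-I|\leq L|k|\,e^{\frac{t}{2}|\re\Phi|}\,e^{-t|\re\Phi|}=L|k|\,e^{-\frac{t}{2}|\re\Phi|}\leq L|k|\,e^{-4t|k-k_0|^3},
\]
and similarly with $|k+k_0|$ on $Y_1^-\cup Y_2^-$; multiplying by $C^2$ yields (\ref{hatwestimateIV}) on the part of $Y_1\cup Y_2$ inside the unit disk. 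On the remaining tails ($|k|\geq1$) one has $\hat w=v-I$ directly, and the same phase bound together with $|R_j|\leq|s|+L|k|e^{\frac{t}{2}|\re\Phi|}$ yields a strictly stronger estimate, using $e^{-8t|k\mp k_0|^3}\leq|k|e^{-4t|k\mp k_0|^3}$ for $|k|\geq1$. All constants being independent of $x$ and $k$, the error terms are uniform as asserted.

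I do not anticipate a genuine obstacle; the two places requiring care are (i) performing the $Y_3$ matrix algebra correctly and observing that every entry is proportional to one of the differences $R_j\mp s$, and (ii) checking on each of the four rays $Y_1^\pm,Y_2^\pm$ that the exponential appearing in $v$ is decaying and, after absorbing the factor $e^{\frac{t}{2}|\re\Phi|}$ coming from (\ref{RinequalitiesIV}), still dominates with the stated cubic rate — which is exactly where the explicit cubic expansion of $\Phi$ about $\pm k_0$ is used.
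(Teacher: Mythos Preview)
Your proof is correct and follows essentially the same approach as the paper: you use the uniform boundedness of $m_{0\pm}^{\pm1}$ from (\ref{mZbounded}), the cubic phase identity expanded about $\pm k_0$ to get $|\re\Phi|\geq 8|k\mp k_0|^3$ on the rays, and the inequalities (\ref{RinequalitiesIV}) to control the differences $R_j\mp s$. The only cosmetic difference is that you package the error as $m_{0+}(v_0^{-1}v-I)m_{0+}^{-1}$ whereas the paper writes it as $m_{0-}(v-v_0)m_{0+}^{-1}$; these are identical, and the paper's $Y_3$ computation of $u=v-v_0$ differs from yours only by a harmless left multiplication by $v_0$.
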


{\it Proof of Claim \ref{claim1}.}
We give the proof for $k \in Y_1^+ \cup Y_2^+ \cup Y_3$; the proof for $k \in Y_1^- \cup Y_2^-$ is similar.

The definition (\ref{PhiIV}) of $\Phi$ implies 
$$\re \Phi(\zeta, k) = \mp (8r^3 + 12\sqrt{3}k_0r^2), \qquad k = k_0 + r e^{\pm \frac{i\pi}{6}}, \quad r > 0.$$
Hence
\begin{align} \label{RePhiZ}
\begin{cases}
 \re \Phi(\zeta,k) \leq -8 |k - k_0|^3, \qquad &  k \in Y_1^+,
    	\\ 
 \re \Phi(\zeta,k) \geq 8 |k - k_0|^3, & k \in Y_2^+,
\end{cases} \quad (x,t) \in \mathbb{S}.
\end{align}
If $k \in Y_1^+$ and $|k| > 1$, then only the $(21)$ entry of $\hat{w}$ is nonzero and equations (\ref{RinequalitiesIV}) and (\ref{RePhiZ}) yield
\begin{align} \nonumber
& |\hat{w}_{21}(x,t,k)| = |R_1(x,t, k) | e^{-t|\re \Phi(\zeta, k)|} \leq (L |k|e^{\frac{t}{2}|\re \Phi(\zeta, k)|} + |s|)e^{-t|\re \Phi(\zeta, k)|}  
	\\
&\leq (L |k| + |s|)e^{-4 t |k-k_0|^3}  \leq C|k| e^{-4 t |k-k_0|^3}, \qquad (x,t) \in \mathbb{S}, \quad  k \in Y_1, \quad |k| > 1.
\end{align}
A similar argument applies when $k \in Y_2^+$. This proves (\ref{hatwestimateIV}) for $k \in Y_1^+ \cup Y_2^+$ with $|k| > 1$. 

It remains to consider the case $|k| < 1$. In this case, we have
$$\hat{w}(x, t,k)
= m_{0-}(x, t, k)  u(x,t,k) m_{0+}(x,t,k)^{-1}, \qquad k \in Y, \quad |k| < 1,$$
where
$$u(x,t,k) := v(x, t, k) - v^Z\biggl(s, \frac{-x}{(3t)^{\frac{1}{3}}}, (3t)^{\frac{1}{3}} k, (3t)^{\frac{1}{3}} k_0\biggr).$$
In view of equation (\ref{mZbounded}), $m_{0+}(x, t, k)$ and $m_{0-}(x, t, k)$ remain uniformly bounded for $(x,t) \in \mathbb{S}$ and $k \in Y$. Therefore, it is enough to prove that 
\begin{align}\label{uboundIV}
u(x,t,k) = \begin{cases}
O\bigl(|k| e^{-4 t |k - k_0|^3}\bigr), & k \in Y_1^+ \cup Y_2^+,
	\\
O(|k|), & k \in Y_3,	
\end{cases}\quad
 t \to \infty, \quad (x,t) \in \mathbb{S}, \quad |k| < 1, \end{align}
uniformly with respect to $x$ and $k$. Equation (\ref{uboundIV}) follows from the assumptions (\ref{vjumpIV})-(\ref{RinequalitiesIV}) on $v$. Indeed, for $k \in Y_1^+$, only the $(21)$ entry of $u(x,t,k)$ is nonzero and equations (\ref{RinequalitiesIV}) and (\ref{RePhiZ}) yield
\begin{align} \nonumber
|(u(x, t, k))_{21}| = &\; |R_1(x,t, k) -s| e^{-t|\re \Phi(\zeta, k)|}
\leq L |k| e^{-4 t |k-k_0|^3},
	\\ \label{ueq1}
& \hspace{4cm}  (x,t) \in \mathbb{S}, \quad k \in Y_1^+, \quad |k| < 1.
\end{align}
This shows (\ref{uboundIV}) for $k \in Y_1^+$ with $|k| < 1$. The proof for $k \in Y_2^+$ with $|k| < 1$  is similar. 

For $k \in Y_3$, we have $|e^{\pm t\Phi(\zeta, k)} | = 1$ and
$$u(x,t,k)
= \begin{pmatrix} - R_3(x,t,k)R_4(x,t,k) - s^2 & -(R_4(x,t, k) + s) e^{-t\Phi(\zeta, k)} \\ 
(R_3(x, t, k) - s) e^{t\Phi(\zeta, k)} & 0 \end{pmatrix}.$$
Hence, by (\ref{RinequalitiesIV}),
\begin{align*}
& |(u(x,t,k))_{11}| \leq |R_3 - s| |R_4| + |s| |R_4 +s| \leq C |k|,
	\\
& |(u(x,t,k))_{21}| \leq |R_3 - s| \leq L |k|, \qquad |(u(x,t,k))_{12}| \leq |R_4 + s| \leq L |k|,
\end{align*}
for $k\in Y_3$ and $(x,t) \in \mathbb{S}$. This shows (\ref{uboundIV}) also for $k \in Y_3$ and completes the proof of the claim.
\proofendcontinue

\begin{claim}\label{claim2}
We have
\begin{align}\label{hatwLinftyIV}
& \|\hat{w}(x, t, \cdot)\|_{L^\infty(\hat{\Gamma})} = O\big(t^{-\frac{1}{3}}\big), && t \to \infty, \quad (x,t) \in \mathbb{S},
	\\\label{hatwL2IV}
& \|\hat{w}(x, t, \cdot)\|_{L^2(\hat{\Gamma})} = O\big(t^{-\frac{1}{3}}\big), && t \to \infty, \quad (x,t) \in \mathbb{S},
\end{align}
and
\begin{align}\label{hatwL1IV}
\|\hat{w}(x, t, \cdot)\|_{L^1(Y)} =  O\big(t^{-\frac{2}{3}}\big), \qquad t \to \infty, \quad (x,t) \in \mathbb{S},
\end{align}
where the error terms are uniform with respect to $x$.
\end{claim}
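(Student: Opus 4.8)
The plan is to decompose $\hat\Gamma = \Gamma \cup \{|k| = 1\}$ (up to a finite set) into four pieces and estimate $\hat w = \hat v - I$ on each separately: the scaled contour $Y$; the part $(\Gamma\setminus Y)\cap\{|k|<1\}$; the part $(\Gamma\setminus Y)\cap\{|k|>1\}$; and the unit circle.

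On $(\Gamma\setminus Y)\cap\{|k|>1\}$ the matrix $\hat v$ equals $v$, so there $\hat w = w$ and the required bounds follow immediately from the hypothesis (\ref{wL12inftyIV}), together with the interpolation $\|w\|_{L^2} \le \|w\|_{L^1}^{1/2}\|w\|_{L^\infty}^{1/2} = O(t^{-1/2})$. On $(\Gamma\setminus Y)\cap\{|k|<1\}$ the function $m_0(x,t,\cdot)$ has no jump (it is analytic off $Y$) and, by (\ref{mZbounded}), is uniformly bounded together with its inverse; hence $\hat w = m_0 w m_0^{-1} = O(|w|)$ entrywise and obeys the same estimates as $w$. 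On $\{|k| = 1\}$ we have $\hat w = m_0^{-1} - I$; here the argument $y = -x/(3t)^{1/3}$ stays in a fixed compact subset of $\R$ for $(x,t)\in\mathbb{S}$ while $|z| = (3t)^{1/3}|k| = (3t)^{1/3}\to\infty$ uniformly, so the large-$z$ expansion of Theorem \ref{mZth} gives $m_0 = I + O(t^{-1/3})$ and thus $\hat w = O(t^{-1/3})$ uniformly in $x$ and in $\arg k$. Since the unit circle has finite length, its contribution to the $L^1$, $L^2$, and $L^\infty$ norms of $\hat w$ is $O(t^{-1/3})$ in each case.

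It remains to estimate $\hat w$ on $Y$, where Claim \ref{claim1} supplies the pointwise bounds (\ref{hatwestimateIV}). On $Y_3$ one has $|k|\le k_0$, so $|\hat w| = O(k_0) = O(t^{-1/3})$ by (\ref{k0merge}), and since $Y_3$ has length $2k_0$, $\|\hat w\|_{L^1(Y_3)} = O(k_0^2) = O(t^{-2/3})$. On each ray making up $Y_1^{\pm}$ and $Y_2^{\pm}$ we parametrize by the distance $r\ge 0$ to the relevant stationary point $\pm k_0$, so that $|k|\le k_0 + r$ and, by Claim \ref{claim1}, $|\hat w| \le C(k_0 + r)e^{-4tr^3}$. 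Using $\sup_{r\ge 0} r e^{-4tr^3} = O(t^{-1/3})$ and $k_0 = O(t^{-1/3})$ gives $\|\hat w\|_{L^\infty(Y)} = O(t^{-1/3})$, while $\int_0^\infty e^{-4tr^3}\,dr = O(t^{-1/3})$ and $\int_0^\infty r e^{-4tr^3}\,dr = O(t^{-2/3})$ give $\|\hat w\|_{L^1(Y)} = O(k_0 t^{-1/3} + t^{-2/3}) = O(t^{-2/3})$. The $L^2$ bound on $Y$ then follows from $\|\hat w\|_{L^2(Y)}^2 \le \|\hat w\|_{L^\infty(Y)}\,\|\hat w\|_{L^1(Y)} = O(t^{-1})$. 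Summing the four contributions yields (\ref{hatwLinftyIV}), (\ref{hatwL2IV}), and (\ref{hatwL1IV}), the $L^2$ estimate being dominated by the unit-circle term.

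I do not expect a real obstacle here: the substantive inputs are Claim \ref{claim1}, the uniform bound (\ref{mZbounded}), the large-$z$ asymptotics of Theorem \ref{mZth} (which controls the new jump introduced on $\{|k|=1\}$), and the hypothesis (\ref{wL12inftyIV}); everything else is a routine scaling analysis of the cubic-exponential integrals together with careful bookkeeping to keep all estimates uniform in $x$, equivalently in $k_0\in(0, Ct^{-1/3})$. The one point requiring slight care is that the rays of $Y$ extend to infinity, so the decay $e^{-4t|k\mp k_0|^3}$ must be integrated out to $\infty$; since it is super-Gaussian in the ray parameter this causes no difficulty and contributes nothing beyond the stated orders.
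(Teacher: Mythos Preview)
Your proof is correct and follows essentially the same approach as the paper: decompose $\hat\Gamma$ into $\Gamma\setminus Y$, the unit circle, and $Y$, then use (\ref{wL12inftyIV}) together with the boundedness of $m_0$, the large-$z$ asymptotics (\ref{mZasymptotics}), and the pointwise estimate of Claim~\ref{claim1}, respectively. Your treatment of the $Y$-contribution is slightly more streamlined than the paper's (you use $\sup_{r\ge0} r e^{-4tr^3}=O(t^{-1/3})$ directly and obtain the $L^2$ bound by interpolation, whereas the paper splits at $|k|\sim t^{-1/3}$ and computes the $L^2$ integral explicitly), but the substance is the same.
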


{\it Proof of Claim \ref{claim2}.}
Let $p = 2$ or $p=\infty$. Then
\begin{align}\label{whatnormIV}
\|\hat{w}(x, t, \cdot)\|_{L^p(\hat{\Gamma})} 
\leq &\; \|\hat{w}(x, t, \cdot)\|_{L^p(\Gamma \setminus Y)} + \|m_0(x, t, \cdot)^{-1} - I \|_{L^p(|k| = 1)} 
+ \|\hat{w}(x, t, \cdot)\|_{L^p(Y)}.
\end{align}
On $\Gamma\setminus Y$, the matrix $\hat{w}$ is given by either $w$ or $m_0wm_0^{-1}$.
Hence $\|\hat{w}(x, t, \cdot)\|_{L^\infty(\Gamma \setminus Y)} = O(t^{-\frac{1}{3}})$ and 
$$\|\hat{w}(x, t, \cdot)\|_{L^2(\Gamma \setminus Y)} \leq \sqrt{\|\hat{w}(x, t, \cdot)\|_{L^\infty(\Gamma \setminus Y)} \|\hat{w}(x, t, \cdot)\|_{L^1(\Gamma \setminus Y)}} = O(t^{-\frac{1}{2}})$$ 
by the assumption (\ref{wL12inftyIV}) and the boundedness of $m_0$.
Moreover, by (\ref{mZasymptotics}), 
\begin{align}\label{m0invminusI}
&\|m_0(x, t, k)^{-1} - I \|_{L^p(|k| = 1)}   = 
\biggl \|m^Z\biggl(s, \frac{-x}{(3t)^{\frac{1}{3}}}, (3t)^{\frac{1}{3}} k, (3t)^{\frac{1}{3}} k_0\biggr)^{-1}  - I \biggr\|_{L^p(|k| = 1)}  = O(t^{-\frac{1}{3}}), 
\end{align}
uniformly in $x$. 
By (\ref{k0merge}), we can choose a constant $K > 0$ such that $2 k_0 \leq K t^{-1/3}$ for all $(x,t) \in \mathbb{S}$. 
Then, for $|k| > K t^{-1/3}$, we have
\begin{align}\label{hatwKest1}
  |\hat{w}(x,t,k)| & \leq \frac{C}{4t|k|^2} 
  \leq C t^{-\frac{1}{3}},  \qquad  |k| > K t^{-\frac{1}{3}}, \quad k \in Y,
\end{align}
because equation (\ref{hatwestimateIV}) implies
\begin{align*}
  |\hat{w}(x,t,k)| & \leq C |k| e^{-4 t |k - k_0|^3} 
  \leq C |k| \frac{4t|k-k_0|^3 e^{-4t|k - k_0|^3}}{4t|k-k_0|^3} \leq \frac{C|k|}{4t|k - k_0|^3} 
  	\\
&  \leq \frac{C}{4t|k|^2} 
  \leq C t^{-\frac{1}{3}},  \qquad  |k| > K t^{-\frac{1}{3}}, \quad k \in Y_1^+ \cup Y_2^+,
\end{align*}
and a similar argument applies for $k \in Y_1^- \cup Y_2^-$.
On the other hand, if $|k| \leq K t^{-1/3}$, equation (\ref{hatwestimateIV}) implies
\begin{align}\label{hatwKest2}
|\hat{w}(x,t,k)| \leq C |k| \leq C t^{-\frac{1}{3}}, \qquad |k| \leq K t^{-\frac{1}{3}}, \quad k \in Y.
\end{align}
Using equations (\ref{hatwKest1}) and (\ref{hatwKest2}), we can estimate the third term on the right-hand side of (\ref{whatnormIV}) as follows
\begin{align*}
 \|\hat{w}(x, t, \cdot)\|_{L^\infty(Y)} & = O(t^{-\frac{1}{3}}),
	\\
 \|\hat{w}(x, t, \cdot)\|_{L^2(Y)}
&  \leq C\sqrt{\int_{Y \cap \{|k| > K t^{-\frac{1}{3}}\}}\Big( \frac{1}{t|k|^2}\Big)^2 d|k| + \int_{Y \cap \{|k| \leq K t^{-\frac{1}{3}}\}} |k|^2 d|k|}
	\\
& \leq C \sqrt{ \frac{1}{t^2} \int_{t^{-\frac{1}{3}}}^\infty u^{-4} du + \int_0^{t^{-\frac{1}{3}}}  u^2 du}
= O(t^{-\frac{1}{2}}).
\end{align*}
This proves (\ref{hatwLinftyIV}) and (\ref{hatwL2IV}).

In order to prove (\ref{hatwL1IV}), we note that (\ref{hatwestimateIV}) implies 
\begin{align}\nonumber
  \|\hat{w}(x, t, \cdot)\|_{L^1(Y)} 
\leq &\; C \int_{Y_1 \cup Y_2} |\hat{w}(x,t,k)| |dk|  + C \int_{Y_3} |\hat{w}(x,t,k)| |dk|
	\\\nonumber
\leq  &\; C \int_0^\infty (u + k_0) e^{- 4 t u^3} du + C \int_{-k_0}^{k_0} |u| du
	\\	 \label{hatwL1estimateIV}
=  &\; 
  C \frac{\Gamma(\frac{2}{3})}{6 2^{1/3}}t^{-\frac{2}{3}}
+ C \frac{k_0 \Gamma (\frac{4}{3})}{2^{2/3}}t^{-\frac{1}{3}}
    + C k_0^2
\leq C t^{-\frac{2}{3}}, \qquad (x,t) \in \mathbb{S}.
\end{align}
This proves (\ref{hatwL1IV}).
\proofendcontinue

Let $\hat{\mathcal{C}}$ denote the Cauchy operator associated with $\hat{\Gamma}$:
$$(\hat{\mathcal{C}} f)(z) = \frac{1}{2\pi i} \int_{\hat{\Gamma}} \frac{f(s)}{s - z} ds, \qquad z \in \C \setminus \hat{\Gamma}.$$
We denote the nontangential boundary values of $\hat{\mathcal{C}}f$ from the left and right sides of $\hat{\Gamma}$ by $\hat{\mathcal{C}}_+ f$ and $\hat{\mathcal{C}}_-f$ respectively. 
We define $\hat{\mathcal{C}}_{\hat{w}}: L^2(\hat{\Gamma}) + L^\infty(\hat{\Gamma}) \to L^2(\hat{\Gamma})$ by $\hat{\mathcal{C}}_{\hat{w}}f = \hat{\mathcal{C}}_-(f \hat{w})$.

\begin{claim}\label{claim3}
There exists a $T > 0$ such that $I - \hat{\mathcal{C}}_{\hat{w}(x, t, \cdot)} \in \mathcal{B}(L^2(\hat{\Gamma}))$ is invertible for all $(x,t) \in \mathbb{S}$ with $t > T$.
\end{claim}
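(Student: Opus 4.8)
The plan is to show that the operator norm of $\hat{\mathcal{C}}_{\hat{w}(x,t,\cdot)}$ on $L^2(\hat{\Gamma})$ becomes strictly less than one once $t$ is large, so that $I - \hat{\mathcal{C}}_{\hat{w}}$ is invertible via a Neumann series. First I would recall that the left and right Cauchy boundary operators associated with $\hat{\Gamma}$ satisfy $\hat{\mathcal{C}}_+ - \hat{\mathcal{C}}_- = I$ and $\hat{\mathcal{C}}_+ + \hat{\mathcal{C}}_- = \mathcal{S}_{\hat{\Gamma}}$ on $L^2(\hat{\Gamma})$, so that $\hat{\mathcal{C}}_- = \frac{1}{2}(\mathcal{S}_{\hat{\Gamma}} - I)$ and hence
$$\|\hat{\mathcal{C}}_-\|_{\mathcal{B}(L^2(\hat{\Gamma}))} \leq \frac{1}{2}\bigl(1 + \|\mathcal{S}_{\hat{\Gamma}}\|_{\mathcal{B}(L^2(\hat{\Gamma}))}\bigr).$$
By the standing assumption $\sup_{(x,t)\in\mathbb{S}}\|\mathcal{S}_{\hat{\Gamma}}\|_{\mathcal{B}(L^2(\hat{\Gamma}))} < \infty$, the right-hand side is bounded by a constant $C_0$ that is independent of $(x,t) \in \mathbb{S}$.

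Next I would use the trivial estimate, valid for every $f \in L^2(\hat{\Gamma})$,
$$\|\hat{\mathcal{C}}_{\hat{w}} f\|_{L^2(\hat{\Gamma})} = \|\hat{\mathcal{C}}_-(f \hat{w})\|_{L^2(\hat{\Gamma})} \leq \|\hat{\mathcal{C}}_-\|_{\mathcal{B}(L^2(\hat{\Gamma}))}\, \|\hat{w}\|_{L^\infty(\hat{\Gamma})}\, \|f\|_{L^2(\hat{\Gamma})},$$
so that $\|\hat{\mathcal{C}}_{\hat{w}(x,t,\cdot)}\|_{\mathcal{B}(L^2(\hat{\Gamma}))} \leq C_0 \|\hat{w}(x,t,\cdot)\|_{L^\infty(\hat{\Gamma})}$. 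By Claim \ref{claim2} we have $\|\hat{w}(x,t,\cdot)\|_{L^\infty(\hat{\Gamma})} = O(t^{-1/3})$ uniformly with respect to $x$, hence there exists $T > 0$ such that $C_0 \|\hat{w}(x,t,\cdot)\|_{L^\infty(\hat{\Gamma})} \leq \frac{1}{2}$ for all $(x,t) \in \mathbb{S}$ with $t > T$. For such $(x,t)$, the Neumann series $\sum_{n=0}^\infty \hat{\mathcal{C}}_{\hat{w}(x,t,\cdot)}^{\,n}$ converges in $\mathcal{B}(L^2(\hat{\Gamma}))$ and furnishes the inverse of $I - \hat{\mathcal{C}}_{\hat{w}(x,t,\cdot)}$, with the uniform bound $\|(I - \hat{\mathcal{C}}_{\hat{w}(x,t,\cdot)})^{-1}\|_{\mathcal{B}(L^2(\hat{\Gamma}))} \leq 2$, which will be convenient when estimating $\hat{m}$ later.

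There is no genuine obstacle here; the argument is soft. The only point requiring attention is uniformity: both the bound on $\|\hat{\mathcal{C}}_-\|$ (coming from the hypothesis on $\mathcal{S}_{\hat{\Gamma}}$) and the decay $\|\hat{w}(x,t,\cdot)\|_{L^\infty(\hat{\Gamma})} \to 0$ (coming from Claim \ref{claim2}) must be uniform in $x$, so that a single threshold $T$ works for all $(x,t) \in \mathbb{S}$ with $t > T$; this is exactly what those two inputs provide.
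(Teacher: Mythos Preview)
Your proof is correct and follows essentially the same approach as the paper: bound $\|\hat{\mathcal{C}}_-\|_{\mathcal{B}(L^2(\hat{\Gamma}))}$ uniformly via the Sokhotski--Plemelj relation $\hat{\mathcal{C}}_- = \tfrac{1}{2}(\mathcal{S}_{\hat{\Gamma}} - I)$ and the assumed uniform bound on $\mathcal{S}_{\hat{\Gamma}}$, then combine with $\|\hat{w}\|_{L^\infty(\hat{\Gamma})} = O(t^{-1/3})$ from Claim~\ref{claim2} to make $\|\hat{\mathcal{C}}_{\hat{w}}\|_{\mathcal{B}(L^2(\hat{\Gamma}))}$ small. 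Your explicit Neumann series and the resulting uniform bound $\|(I - \hat{\mathcal{C}}_{\hat{w}})^{-1}\| \leq 2$ are a slight elaboration, but the argument is the same.
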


{\it Proof of Claim \ref{claim3}.}
The assumption that $\mathcal{S}_{\hat{\Gamma}}$ is uniformly bounded on $L^2(\hat{\Gamma})$ together with the Sokhotski-Plemelj formula $\hat{\mathcal{C}}_- = \frac{1}{2}(-I + \mathcal{S}_{\hat{\Gamma}})$ show that $\sup_{(x,t) \in \mathbb{S}} \|\hat{\mathcal{C}}_-\|_{\mathcal{B}(L^2(\hat{\Gamma}))} < \infty$. 
Thus, by (\ref{hatwLinftyIV}),
\begin{align}\label{ChatwnormIV}
\|\hat{\mathcal{C}}_{\hat{w}}\|_{\mathcal{B}(L^2(\hat{\Gamma}))} \leq C \|\hat{w}\|_{L^\infty(\hat{\Gamma})}  
= O(t^{-\frac{1}{3}}), \qquad t \to \infty, \quad (x,t) \in \mathbb{S},
\end{align}
uniformly with respect to $x$. This proves the claim. 
\proofendcontinue

In view of Claim \ref{claim3}, we may define the $2 \times 2$-matrix valued function $\hat{\mu}(x, t, k)$ whenever $t > T$ by
\begin{align}\label{hatmudefIV}
\hat{\mu} = I + (I - \hat{\mathcal{C}}_{\hat{w}})^{-1}\hat{\mathcal{C}}_{\hat{w}}I  \in I + L^2(\hat{\Gamma}).
\end{align}

\begin{claim}\label{claim4}
The function $\hat{\mu}(x, t, k)$ satisfies
\begin{align}\label{muhatestimateIV}
\|\hat{\mu}(x,t,\cdot) - I\|_{L^2(\hat{\Gamma})} = O\big(t^{-\frac{1}{3}}\big), \qquad t \to \infty, \quad (x,t) \in \mathbb{S},
\end{align}
where the error term is uniform with respect to $x$.
\end{claim}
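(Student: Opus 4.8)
\textbf{Proof proposal for Claim \ref{claim4}.}
The plan is to read off the estimate directly from the definition (\ref{hatmudefIV}) of $\hat{\mu}$ together with the bounds already established in Claims \ref{claim2} and \ref{claim3}. Writing
$$\hat{\mu}(x,t,\cdot) - I = (I - \hat{\mathcal{C}}_{\hat{w}})^{-1}\hat{\mathcal{C}}_{\hat{w}}I \in L^2(\hat{\Gamma}),$$
it suffices to control the operator norm of $(I - \hat{\mathcal{C}}_{\hat{w}})^{-1}$ and the $L^2$-norm of the single function $\hat{\mathcal{C}}_{\hat{w}}I$.

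First I would handle the resolvent. By (\ref{ChatwnormIV}) we have $\|\hat{\mathcal{C}}_{\hat{w}}\|_{\mathcal{B}(L^2(\hat{\Gamma}))} = O(t^{-1/3})$ uniformly in $x$, so after possibly enlarging the constant $T$ of Claim \ref{claim3} we may assume $\|\hat{\mathcal{C}}_{\hat{w}}\|_{\mathcal{B}(L^2(\hat{\Gamma}))} \leq 1/2$ for all $(x,t) \in \mathbb{S}$ with $t > T$. The Neumann series then gives
$$\|(I - \hat{\mathcal{C}}_{\hat{w}})^{-1}\|_{\mathcal{B}(L^2(\hat{\Gamma}))} \leq \frac{1}{1 - \|\hat{\mathcal{C}}_{\hat{w}}\|_{\mathcal{B}(L^2(\hat{\Gamma}))}} \leq 2, \qquad (x,t) \in \mathbb{S}, \quad t > T,$$
with the bound uniform in $x$.

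Next I would estimate $\hat{\mathcal{C}}_{\hat{w}}I = \hat{\mathcal{C}}_-(\hat{w})$. Since $\mathcal{S}_{\hat{\Gamma}}$ is uniformly bounded on $L^2(\hat{\Gamma})$, the Sokhotski--Plemelj identity $\hat{\mathcal{C}}_- = \tfrac12(-I + \mathcal{S}_{\hat{\Gamma}})$ shows $\sup_{(x,t)\in\mathbb{S}}\|\hat{\mathcal{C}}_-\|_{\mathcal{B}(L^2(\hat{\Gamma}))} < \infty$, hence
$$\|\hat{\mathcal{C}}_{\hat{w}}I\|_{L^2(\hat{\Gamma})} = \|\hat{\mathcal{C}}_-(\hat{w})\|_{L^2(\hat{\Gamma})} \leq \|\hat{\mathcal{C}}_-\|_{\mathcal{B}(L^2(\hat{\Gamma}))} \|\hat{w}(x,t,\cdot)\|_{L^2(\hat{\Gamma})} = O(t^{-1/3})$$
as $t \to \infty$, uniformly in $x$, where the last step is exactly (\ref{hatwL2IV}) of Claim \ref{claim2}. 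Combining the two displays yields
$$\|\hat{\mu}(x,t,\cdot) - I\|_{L^2(\hat{\Gamma})} \leq \|(I - \hat{\mathcal{C}}_{\hat{w}})^{-1}\|_{\mathcal{B}(L^2(\hat{\Gamma}))}\,\|\hat{\mathcal{C}}_{\hat{w}}I\|_{L^2(\hat{\Gamma})} = O(t^{-1/3}), \qquad t \to \infty,$$
uniformly with respect to $x$, which is (\ref{muhatestimateIV}). There is no real obstacle here: the argument is a one-line consequence of the uniform invertibility of $I - \hat{\mathcal{C}}_{\hat{w}}$ and the $L^2$-smallness of $\hat{w}$; the only point requiring a modicum of care is that every constant appearing (the operator norm of $\hat{\mathcal{C}}_-$, the threshold $T$, the Neumann bound) is independent of $x$, which is guaranteed by the uniform hypotheses on $\mathcal{S}_{\hat{\Gamma}}$ and by the uniformity already built into Claims \ref{claim2} and \ref{claim3}.
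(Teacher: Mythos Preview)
Your proof is correct and follows essentially the same approach as the paper: bound the resolvent $(I - \hat{\mathcal{C}}_{\hat{w}})^{-1}$ via the Neumann series using (\ref{ChatwnormIV}), bound $\hat{\mathcal{C}}_{\hat{w}}I = \hat{\mathcal{C}}_-(\hat{w})$ by the uniform $L^2$-boundedness of $\hat{\mathcal{C}}_-$ together with (\ref{hatwL2IV}), and combine. The only difference is cosmetic---you spell out the uniformity in $x$ a bit more explicitly than the paper does.
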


{\it Proof of Claim \ref{claim4}.}
We have
\begin{align*}
\|(I - \hat{\mathcal{C}}_{\hat{w}})^{-1}\|_{\mathcal{B}(L^2(\hat{\Gamma}))} 
& \leq \sum_{j=0}^\infty \|\hat{\mathcal{C}}_{\hat{w}}\|_{\mathcal{B}(L^2(\hat{\Gamma}))}^j
 = \frac{1}{1 - \|\hat{\mathcal{C}}_{\hat{w}}\|_{\mathcal{B}(L^2(\hat{\Gamma}))}},
\end{align*}
whenever $\|\hat{\mathcal{C}}_{\hat{w}}\|_{\mathcal{B}(L^2(\hat{\Gamma}))} < 1$.
Using the bound $\sup_{\zeta \in \mathcal{I}} \|\hat{\mathcal{C}}_-\|_{\mathcal{B}(L^2(\hat{\Gamma}))} < \infty$ and equation (\ref{ChatwnormIV}), we find
\begin{align*}
\|\hat{\mu} - I\|_{L^2(\hat{\Gamma})} & = 
\|(I- \hat{\mathcal{C}}_{\hat{w}})^{-1}\hat{\mathcal{C}}_{\hat{w}}I\|_{L^2(\hat{\Gamma})} 
 \leq \|(I - \hat{\mathcal{C}}_{\hat{w}})^{-1}\|_{\mathcal{B}(L^2(\hat{\Gamma}))}
\|\hat{\mathcal{C}}_-(\hat{w})\|_{L^2(\hat{\Gamma})}
	\\
& \leq  
\frac{C\|\hat{w}\|_{L^2(\hat{\Gamma})}}{1 - \|\hat{\mathcal{C}}_{\hat{w}}\|_{\mathcal{B}(L^2(\hat{\Gamma}))}}
\leq C\|\hat{w}\|_{L^2(\hat{\Gamma})}
\end{align*}
for all $(x,t) \in \mathbb{S}$ with $t$ large enough. In view of (\ref{hatwL2IV}), this gives (\ref{muhatestimateIV}).
\proofendcontinue

\begin{claim}\label{claim5}
There exists a unique solution $\hat{m} \in I + \dot{E}^2(\C \setminus \hat{\Gamma})$ of the $L^2$-RH problem (\ref{RHmhatIV}) whenever $t > T$. This solution is given by
\begin{align}\label{hatmrepresentationIV}
\hat{m}(x, t, k) = I + \hat{\mathcal{C}}(\hat{\mu}\hat{w})  = I + \frac{1}{2\pi i}\int_{\hat{\Gamma}} \hat{\mu}(x, t, s) \hat{w}(x, t, s) \frac{ds}{s - k}.
\end{align}
\end{claim}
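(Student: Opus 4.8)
The plan is to obtain Claim~\ref{claim5} as a direct application of the general Beals--Coifman theory of $L^2$-Riemann--Hilbert problems on Carleson jump contours, all of whose hypotheses have by now been assembled. First I would note that, by assumption ($\Gamma$1) and because adjoining the unit circle to $\Gamma$ does not destroy the Carleson property, $\hat{\Gamma}$ is a Carleson jump contour up to reorientation of a subcontour; hence the Cauchy operators $\hat{\mathcal{C}}_\pm$ are bounded on $L^2(\hat{\Gamma})$, satisfy the jump identity $\hat{\mathcal{C}}_+ - \hat{\mathcal{C}}_- = I$, and $\hat{\mathcal{C}}f \in \dot{E}^2(\C \setminus \hat{\Gamma})$ for every $f \in L^2(\hat{\Gamma})$. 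By Claims~\ref{claim1}--\ref{claim2} and the hypotheses on $v$ we have $\hat{w} \in L^2(\hat{\Gamma}) \cap L^\infty(\hat{\Gamma})$, so $\hat{\mu}\hat{w} = \hat{w} + (\hat{\mu} - I)\hat{w} \in L^2(\hat{\Gamma})$ (using $\hat{\mu} - I \in L^2(\hat{\Gamma})$ from (\ref{hatmudefIV}) and $\hat{w} \in L^\infty(\hat{\Gamma})$); thus the Cauchy integral in (\ref{hatmrepresentationIV}) is well defined and its value $\hat{m}$ lies in $I + \dot{E}^2(\C \setminus \hat{\Gamma})$.

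For existence I would then check that this $\hat{m}$ satisfies the jump relation of (\ref{RHmhatIV}). The definition (\ref{hatmudefIV}) is equivalent to the identity $(I - \hat{\mathcal{C}}_{\hat{w}})\hat{\mu} = I$, i.e. $\hat{\mu} = I + \hat{\mathcal{C}}_-(\hat{\mu}\hat{w})$, so the nontangential boundary values of (\ref{hatmrepresentationIV}) are $\hat{m}_- = I + \hat{\mathcal{C}}_-(\hat{\mu}\hat{w}) = \hat{\mu}$ and $\hat{m}_+ = I + \hat{\mathcal{C}}_+(\hat{\mu}\hat{w}) = \hat{\mu} + \hat{\mu}\hat{w} = \hat{\mu}(I + \hat{w}) = \hat{m}_-\hat{v}$, which is exactly the required jump condition.

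For uniqueness I would run the correspondence in the opposite direction. Since $\det v = 1$ on $\Gamma$ and $\det m_0 \equiv 1$ (because $m^Z$ solves a normalized RH problem with unimodular jump, so $\det m^Z \equiv 1$ by the usual Liouville argument for the Smirnoff class), we have $\det \hat{v} = 1$ on $\hat{\Gamma}$. Given any solution $\hat{m} \in I + \dot{E}^2(\C \setminus \hat{\Gamma})$, the standard representation of generalized-Smirnoff-class functions via their jump gives $\hat{m} = I + \hat{\mathcal{C}}(\hat{m}_-\hat{w})$, whence $\hat{m}_- = I + \hat{\mathcal{C}}_-(\hat{m}_-\hat{w})$, i.e. $(I - \hat{\mathcal{C}}_{\hat{w}})(\hat{m}_- - I) = \hat{\mathcal{C}}_{\hat{w}}I$. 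By Claim~\ref{claim3}, $I - \hat{\mathcal{C}}_{\hat{w}}$ is invertible for $t > T$, so $\hat{m}_- - I$ is forced to equal $(I - \hat{\mathcal{C}}_{\hat{w}})^{-1}\hat{\mathcal{C}}_{\hat{w}}I$, i.e. $\hat{m}_- = \hat{\mu}$, and then $\hat{m} = I + \hat{\mathcal{C}}(\hat{\mu}\hat{w})$ is uniquely determined. In the write-up all of this is packaged into a single general theorem on $L^2$-RH problems over Carleson jump contours (as in \cite{Lnonlinearsteepest}), so it suffices to cite that theorem and point to ($\Gamma$1) and Claims~\ref{claim1}--\ref{claim3} for its hypotheses.

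The only genuinely non-bookkeeping point — and the reason the uniform bound $\sup_{(x,t)\in\mathbb{S}}\|\mathcal{S}_{\hat{\Gamma}}\|_{\mathcal{B}(L^2(\hat{\Gamma}))} < \infty$ was built into the setup — is confirming that $\hat{\Gamma}$, with the unit circle adjoined and subcontours possibly reoriented, still counts as a Carleson jump contour, so that the boundedness of $\hat{\mathcal{C}}_\pm$ and the Smirnoff-class representation theorem really do apply. Once that is granted, everything above is routine.
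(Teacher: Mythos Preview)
Your proposal is correct and follows essentially the same Beals--Coifman route as the paper: the paper's proof simply records that $\hat{\mu} - I = \hat{\mathcal{C}}_{\hat{w}}\hat{\mu}$ and then invokes ``the theory of $L^2$-RH problems'' to conclude that $\hat{m} = I + \hat{\mathcal{C}}(\hat{\mu}\hat{w})$ solves (\ref{RHmhatIV}), which is exactly the computation you spell out. The only minor difference is in the uniqueness step: the paper appeals directly to the standard Liouville-type argument (two solutions with $\det\hat{v}=1$ differ by an entire $\dot{E}^2$ function, hence coincide), whereas you deduce uniqueness from the invertibility of $I-\hat{\mathcal{C}}_{\hat{w}}$ established in Claim~\ref{claim3}; both are valid and standard.
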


{\it Proof of Claim \ref{claim5}.}
Uniqueness follows since $\det \hat{v} = 1$. Moreover, equation (\ref{hatmudefIV}) implies that $\hat{\mu} - I = \hat{\mathcal{C}}_{\hat{w}} \hat{\mu}$. Hence, by the theory of $L^2$-RH problems, $\hat{m} = I + \hat{\mathcal{C}}(\hat{\mu} \hat{w})$ satisfies the $L^2$-RH problem (\ref{RHmhatIV}). 
\proofendcontinue

\begin{claim}\label{claim6}
For each point $(x, t) \in \mathbb{S}$ with $t > T$, the nontangential limit of $k(\hat{m}(x,t,k) - I)$ as $k \to \infty$ exists and is given by
\begin{align*}
\ntlim_{k\to \infty} k(\hat{m}(x,t,k) - I) 
= -\frac{1}{2\pi i}\int_{\hat{\Gamma}} \hat{\mu}(x,t,k) \hat{w}(x,t,k) dk.
\end{align*}
\end{claim}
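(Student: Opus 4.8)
The starting point is the representation formula (\ref{hatmrepresentationIV}) established in Claim \ref{claim5}, which gives
\begin{align*}
k(\hat{m}(x,t,k) - I) = \frac{1}{2\pi i}\int_{\hat{\Gamma}} (\hat{\mu}\hat{w})(x,t,s)\,\frac{k}{s-k}\,ds .
\end{align*}
Using the elementary identity $\frac{k}{s-k} = -1 + \frac{s}{s-k}$, I would rewrite this as
\begin{align*}
k(\hat{m}(x,t,k) - I) = -\frac{1}{2\pi i}\int_{\hat{\Gamma}} \hat{\mu}(x,t,s)\hat{w}(x,t,s)\,ds \;+\; \frac{1}{2\pi i}\int_{\hat{\Gamma}} (\hat{\mu}\hat{w})(x,t,s)\,\frac{s}{s-k}\,ds ,
\end{align*}
so that the claim reduces to showing that, for each fixed $(x,t) \in \mathbb{S}$ with $t > T$, the second integral tends to $0$ as $k$ approaches $\infty$ nontangentially with respect to $\hat{\Gamma}$.

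Before doing so I would check that $\hat{\mu}\hat{w} \in L^1(\hat{\Gamma})$, which makes both integrals above well defined. By (\ref{hatmudefIV}) we have $\hat{\mu} - I \in L^2(\hat{\Gamma})$; and $\hat{w} \in L^1(\hat{\Gamma}) \cap L^2(\hat{\Gamma})$, since $\hat{w} \in L^1(Y)$ by (\ref{hatwL1IV}), $\hat{w}$ has $L^1$-norm of order $t^{-2/3}$ on $\Gamma \setminus Y$ by (\ref{wL12inftyIV}) and the boundedness of $m_0$, the remaining piece of $\hat{\Gamma}$ (the unit circle) has finite length with $\hat{w}|_{|k|=1} = m_0^{-1} - I \in L^\infty$, and $\hat{w} \in L^2(\hat{\Gamma})$ by (\ref{hatwL2IV}). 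Writing $\hat{\mu}\hat{w} = \hat{w} + (\hat{\mu} - I)\hat{w}$ and applying the Cauchy--Schwarz inequality to the last term yields $\hat{\mu}\hat{w} \in L^1(\hat{\Gamma})$.

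The only genuinely technical point is the interchange of limit and integral in the second term, which I would carry out by dominated convergence. By assumption ($\Gamma$4), $\infty$ can be approached nontangentially with respect to $\Gamma$, hence with respect to $\hat{\Gamma}$; since $\hat{\Gamma}$ coincides with a finite union of rays outside a compact set, such an approach keeps $k$ inside a sector whose directions are bounded away from the asymptotic directions of $\hat{\Gamma}$. Distinguishing the cases $|s| \leq |k|/2$, $|k|/2 < |s| < 2|k|$, and $|s| \geq 2|k|$, one obtains constants $\delta > 0$ and $R > 0$ such that $|s - k| \geq \delta(|s| + |k|)$ for all $s \in \hat{\Gamma}$ whenever $k$ lies in the approach region with $|k| > R$. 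Hence $\bigl|(\hat{\mu}\hat{w})(x,t,s)\tfrac{s}{s-k}\bigr| \leq \delta^{-1}|(\hat{\mu}\hat{w})(x,t,s)|$, which is an $L^1(\hat{\Gamma})$-majorant independent of $k$, while $\frac{s}{s-k} \to 0$ as $|k| \to \infty$ for each fixed $s \in \hat{\Gamma}$. Dominated convergence therefore shows that the second integral vanishes in the nontangential limit, and hence
\begin{align*}
\ntlim_{k\to \infty} k(\hat{m}(x,t,k) - I) = -\frac{1}{2\pi i}\int_{\hat{\Gamma}} \hat{\mu}(x,t,k)\hat{w}(x,t,k)\,dk ,
\end{align*}
which is the assertion of the claim; in particular the existence of this nontangential limit is part of the conclusion. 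I do not expect any serious obstacle: the whole argument rests on the $L^1$-bound for $\hat{\mu}\hat{w}$ and on the elementary geometry of nontangential approach already packaged into assumption ($\Gamma$4).
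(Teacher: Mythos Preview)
Your proof is correct and follows essentially the same approach as the paper: both start from the representation (\ref{hatmrepresentationIV}), use the identity $\frac{k}{s-k}=-1+\frac{s}{s-k}$, invoke the nontangential geometry to get a bound of the form $|s-k|\geq c(|s|+|k|)$, and conclude by dominated convergence using $\hat{\mu}\hat{w}\in L^1(\hat{\Gamma})$. If anything, you are slightly more explicit than the paper in justifying the $L^1$ bound on $\hat{\mu}\hat{w}$ and the geometric inequality.
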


{\it Proof of Claim \ref{claim6}.}
Fix $(x,t) \in \mathbb{S}$ and let $W_{a,b} = \{a \leq \arg k \leq b\}$ be a nontangential sector at $\infty$ with respect to $\hat{\Gamma}$. This means that there exists a $\delta > 0$ such that $W_{a - \delta, b + \delta}$ does not intersect $\hat{\Gamma} \cap \{|z| > R\}$ whenever $R>0$ is large enough. It follows that there exists a $c>0$ such that $|s-k| > c(|s| + |k|)$ for all $s \in \hat{\Gamma}$ and all $k \in W_{a,b}$ with $|k|$ sufficiently large.
Thus, since $\hat{\mu}\hat{w} \in L^1(\hat{\Gamma})$, dominated convergence implies
\begin{align*}
& \ntlim_{k\to \infty} \bigg|\int_{\hat{\Gamma}} (\hat{\mu} \hat{w})(x, t, s) \frac{k ds}{s - k} + \int_{\hat{\Gamma}} (\hat{\mu} \hat{w})(x, t, s) ds\bigg| 
\leq \ntlim_{k\to \infty} \int_{\hat{\Gamma}} |(\hat{\mu} \hat{w})(x, t, s)| \frac{|s| |ds|}{|s - k|} 
	\\
& 
\leq \ntlim_{k\to \infty} \int_{\hat{\Gamma}} |(\hat{\mu} \hat{w})(x, t, s)| \frac{|s| |ds|}{c(|s| + |k|)} 
= 0.
\end{align*}
The claim now follows from (\ref{hatmrepresentationIV}).
\proofendcontinue

Claim \ref{claim6} implies 
\begin{align}\nonumber
& \ntlim_{k\to \infty} k(m(x,t,k) - I) = \ntlim_{k\to \infty} k(\hat{m}(x,t,k) - I) 
	\\ \label{limlmminusIIV}
= & - \frac{1}{2\pi i} \int_{|k| = 1} \hat{\mu}(x,t,k) (m_0(x,t,k)^{-1} - I)dk
 - \frac{1}{2\pi i}\int_{\Gamma} \hat{\mu}(x,t,k) \hat{w}(x,t,k) dk.
\end{align}
By (\ref{mZasymptotics}),
\begin{align} \nonumber
  m_0(x, t, k)^{-1} = &\; m^Z\biggl(s, \frac{-x}{(3t)^{\frac{1}{3}}}, (3t)^{\frac{1}{3}}k, (3t)^{\frac{1}{3}}k_0 \biggr)^{-1}
  	\\ \label{m0invasymptoticsIV}
= &\; I - \frac{B(x, t)}{(3t)^{\frac{1}{3}}k} + O\biggl(\frac{1}{(3t)^{\frac{2}{3}}k^2}\biggr), \qquad t^{\frac{1}{3}} |k| \to \infty,
\end{align}
where $B(x, t)$ is defined by
\begin{align*}
B(x, t) =  \frac{1}{2}\begin{pmatrix} 0 & u\Big(\frac{-x}{(3t)^{\frac{1}{3}}}; s, 0, -s\Big) \\ u\Big(\frac{-x}{(3t)^{\frac{1}{3}}}; s, 0, -s\Big) & 0 \end{pmatrix}.
\end{align*}
Using (\ref{m0invminusI}), (\ref{muhatestimateIV}), and (\ref{m0invasymptoticsIV}), we find
\begin{align}\nonumber
 &\int_{|k| = 1}  \hat{\mu}(x,t,k) (m_0(x,t,k)^{-1} - I) dk
	\\\nonumber
= &\int_{|k| = 1} (m_0(x,t,k)^{-1} - I) dk  + \int_{|k| = 1}  (\hat{\mu}(x,t,k) - I) (m_0(x,t,k)^{-1} - I) dk
 	\\\nonumber
= & -\frac{B(x, t)}{(3t)^{\frac{1}{3}}} \int_{|k| = 1} \frac{dk}{k } 
 + O\big(t^{-\frac{2}{3}}\big) 
  + O\Big(\|\hat{\mu}(x,t,\cdot) - I\|_{L^2(\hat{\Gamma})} \|(m_0(x,t,\cdot)^{-1} - I)\|_{L^2(|k| = 1)} \Big)
	\\ \label{inthatmumjIV}
= & - \frac{2\pi i B(x, t)}{(3t)^{\frac{1}{3}}} + O\big(t^{-\frac{2}{3}}\big), \qquad t \to \infty, \quad (x,t) \in \mathbb{S},
\end{align}
uniformly with respect to $x$. 
On the other hand, 
\begin{align}\nonumber
\biggl|\int_{\Gamma}\hat{\mu}(\zeta,t,k) \hat{w}(\zeta,t,k) dk\biggr|
 \leq \|\hat{\mu} - I\|_{L^2(\Gamma)}  \|\hat{w}\|_{L^2(\Gamma)} + \|\hat{w}\|_{L^1(\Gamma)}.
\end{align}
By (\ref{hatwL2IV}) and (\ref{muhatestimateIV}), both $\|\hat{w}\|_{L^2(\Gamma)}$ and $\|\hat{\mu} - I\|_{L^2(\Gamma)}$ are $O\big(t^{-\frac{1}{3}}\big)$ as $t \to \infty$.
Moreover, the $L^1$ norm of $w$ is $O(t^{-\frac{2}{3}})$ on $\Gamma \setminus Y$ by (\ref{wL12inftyIV}). 
Since equations (\ref{m0def6}) and (\ref{mZbounded}) imply that $m_{0+}(x, t, k)$ and $m_{0-}(x, t, k)$ are uniformly bounded for $(x,t) \in \mathbb{S}$ and $k \in \Gamma \cap \{|k| < 1\}$, it follows that the $L^1$ norm of $\hat{w}$ also is $O(t^{-\frac{2}{3}})$ on $\Gamma \setminus Y$. On the other hand, the $L^1$ norm of $\hat{w}$ is $O(t^{-\frac{2}{3}})$ on $Y$ by (\ref{hatwL1IV}). Thus $\|\hat{w}\|_{L^1(\Gamma)} = O(t^{-\frac{2}{3}})$. We infer that
\begin{align} \label{intSigmahatmuhatwIV}
& \biggl|\int_{\Gamma} \hat{\mu}(x,t,l) \hat{w}(x,t,l) dl\biggr| = O\big(t^{-\frac{2}{3}}\big), \qquad t \to \infty, \quad (x,t) \in \mathbb{S},
\end{align}
uniformly with respect to $x$.
Equations (\ref{limlmminusIIV}), (\ref{inthatmumjIV}), and (\ref{intSigmahatmuhatwIV}) give (\ref{limlm12IV}).
\proofend

\section{Asymptotics in the self-similar sector}\nequation\label{selfsimilarsec}
The purpose of this section is to prove Theorem \ref{mainth2} which describes the asymptotic behavior of quarter plane solutions of the mKdV equation in the self-similar sector (\ref{sectorIVdef}). The proof relies on an application of the nonlinear steepest descent result of Theorem \ref{steepestdescentthIV}.

\begin{theorem}[Asymptotics in the self-similar sector]\label{mainth2}
Let $r \in C^{11}(\R)$ and $h \in C^5(\bar{D}_2)$ be two functions which satisfy the assumptions of Theorem \ref{mainth1}.
Given $N > 0$, let $\mathbb{S} = \{t>1, \, 0 < x < N t^{1/3}\}$ be the self-similar sector defined in (\ref{sectorIVdef}). Let $s = ir(0)$.

Then there exists a $T>0$ such that the $L^2$-RH problem (\ref{RHM}) has a unique solution and the limit in (\ref{ulim}) exists for all $(x,t) \in \mathbb{S}$ with $t > T$.
Moreover, the function $u(x,t)$ defined by (\ref{ulim}) satisfies
\begin{align}\label{ufinalIV}
u(x,t) = \frac{u^P\big(\frac{-x}{(3t)^{1/3}}; s, 0, -s\big)}{(3t)^{\frac{1}{3}}} 
+ O\bigl(t^{-\frac{2}{3}}\bigr),\qquad
t \to \infty, \quad (x,t) \in \mathbb{S},
\end{align}
where the error term is uniform with respect to $x$, and $u^P(\cdot; s, 0,-s)$ denotes the solution of the Painlev\'e II equation (\ref{painleve2}) corresponding to $(s_1, s_2, s_3) = (s,0,-s)$ according to (\ref{painlevebijection}). 
\end{theorem}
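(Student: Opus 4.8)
The plan is to reuse the seven‑step scheme of the proof of Theorem~\ref{mainth1}, adapted to the fact that the two stationary points $\pm k_0$ now coalesce at the origin: near $0$ the parabolic‑cylinder reduction must be replaced by the Painlev\'e~II model RH problem of Section~\ref{painlevesec}, so the final step invokes Theorem~\ref{steepestdescentthIV} in place of Theorem~\ref{steepestdescentth}. Steps~1--2 (removing the $h$‑part of the jump) are carried out exactly as before: on $\mathbb S$ one has $\zeta = x/t \in (0,Nt^{-2/3}]\subset(0,N]$ for large $t$, so Lemma~\ref{decompositionlemma} applies verbatim, and after the deformation $(\ref{checkMdef})$ the resulting jump $J^h$ has $L^1$, $L^2$, $L^\infty$ norms of $J^h-I$ equal to $O(t^{-3/2})$ on $\partial D_1\cup\partial D_4$ (one checks $\re\Phi\le 0$ there, so the factors $e^{\pm t\Phi}$ are bounded).

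The one essential novelty is in Step~3. I would conjugate by $\Delta(\zeta,k)=\diag(\delta^{-1},\delta)$ and by the constant matrix $G=\diag(e^{i\pi/4},e^{-i\pi/4})$, where $\delta$ is now defined with an integral over the \emph{whole} real axis, $\delta(\zeta,k)=\exp\bigl(-\tfrac{1}{2\pi i}\int_{\R}\ln(1-|r(s)|^2)\tfrac{ds}{s-k}\bigr)$. Since $\ln(1-|r|^2)\in H^1(\R)$ (from $r\in C^{11}$, $\sup_\R|r|<1$ and the decay $r^{(j)}=O(k^{-4+2j})$), the functions $\delta^{\pm1}$ are analytic and bounded off $\R$; crucially $\delta$ has no branch points at $\pm k_0$ (unlike the $\delta$ used in the similarity sector), $\delta_+=\delta_-/(1-|r|^2)$ on all of $\R$, and — since $|r(s)|=|r(-s)|$ makes the relevant principal‑value integral vanish at $k=0$ — $\delta_+(0)^{-2}=\delta_-(0)^2=1-r(0)^2$. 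The constant conjugation by $G$ sends $v_{12}\mapsto -iv_{12}$, $v_{21}\mapsto iv_{21}$, turns the symmetry $v(k)=\overline{v(-\bar k)}$ into the symmetry $v(k)=\sigma_3\overline{v(-\bar k)}\sigma_3$ required by Theorem~\ref{steepestdescentthIV}, and is the source of the factor $i$ in $s=ir(0)$; note $r(0)\in\R$ and $|r(0)|\le\sup_\R|r|<1$, so $s\in i\R$ with $|s|<1$, as needed. On $\R$ the conjugated jump then has $1-|r|^2$ in the $(1,1)$ entry and $1$ in the $(2,2)$ entry, with off‑diagonal entries built from $\delta_\pm^{\mp2}$ and $r$, which admits the triangular factorizations used next.

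Steps~4--6 then parallel the similarity‑sector proof. I would decompose $r_1=r/(1-r\bar r)$ and $r_4=\bar r/(1-r\bar r)$ into analytic parts plus $O(t^{-3/2})$ remainders by a lemma analogous to Lemma~\ref{decompositionlemma2} (with the bound $|r_{j,a}(x,t,k)-r_j(k_0)|\le C|k-k_0|e^{\frac t4|\re\Phi|}$ for $|k|$ bounded and $C$ independent of $\zeta$), deform the parts of $\R$ with $|k|>k_0$ onto the rays $Y_1,Y_2$ of $Y=k_0Z$ — whose directions $\pm\pi/6$, $\pm5\pi/6$ are those along which $\re\Phi$ has the correct sign, cf.\ $\re\Phi(\zeta,k_0+re^{\pm i\pi/6})=\mp(8r^3+12\sqrt3\,k_0r^2)$ — using the analytic continuations $r_{1,a},r_{4,a}$ together with the fact that $\delta_+^{-2}$ (resp.\ $\delta_-^2$) extends analytically to the upper (resp.\ lower) half‑plane; the segment $(-k_0,k_0)$ is \emph{left in place}, its jump merely rewritten in the factored form appearing on $Y_3$ in $(\ref{vjumpIV})$. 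After checking $(\Gamma1)$--$(\Gamma4)$, the uniform bound $\sup_{(x,t)\in\mathbb S}\|\mathcal S_{\hat\Gamma}\|_{\mathcal B(L^2(\hat\Gamma))}<\infty$ (a scaling argument, since $k_0^{-1}Y=Z$ is fixed) and estimate $(\ref{wL12inftyIV})$ on $\Gamma\setminus Y$ (from the $O(t^{-3/2})$ remainder bounds and the exponential decay of $e^{t\Phi}$ on the parts of $Y_1,Y_2$ with $|k|$ bounded below), Theorem~\ref{steepestdescentthIV} applies with $\Phi$ as in $(\ref{PhiIV})$ and $s=ir(0)$: the form $(\ref{vjumpIV})$ holds with $R_j$ built from $\delta_\pm^{\mp2}$ and the $r_{j,a}$ (and directly from $r$ on $Y_3$), the Step~3 identities give $R_1(x,t,0)=R_3(x,t,0)=s$ and $R_2(x,t,0)=R_4(x,t,0)=-s$, and the Lipschitz bounds $(\ref{RinequalitiesIV})$ follow from the analytic‑approximation estimate, the smoothness of $\delta$ near $0$ (so $|\delta(\zeta,k)^{\pm2}-\delta(\zeta,0)^{\pm2}|\le C|k|$), the $C^1$‑smoothness of $r$, and the elementary inequalities $|k-k_0|\le|k|$, $k_0\le|k|$ valid on $Y_1\cup Y_2$.

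Finally, tracking the conjugations as $k\to\infty$ along a direction avoiding $D_1\cup D_4$ and $\Gamma$ (where $\Delta\to I$ and the local analytic factors are trivial) gives $\ntlim_{k\to\infty}(km)_{12}=-i\,\ntlim_{k\to\infty}(kM)_{12}$, so that $u(x,t)=-2i\,\ntlim_{k\to\infty}(kM)_{12}=-2i\cdot i\,\ntlim_{k\to\infty}(km)_{12}$ which, by $(\ref{limlm12IV})$, equals $\frac{u^P\big(\frac{-x}{(3t)^{1/3}};s,0,-s\big)}{(3t)^{1/3}}+O(t^{-2/3})$, i.e.\ $(\ref{ufinalIV})$. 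I expect the main difficulty to lie in Step~6: proving that after the deformations the jump on $Y$ is in precisely the normalized form $(\ref{vjumpIV})$ with the correct constant $s=ir(0)$ and with the remainders controlled by $|k|$ all the way up to the merging point. This hinges on the two modifications of Step~3 — the $\delta$ whose jump lives on all of $\R$ (hence is regular at the coalescing stationary points, which is exactly what permits an $O(|k|)$ bound near the origin) and the constant conjugation supplying both the $\sigma_3$‑symmetry and the factor $i$ — together with the elementary cancellation $\delta_\mp(\zeta,0)^{\pm2}(1-r(0)^2)^{\mp1}=1$.
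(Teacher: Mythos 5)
Your proposal is correct and follows essentially the same route as the paper's proof: the same seven steps, the same whole-line definition of $\delta$ (regular at the coalescing points $\pm k_0$), the same constant conjugation by $e^{\pm\frac{i\pi}{4}\sigma_3}$ producing the $\sigma_3$-symmetry and the factor $i$ in $s=ir(0)$, the segment $(-k_0,k_0)$ left in place in factored form, and the final application of Theorem \ref{steepestdescentthIV}. The only step you treat somewhat loosely is the bound $|\delta(k)^{\pm 2}-\delta_+(0)^{\pm 2}|\le C|k|$, which in the paper rests on the identity $\chi(k)-\chi_+(0)=-\frac{k}{2\pi i}\int_{\R}\frac{1}{s}\ln\bigl(\frac{1-|r(s)|^2}{1-|r(0)|^2}\bigr)\frac{ds}{s-k}$ and the fact that the integrand's density lies in $H^1(\R)$, but this is exactly the cancellation you gesture at.
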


Before presenting the proof of Theorem \ref{mainth2}, we give a corollary.

Theorem \ref{mainth2} applies whenever the functions $r(k)$  and $h(k)$ satisfy the stated assumptions, regardless of whether these functions derive from a quarter plane solution or not. If we do assume that $r(k)$  and $h(k)$ are defined in terms of some initial and boundary values $u_0(x)$ and $\{g_j(t)\}_0^2$ via (\ref{hrdef}), we obtain the following corollary.

\begin{corollary}[Asymptotics for quarter plane solutions]\label{selfsimilarcorollary}
Suppose $u_0, g_0, g_1, g_2$ satisfy (\ref{ugjassump}) with $n = 11$ and $m = 4$, that the associated spectral functions satisfy the global relation (\ref{GR}), and that the homogeneous $L^2$-RH problem associated with (\ref{RHM}) has only the trivial solution for each $(x,t) \in [0,\infty) \times [0,\infty)$. Then the function $u(x,t)$  defined by (\ref{ulim}) is a classical solution of (\ref{mkdv}) in the quarter plane with initial data $u(x,0) = u_0(x)$  and boundary values $\partial_x^ju(0,t) = g_j(t)$, $j = 0,1,2$. Moreover, in the self-similar sector, the solution $u(x,t)$ satisfies
$$u(x,t) = O\bigl(t^{-\frac{2}{3}}\bigr),\qquad
t \to \infty, \quad (x,t) \in \mathbb{S},$$
uniformly with respect to $x$. 
\end{corollary}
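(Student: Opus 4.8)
The plan is to run the seven‑step scheme used for Theorem \ref{mainth1} once more, but with Theorem \ref{steepestdescentthIV} (rather than Theorem \ref{steepestdescentth}) as the nonlinear steepest descent input and with all the local analysis now concentrated at the origin: by (\ref{k0merge}) the two critical points $\pm k_0$ merge there as $(x,t)\to\infty$ in $\mathbb{S}$. Throughout I set $\zeta=x/t$, $k_0=\sqrt{\zeta/12}$, and use the phase $\Phi(\zeta,k)=8ik^3-24ik_0^2k$ of (\ref{PhiIV}); note $\re\Phi\equiv0$ on $\R$, while $\re\Phi<0$ on $D_1$ and $\re\Phi>0$ on $D_4$.

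For Steps 1--2 one repeats Lemma \ref{decompositionlemma} verbatim: $h=h_a(t,\cdot)+h_r(t,\cdot)$ on $\partial D_1$, with $h_a$ analytic in $D_1$, $|h_a|\le C(1+|k|)^{-1}e^{\frac t2|\re\Phi(\zeta,k)|}$ on $\bar D_1$, and $h_r$ of $L^1\cap L^2\cap L^\infty$ size $O(t^{-3/2})$; the bound is uniform because $\zeta$ ranges over $(0,Nt^{-2/3}]\subset(0,N]$. Since $\re\Phi<0$ on $D_1$ and $>0$ on $D_4$, the triangular conjugation (\ref{checkMdef}) of $M$ removes the $h_a$‑part of the jump, leaves only an $O(t^{-3/2})$ jump on $\partial D_1\cup\partial D_4$, and does not affect the limit (\ref{ulim}). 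For Steps 3--5 one observes that the jump (\ref{Jdef}) on $\R$ has $(1,1)$‑entry equal to $1$, so it factors \emph{globally} as a lower‑ times upper‑unipotent product with off‑diagonal entries $r(k)e^{t\Phi}$ and $-\overline{r(\bar k)}e^{-t\Phi}$; in particular no $\delta$‑type scalar conjugation is needed here (the relevant local model is the \emph{constant‑coefficient} Painlev\'e parametrix of Theorem \ref{mZth}, not a parabolic‑cylinder one). On $(-\infty,-k_0)\cup(k_0,\infty)$ one opens lenses whose boundaries are the rays $Y_1$ and $Y_2$ of (\ref{Ydef}), pushing the two unipotent factors into their half‑planes of exponential decay; this needs analytic approximations $r_a$ of $r$ built as in Lemma \ref{decompositionlemma2} but \emph{matched to $r$ to first order at $k=0$} and decaying off $\R$ after multiplication by $e^{t\Phi}$. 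The segment $(-k_0,k_0)$ is not lensed; it becomes $Y_3$ and keeps an $O(1)$ jump. A final $k$‑independent conjugation by a constant matrix brings the jump on $Y$ to the shape (\ref{vjumpIV}) and converts the mKdV symmetry $v(k)=\overline{v(-\bar k)}$ into the form $v(k)=\sigma_3\overline{v(-\bar k)}\sigma_3$ of (\ref{vsymmIV}); this step introduces the factor $i$, so the Painlev\'e parameter is $s=ir(0)$ (here $r(0)\in\R$ by $r(k)=\overline{r(-\bar k)}$, so $s\in i\R$ and $|s|=|r(0)|<1$). The outcome is an $L^2$‑RH problem on a contour $\Gamma$ of the type described before Theorem \ref{steepestdescentthIV}, with phase $\Phi$, with only $O(t^{-3/2})$ jumps on $\Gamma\setminus Y$ (from $h_r$, from $r_r$, and from the parts of $\partial D_1\cup\partial D_4$ on which nothing was deformed), and with the form (\ref{vjumpIV}) on $Y$.

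For Step 6 one checks the hypotheses of Theorem \ref{steepestdescentthIV}: ($\Gamma$1)--($\Gamma$4) and the uniform $L^2$‑boundedness of $\mathcal S_{\hat\Gamma}$ follow from the self‑similar scaling of $\Gamma$ near $0$ together with its fixed structure away from $0$, as in the corresponding step for Theorem \ref{mainth1}; (\ref{wL12inftyIV}) is immediate from the $O(t^{-3/2})$ bounds on $\Gamma\setminus Y$; and (\ref{vjumpIV})--(\ref{RinequalitiesIV}) reduce to the two facts $R_j(x,t,0)=\pm s$, true by the first‑order matching of $r_a$ and by $\overline{r(0)}=r(0)$, and $|R_j(x,t,k)\mp s|\le L|k|e^{\frac t2|\re\Phi|}$, true by the Lipschitz bound on $r_a$ and the estimate $|\re\Phi|\ge c\,|k\mp k_0|^3$ on $Y_1\cup Y_2$. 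Theorem \ref{steepestdescentthIV} then gives, for all large $t$, a unique $m$ with $\ntlim_{k\to\infty}(km(x,t,k))_{12}=(2(3t)^{1/3})^{-1}u^P(-x/(3t)^{1/3};s,0,-s)+O(t^{-2/3})$, uniformly in $x$. For Step 7 one unwinds: the triangular conjugations of Step 2 tend to $I$ as $k\to\infty$ and the remaining transformations are $k$‑independent, so the RH problem (\ref{RHM}) for $M$ is equivalent to the one for $m$, is therefore uniquely solvable for large $t$, the limit (\ref{ulim}) exists, and $(kM)_{12}$ differs from $(km)_{12}$ by a fixed constant. That constant is forced to be real after multiplication by $-2i$ (since $u^P(\cdot;s,0,-s)$ is real for $s\in i\R$ and real argument), and tracking it gives $u(x,t)=-2i\,\ntlim_{k\to\infty}(kM)_{12}=(3t)^{-1/3}u^P(-x/(3t)^{1/3};s,0,-s)+O(t^{-2/3})$ uniformly in $x$, i.e.\ (\ref{ufinalIV}).

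I expect the main obstacles to be, first, constructing the analytic approximations so that the functions $R_j$ in (\ref{vjumpIV}) reach $s=ir(0)$ exactly at $k=0$ while obeying the estimates (\ref{RinequalitiesIV}) \emph{uniformly as $k_0\to0$}; and second, laying out the contour so that the $h$‑removal domains $D_1,D_4$, the lens regions for $|k|>k_0$, and the prescribed contour $Y$ near $0$ are mutually compatible and every negligible jump genuinely lies on $\Gamma\setminus Y$. Once these are in place, Theorem \ref{steepestdescentthIV} supplies everything else, and Corollary \ref{selfsimilarcorollary} then follows by additionally invoking Theorem \ref{existenceth} and the global relation (which forces $r(0)=0$, hence $s=0$ and $u^P\equiv0$).
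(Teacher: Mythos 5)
Your proposal spends nearly all of its effort re-deriving Theorem \ref{mainth2}, which the paper's corollary simply cites, and the one step that is actually specific to the corollary --- that the global relation forces $s=ir(0)=0$ --- is asserted in a parenthesis without proof. This is the heart of the matter: the paper shows that the symmetries $a(k)=\overline{a(-\bar k)}$, $b(k)=\overline{b(-\bar k)}$, $A(k)=\overline{A(-\bar k)}$, $B(k)=\overline{B(-\bar k)}$ force $a(0),b(0),A(0),B(0)$ to be real, then evaluates the global relation $A(0)b(0)=B(0)a(0)$ and the determinant relation $a(0)^2-b(0)^2=1$ at $k=0$ to get $d(0)=A(0)/a(0)$ and hence $h(0)=-b(0)/a(0)$, so that $r(0)=b(0)/a(0)+h(0)=0$. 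Without this computation you have not proved the $O(t^{-2/3})$ bound, only reproduced the formula (\ref{ufinalIV}) with an undetermined $s$.

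There is also a genuine error in your Steps 3--5. You claim the jump on $\R$ ``factors globally as a lower- times upper-unipotent product'' and that ``no $\delta$-type scalar conjugation is needed.'' The raw jump, with $(1,1)$-entry $1$ and $(2,2)$-entry $1-|r|^2$, admits only the factorization $b_l^{-1}b_u$ (lower-inverse times upper), and for that factorization the trivialization of the jump on $\R$ forces the upper factor (carrying $e^{-t\Phi}$) into the \emph{upper} lens and the lower factor (carrying $e^{t\Phi}$) into the \emph{lower} lens --- precisely the regions where these exponentials grow for $|\re k|>k_0$, since $\re\Phi<0$ just above $\R$ there. To send the factors into their decay regions ($e^{t\Phi}$ up to $Y_1$, $e^{-t\Phi}$ down to $Y_2$) one needs the factorization $B_u^{-1}B_l$, whose product has $(2,2)$-entry $1$; this is exactly what the conjugation by $\Delta(k)=\diag(\delta(k)^{-1},\delta(k))$ with $\delta_+=\delta_-/(1-|r|^2)$ on all of $\R$ (equation (\ref{deltadefIV})) achieves. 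The $\delta$-conjugation is also what makes all four functions $R_j$ in (\ref{vjumpIV}) tend to the \emph{same} constant $\pm s$ at $k=0$: the natural factorization on $Y_1,Y_2$ involves $r_1=r/(1-|r|^2)$ and $r_4=\bar r/(1-|r|^2)$, and only the factor $\delta_+(0)^{-2}=1-|r(0)|^2$ cancels the denominators to give $s=ir(0)$. As written, your lens opening fails and your value of $s$ is not justified.
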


\medskip\noindent
{\it Proof of Corollary \ref{selfsimilarcorollary}.}
Combining Theorem \ref{existenceth}  and Theorem \ref{mainth2}, we infer the existence of a classical quarter plane solution $u(x,t)$ which satisfies the asymptotic formula (\ref{ufinalIV}) in the self-similar sector. The formula (\ref{ufinalIV}), being uniformly valid as $x \to 0$, can only be consistent with the assumed decay of $u(0,t)$ provided that the coefficient $u^P$ vanishes at $x = 0$. In fact, it turns out that the global relation forces the parameter $s$ to vanish, which in turn implies that $u^P$ vanishes identically. 
To see this, we first note that the symmetries 
$$a(k) = \overline{a(-\bar{k})}, \quad b(k) = \overline{b(-\bar{k})}, \quad A(k) = \overline{A(-\bar{k})}, \quad B(k) = \overline{B(-\bar{k})},$$
imply that the four values $a(0)$, $b(0)$, $A(0)$, $B(0)$ are real. The global relation (\ref{GR}) and the unit determinant relation $a(k) \overline{a(\bar{k})} - b(k) \overline{b(\bar{k})} = 1$ then show that $h(0) = - b(0)/a(0)$, which gives $s = i r(0) = 0$.
For $s = 0$, the jump matrices in the Painlev\'e II RH problem (\ref{RHmP}) reduce to the identity matrix, so the solution $u^P(y; 0,0,0)$ corresponding to $s = 0$ is identically zero. This completes the proof of the corollary.
\proofend

\medskip\noindent
{\it Proof of Theorem \ref{mainth2}.}
Just like the proof of Theorem \ref{mainth1}, the proof consists of seven steps.

\medskip
{\bf Step 1: Introduce an analytic approximation of $h(k)$.}
Lemma \ref{decompositionlemma} applies also for $(x,t) \in \mathbb{S}$. Thus we find a decomposition 
$$h(k) = h_{a}(t, k) + h_{r}(t, k), \qquad k \in \partial D_1,$$
where $h_{a}$ and $h_{r}$ have the properties listed in Lemma \ref{decompositionlemma}.

\medskip
{\bf Step 2: Deform.}
Proceeding as in the proof of Theorem \ref{mainth1}, we note that $M$ satisfies the $L^2$-RH problem (\ref{RHM}) iff the function $M^h(x,t,k)$ defined by (\ref{checkMdef}) satisfies the $L^2$-RH problem (\ref{RHMcheck}).

\medskip
{\bf Step 3: Conjugate.}
As in the proof of Theorem \ref{mainth1}, we conjugate the RH problem to arrive at an appropriate factorization. However, instead of introducing $\delta(\zeta, k)$ by (\ref{deltadef}), we now define a function $\delta(k)$ by the equation obtained from (\ref{deltadef}) by replacing $k_0$ with  zero (recall that $k_0 \to 0$ in the self-similar sector). 
Thus, let
$$\Delta(k) = \begin{pmatrix} \delta(k)^{-1} & 0 \\  0 & \delta(k)   \end{pmatrix},$$
where
\begin{align}\label{deltadefIV}
 \delta(k) = e^{-\frac{1}{2\pi i} \int_\R \ln(1- |r(s)|^2) \frac{ds}{s - k}}, \qquad  k \in \C \setminus \R.
\end{align}
The function $\delta$ satisfies the following jump condition across the real axis:
\begin{align}\label{deltajumpIV}
 \delta_+(k) = \frac{\delta_-(k)}{1 - |r(k)|^2}, \qquad k \in \R.
\end{align}
The symmetry $r(k) = \overline{r(-\bar{k})}$ implies that $\Delta$ obeys the symmetries (\ref{Deltasymm}). 

\begin{lemma}\label{DeltalemmaIV}
The $2 \times 2$-matrix valued function $\Delta(k)$ satisfies
$$\Delta, \Delta^{-1} \in I + \dot{E}^2(\C \setminus \R) \cap E^\infty(\C \setminus \R).$$
\end{lemma}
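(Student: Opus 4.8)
The plan is to follow the same strategy as in the proof of Lemma \ref{Deltalemma}, but simplified because the symbol $\delta(k)$ defined by (\ref{deltadefIV}) involves an integral over all of $\R$ rather than over $(-\infty, -k_0) \cup (k_0, \infty)$, so there is no branching contribution from the endpoints $\pm k_0$ and no power-type factor $\bigl(\frac{k-k_0}{k+k_0}\bigr)^{i\nu}$ to deal with. First I would introduce the function $\chi(k) = -\frac{1}{2\pi i} \int_\R \ln(1 - |r(s)|^2) \frac{ds}{s-k}$, so that $\delta(k) = e^{\chi(k)}$ directly, without any additional factor. The hypotheses on $r$ from Theorem \ref{mainth1} — namely $r \in C^{11}(\R)$ with $r^{(j)}(k) = O(k^{-4+2j})$ as $|k|\to\infty$ and $\sup_{k\in\R}|r(k)| < 1$ — guarantee that $\ln(1 - |r(\cdot)|^2) \in H^1(\R)$ (it is $C^1$, bounded, and decays like $|k|^{-8}$ together with its derivative).

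Second, I would invoke the same estimate used for (\ref{chibound}): by Lemma 23.3 in \cite{BDT1988}, since $\ln(1-|r|^2) \in H^1(\R)$, the Cauchy-type integral $\chi$ lies in $E^\infty(\C\setminus\R)$ with $\sup_{z\in\C\setminus\R}|\chi(z)| \leq \|\ln(1-|r|^2)\|_{H^1(\R)} < \infty$. This immediately yields $\delta, \delta^{-1} = e^{\pm\chi} \in E^\infty(\C\setminus\R)$, hence also $\Delta, \Delta^{-1} \in I + E^\infty(\C\setminus\R)$ (the off-diagonal entries are zero and the diagonal entries are bounded and bounded away from zero). Third, for the $\dot E^2$ membership: since $\ln(1-|r|^2) \in L^2(\R)$, the Cauchy integral $\chi$ belongs to $\dot E^2(\C\setminus\R)$, so picking the curves $C_n$ as the images of the circles $|w| = 1 - \frac1n$ under the Möbius map $z \mapsto -i\frac{z+i}{z-i}$ (and their conjugates for the lower half-plane), one has $\sup_n \|\chi\|_{L^2(C_n)} < \infty$. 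Then using the elementary inequality (\ref{ewminus1estimate}), $|e^{\chi} - 1| \leq |\chi|\max(1, e^{\re\chi}) \leq |\chi| e^{|\re\chi|}$, together with the uniform $L^\infty$ bound on $\chi$, gives $\sup_n \|\delta - 1\|_{L^2(C_n)} \leq C \sup_n \|\chi\|_{L^2(C_n)} < \infty$, so $\delta \in 1 + \dot E^2(\C\setminus\R)$; the same argument with $-\chi$ in place of $\chi$ handles $\delta^{-1}$. Assembling the diagonal matrix $\Delta$ from $\delta^{\pm 1}$ then gives $\Delta, \Delta^{-1} \in I + \dot E^2(\C\setminus\R) \cap E^\infty(\C\setminus\R)$.

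I do not anticipate any genuine obstacle here — the statement is strictly easier than Lemma \ref{Deltalemma} since the troublesome branch-point factor is absent. The only point requiring a little care is verifying that $\ln(1-|r|^2) \in H^1(\R)$ under the stated decay on $r$, but this is routine given $\sup|r| < 1$ (so $\ln(1-|r|^2)$ is a smooth function of $|r|^2$ with bounded derivatives on the relevant range) and the decay $r^{(j)}(k) = O(k^{-4+2j})$, which in particular forces $r, r' \in L^2(\R)$ and hence $\ln(1-|r|^2), \frac{d}{dk}\ln(1-|r|^2) \in L^2(\R)$.
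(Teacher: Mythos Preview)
Your proposal is correct and follows exactly the approach the paper intends: its own proof of Lemma \ref{DeltalemmaIV} consists solely of the remark that ``the proof is similar to, but easier than, the proof of Lemma \ref{Deltalemma},'' and you have spelled out precisely that simplification (no power factor $\bigl(\frac{k-k_0}{k+k_0}\bigr)^{i\nu}$, so $\delta = e^{\chi}$ directly with $\chi$ the Cauchy integral of $\ln(1-|r|^2)\in H^1(\R)$). Your verification that $\ln(1-|r|^2)\in H^1(\R)$ under the stated hypotheses on $r$ is the only additional detail, and it is routine as you indicate.
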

\proofbegin The proof is similar to, but easier than, the proof of Lemma \ref{Deltalemma}. \proofendcontinue

By Lemma \ref{DeltalemmaIV}, $M^h(x,t,k)$ satisfies the $L^2$-RH problem (\ref{RHMcheck})
iff the function $\tilde{M}$ defined by
\begin{align}\label{MtildedefIV}
\tilde{M}(x,t,k) = e^{-\frac{i\pi}{4}\sigma_3}M^h(x,t,k)\Delta(k)e^{\frac{i\pi}{4}\sigma_3}
\end{align}
satisfies the $L^2$-RH problem
\begin{align}\label{RHMtildeIV}
\begin{cases}
\tilde{M}(x, t, \cdot)  \in I + \dot{E}^2(\C \setminus \Sigma),\\
\tilde{M}_+(x,t,k) = \tilde{M}_-(x, t, k) \tilde{J}(x, t, k)  \quad \text{for a.e.} \ k \in \Sigma,
\end{cases}
\end{align}
where
\begin{align} \nonumber
\tilde{J}&(x,t,k) = e^{-\frac{i\pi}{4}\sigma_3} \Delta_-^{-1}(k) J^h(x,t,k) \Delta_+(k) e^{\frac{i\pi}{4}\sigma_3}
	\\ \label{tildeJdefIV}
& =\begin{cases}
 \begin{pmatrix} 1 & 0 \\ i\delta(k)^{-2} h_{r}(t, k) e^{t\Phi(\zeta,k)} & 1 \end{pmatrix}, \qquad k \in \partial D_1,
 	\\
\begin{pmatrix} 1 - |r(k)|^2	& i\delta_-(k)^2 \frac{\overline{r(\bar{k})}}{1 - |r(k)|^2} e^{-t\Phi(\zeta,k)}	\\
 i\delta_+(k)^{-2} \frac{r(k)}{1 - |r(k)|^2} e^{t\Phi(\zeta,k)}	& 1 \end{pmatrix},  \qquad k \in \R,
	\\
\begin{pmatrix} 1 & -i\delta(k)^2 \overline{h_{r}(t,\bar{k})} e^{-t\Phi(\zeta,k)} \\ 0 & 1 \end{pmatrix}, \qquad k \in \partial D_4,
  \end{cases}
\end{align}
and we recall that $\Sigma = \partial D_1 \cup \partial D_4 \cup \R$ is the contour shown in Figure \ref{Djsreversed.pdf}.
The upshot of the above conjugation is that we can now factorize the jump across $\R$ as follows:
\begin{align}\label{tildeJonRIV}
  \tilde{J} = 
 B_u^{-1} B_l,   \qquad k \in \R, 
\end{align}
where
\begin{align}\nonumber
 B_l =  \begin{pmatrix} 1 & 0	\\
  i\delta_+(k)^{-2} r_1(k) e^{t\Phi(\zeta,k)}	& 1 \end{pmatrix}, 
	\qquad
B_u =  \begin{pmatrix} 1	& -i\delta_-(k)^2 r_4(k) e^{-t\Phi(\zeta,k)}	\\
0	& 1 \end{pmatrix},
\end{align}
and the functions $r_1(k)$ and $r_4(k)$ are defined by
\begin{align*}
 r_1(k) = \frac{r(k)}{1 - r(k)\overline{r(\bar{k})}}, \qquad
r_4(k) = \frac{\overline{r(\bar{k})}}{1 - r(k)\overline{r(\bar{k})}}.
\end{align*}
The factors $e^{\pm \frac{i\pi}{4}\sigma_3}$ in (\ref{MtildedefIV}) are inserted for later convenience (more precisely, to ensure that (\ref{RinequalitiesIV}) holds with $s$ pure imaginary).

\medskip
{\bf Step 4: Introduce analytic approximations of $r_1(k)$ and $r_4(k)$.}
Lemma \ref{decompositionlemma2} applies also for $(x,t) \in \mathbb{S}$. Thus we find decompositions 
$$r_j(k) = r_{j,a}(x, t, k) + r_{j,r}(x, t, k), \qquad j = 1,4, \quad k \in \R,$$
where $r_{j, a}$ and $r_{j,r}$ have the properties listed in Lemma \ref{decompositionlemma2}.

\begin{figure}
\begin{center}
\medskip
 \begin{overpic}[width=.50\textwidth]{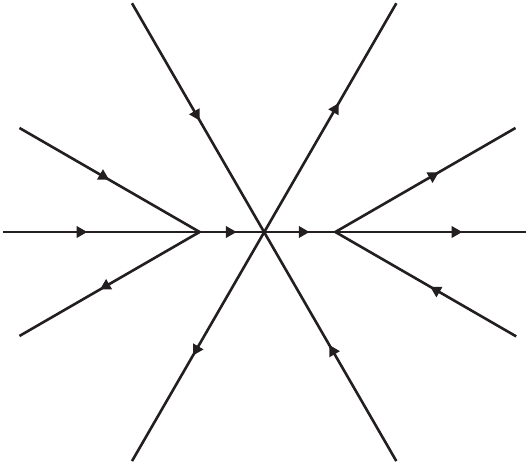}
      \put(62,39){$k_0$}
      \put(35,39){$-k_0$}
      \put(78.5,57.2){$Y_1$}
      \put(19,57){$Y_1$}
      \put(19,28){$Y_2$}
      \put(78.5,28){$Y_2$}
      \put(90,50){$V_1$}
      \put(90,35){$V_2$}
      \put(8,50){$V_1$}
      \put(8,35){$V_2$}
       \end{overpic}
    \qquad \qquad
     \begin{figuretext}\label{GammaIV.pdf}
       The  contour  $\Gamma = \Sigma \cup Y_1 \cup Y_2$ and the open sets $V_1$ and $V_2$.
     \end{figuretext}
     \end{center}
\end{figure}

\medskip
{\bf Step 5: Deform again.}
Let $\Gamma = \Sigma \cup Y_1 \cup Y_2$ and, for $j = 1,2$, let $V_j$ denote the open region between $\R$ and $Y_j$, see Figure \ref{GammaIV.pdf}. Write
$$B_l = B_{l,r}B_{l,a}, \qquad B_u = B_{u,r}B_{u,a},$$
where $\{B_{l,a}, B_{u,a}\}$ and $\{B_{l,r}, B_{u,r}\}$ denote the matrices $\{B_{l}, B_{u}\}$ with $\{r_1(k), r_4(k)\}$ replaced with $\{r_{1,a}(k), r_{4,a}(k)\}$ and $\{r_{1,r}(k), r_{4,r}(k)\}$, respectively.
The estimate (\ref{rjaestimateb}) implies that 
\begin{align*}
\begin{cases}
B_{l,a}(x,t,\cdot), B_{l,a}^{-1}(x,t,\cdot) \in I + (\dot{E}^2 \cap E^\infty)(V_1),
	\\
B_{u,a}(x,t,\cdot), B_{u,a}^{-1}(x,t,\cdot) \in I + (\dot{E}^2 \cap E^\infty)(V_2),
\end{cases}
\end{align*}
for each $(x,t) \in \mathbb{S}$.
Hence $\tilde{M}$ satisfies the $L^2$-RH problem (\ref{RHMtildeIV}) iff the function $m(x,t,k)$ defined by
\begin{align*}
m(x,t,k) = \begin{cases}  
\tilde{M}(x,t,k) B_{l,a}(x,t,k)^{-1}, & k \in V_1, \\
\tilde{M}(x,t,k) B_{u,a}(x,t,k)^{-1}, & k \in V_2, \\
\tilde{M}(x,t,k), & \text{elsewhere},
\end{cases}
\end{align*}
satisfies the $L^2$-RH problem
\begin{align}\label{RHm2IV}
\begin{cases} m(x, t, \cdot) \in I + \dot{E}^2(\C \setminus \Gamma), \\
m_+(x, t, k) = m_-(x, t, k) v(x, t, k) \quad \text{for a.e.} \ k \in \Gamma, 
\end{cases} 
\end{align}
where, in view of (\ref{tildeJdef}) and (\ref{tildeJonR}), the jump matrix $v$ is given by
\begin{align}\label{vdefIV}
v = \begin{cases}
B_{l,a} = \begin{pmatrix} 1 & 0	\\
 i \delta(k)^{-2} r_{1,a}(x,t,k) e^{t\Phi(\zeta,k)}	& 1 \end{pmatrix}, & k \in Y_1, 
 	\\
B_{u,a} = \begin{pmatrix} 1	& - i \delta(k)^2 r_{4,a}(x,t,k) e^{-t\Phi(\zeta,k)}	\\
0	& 1 \end{pmatrix}, & k \in Y_2.
	\\
B_{u}^{-1} B_{l}, & k \in Y_3, 
	\\
B_{u,r}^{-1} B_{l,r}, & k \in \R \setminus Y_3, 
 	\\
\begin{pmatrix} 1 & 0 \\ i \delta(k)^{-2}  h_{r}(t, k) e^{t\Phi(\zeta, k)} & 1 \end{pmatrix}, & k \in \partial D_1,
 	\\
\begin{pmatrix} 1 & -i \delta(k)^2 \overline{h_r(t, \bar{k})} e^{-t\Phi(\zeta, k)} \\0 & 1 \end{pmatrix}, & k \in \partial D_4.
\end{cases}
\end{align}

\medskip
{\bf Step 6: Apply Theorem \ref{steepestdescentthIV}.}
We verify that Theorem \ref{steepestdescentthIV} can be applied to the contour $\Gamma$ and the jump matrix $v$ with $s = i \delta_+(0)^{-2} r_{1}(0)$ and
\begin{align*}
& R_1(x,t,k) =  i\delta(k)^{-2} r_{1,a}(x,t,k), &&
R_2(x,t,k) = -i\delta(k)^2 r_{4,a}(x,t,k), 
	\\
& R_3(x,t,k) = i \delta_+(k)^{-2} r_1(k), &&
R_4(x,t,k) = -i\delta_-(k)^2 r_4(k).
\end{align*}

\begin{figure}
\begin{center}
\medskip
 \begin{overpic}[width=.45\textwidth]{GammaIV.pdf}
     \put(48,66.5){$\Gamma$}
           \put(62,40){\tiny $k_0$}
      \put(34.5,40){\tiny $-k_0$}
   \end{overpic}
       \qquad 
 \begin{overpic}[width=.452\textwidth]{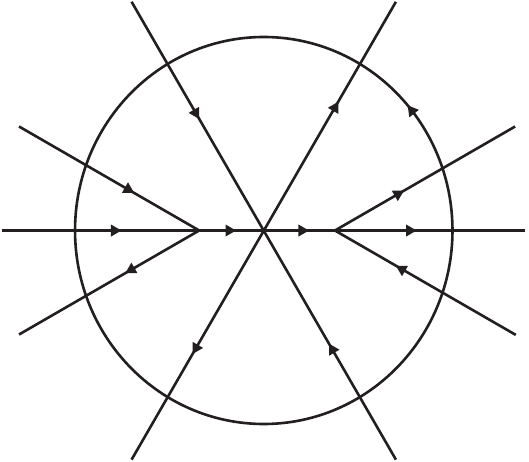}
      \put(48,67){$\hat{\Gamma}$}
          \put(8,40){\tiny $-1$}
          \put(86.5,40){\tiny $1$}
          \put(62,40){\tiny $k_0$}
      \put(34.5,40){\tiny $-k_0$}
     \end{overpic}
     \begin{figuretext}\label{GammaIVhat.pdf}
       The contours $\Gamma$ and $\hat{\Gamma}$ satisfy the properties {\upshape ($\Gamma$1)-($\Gamma$4)} of Theorem \ref{steepestdescentthIV}.
     \end{figuretext}
     \end{center}
\end{figure}

The contours $\Gamma$ and $\hat{\Gamma}$ are shown in Figure \ref{GammaIVhat.pdf}.
The conditions ($\Gamma$1)-($\Gamma$4) are clearly satisfied. 
Since the contour $k_0^{-1}\Gamma$ is independent of $(x,t)$, a scaling argument shows that $\|\mathcal{S}_{\Gamma}\|_{\mathcal{B}(L^2(\hat{\Gamma}))}$ is independent of $(x,t) \in \mathbb{S}$. Moreover, the Cauchy singular operator $\mathcal{S}_{\{|k| = 1\}}$ associated with the unit circle is bounded. Hence a decomposition argument shows that $\mathcal{S}_{\hat{\Gamma}}$ is uniformly bounded on $L^2(\hat{\Gamma})$ (cf. the proof of Theorem 4.23 in \cite{BK1997}).
Equation (\ref{winL1L2LinfIV}) follows from (\ref{vdefIV}) and the estimates in Lemmas \ref{decompositionlemma} and \ref{decompositionlemma2}.
Clearly $\det v = 1$. The symmetry condition (\ref{vsymmIV}) follows from (\ref{vdefIV}) and the  symmetries of $h_r$, $r_{j,a}$, $r_{j,r}$, and $\delta$.

We next verify (\ref{wL12inftyIV}).
By part $(d)$ of Lemma \ref{decompositionlemma2}, the $L^1$ and $L^\infty$ norms of $w$ are $O(t^{-\frac{3}{2}})$ on $\R \setminus Y_3$ uniformly with respect to $(x,t) \in \mathbb{S}$.
By part $(c)$ of Lemma \ref{decompositionlemma}, the $L^1$ and $L^\infty$ norms of $w$ are $O(t^{-\frac{3}{2}})$ on $\partial D_1 \cup \partial D_4$ uniformly with respect to $(x,t) \in \mathbb{S}$.
This verifies (\ref{wL12inftyIV}).

It remains to verify (\ref{RinequalitiesIV}). 
Since $r(k) = \overline{r(-\bar{k})}$, we have $r(0) \in \R$. Moreover, the symmetries $\delta(k) = \overline{\delta(-\bar{k})} =  \delta(-k)^{-1}$ imply $\delta_+(0) = \overline{\delta_+(0)} = \delta_-(0)^{-1}$. In particular, $s \in i \R$. It also follows that
$$R_j(x,t,0) = \begin{cases} i \delta_+(0)^{-2} \frac{r(0)}{1 - |r(0)|^2} = s, & j = 3, \vspace{.12cm}\\ 
-i\delta_-(0)^2 \frac{\overline{r(0)}}{1 - |r(0)|^2} = -s, \quad & j = 4.
\end{cases}$$
To see that $|s| < 1$, we write $\delta(k) = e^{\chi(k)}$, where $\chi(k) = - \frac{1}{2\pi i}\int_\R \ln(1 - |r(s)|^2) \frac{ds}{s - k}$.
The Sokhotski-Plemelj theorem implies
\begin{align}\label{chipluszero}
\chi_+(0) = - \frac{1}{2\pi i}\dashint_\R \ln\big(1 - |r(s)|^2\big) \frac{ds}{s} - \frac{1}{2}\ln(1 - |r(0)|^2).
\end{align}
Hence
$$|\delta_+(0)|^{-2} = e^{-2\re \chi_+(0)} = 1 - |r(0)|^2,$$
which implies $|s| = |r(0)|$. The assumption $\sup_{k \in \R} |r(k)| < 1$ now gives $|s| < 1$.  

Finally, we prove (\ref{RinequalitiesIV}) for $R_1$ and $R_3$; the proofs for $R_2$ and $R_4$ are similar. 
Using the expression (\ref{chipluszero}) for $\chi_+(0)$, we obtain, for $\im k > 0$,
\begin{align*}
\chi(k) - \chi_+(0) = & - \frac{1}{2\pi i} \dashint_\R \ln\big(1 - |r(s)|^2\big) \bigg(\frac{1}{s - k} - \frac{1}{s}\bigg) ds + \frac{1}{2}\ln(1 - |r(0)|^2)
	\\
= & - \frac{k}{2\pi i} \dashint_\R \frac{1}{s}\ln\bigg(\frac{1 - |r(s)|^2}{1 - |r(0)|^2}\bigg) \frac{ds}{s-k} + \bigg[\frac{1}{2} - \frac{k}{2\pi i} \dashint_\R \frac{ds}{s(s-k)}\bigg]\ln(1 - |r(0)|^2).
\end{align*}
The expression within square brackets vanishes. Hence
$$\chi(k) - \chi_+(0) = -\frac{k}{2\pi i} \int_\R \frac{1}{s}\ln\bigg(\frac{1 - |r(s)|^2}{1 - |r(0)|^2}\bigg) \frac{ds}{s-k}, \qquad \im k > 0.$$
The function $\frac{1}{s}\ln(\frac{1 - |r(s)|^2}{1 - |r(0)|^2})$ belongs to $H^1(\R)$; thus (cf. (\ref{chibound}))
$$|\chi(k) - \chi_+(0)| \leq C |k|, \qquad \im k > 0.$$
The inequality (\ref{ewminus1estimate}) now yields
\begin{align}\label{chichiIV}
|1 - e^{2(\chi(k) - \chi_+(0))}| \leq 2|\chi(k) - \chi_+(0)| e^{2|\chi(k) - \chi_+(0)|}
\leq C |\chi(k) - \chi_+(0)| \leq C |k|
\end{align}
whenever $\im k > 0$. On the other hand, by (\ref{rjaestimatea}), (\ref{rjaestimateb}), and Lemma \ref{DeltalemmaIV},
\begin{align}\nonumber
|R_1(x,t,k) - s|  \leq &\; 
|\delta(k)|^{-2} |r_{1,a}(x,t,k) - r_1(k_0)|
+ |\delta(k)|^{-2} |r_1(k_0) - r_1(0)|
	\\\nonumber
& + |\delta(k)^{-2} - \delta_+(0)^{-2}| |r_1(0)|
	\\ \label{R1sIV}
 \leq &\; C |k - k_0| e^{\frac{t}{4}|\re \Phi(\zeta,k)|} + C|k_0|
+ C |1 - e^{2(\chi(k) - \chi_+(0))}|, \qquad k \in Y_1^+,
\end{align}
where we recall that $Y_1^+$ and $Y_1^-$ denote the parts of $Y_1$ that lie in the right and left half-planes, respectively. Equations (\ref{chichiIV}) and (\ref{R1sIV}) prove (\ref{RinequalitiesIV}) for $R_1(x,t,k)$ with $k \in Y_1^+$; the proof when $k \in Y_1^-$ is analogous. 
The estimate (\ref{chichiIV}) also implies
\begin{align*}
|R_3(x,t,k) - s| & \leq |\delta_+(k)|^{-2} |r_1(k) - r_1(0)| + |\delta_+(k)^{-2} - \delta_+(0)^{-2}| |r_1(0)| 
	\\
& \leq  C|k| + C |1 - e^{2(\chi_+(k) - \chi_+(0))}| 
\leq  C|k|, \qquad k \in Y_3,
\end{align*}
which proves (\ref{RinequalitiesIV}) also for $R_3$.

\medskip
{\bf Step 7: Find asymptotics.}
As a consequence of Theorem \ref{steepestdescentthIV}, the $L^2$-RH problem (\ref{RHm2IV}) for $m$ has a unique solution and the limit in (\ref{limlm12IV}) exists for all $(x,t) \in \mathbb{S}$ with $t$ sufficiently large. Since the RH problems for $M$ and $m$ are equivalent, the $L^2$-RH problem (\ref{RHM}) for $M$ also has a unique solution for all sufficiently large $t$. Moreover, the limit (\ref{ulim}) defining $u(x,t)$ exists whenever $t$ is large enough and
\begin{align*}
u(x,t) & = -2i\ntlim_{k\to\infty} (kM(x,t,k))_{12} = 2 \ntlim_{k\to\infty} (km(x,t,k))_{12}.
\end{align*}

Equation (\ref{limlm12IV}) of Theorem \ref{steepestdescentthIV} then yields
\begin{align*}
u(x,t) & = \frac{u^P\big(\frac{-x}{(3t)^{1/3}}; s, 0,-s\big)}{(3t)^{\frac{1}{3}}}  +  O\big(t^{-\frac{2}{3}}\big), \qquad t \to \infty, \quad (x,t) \in \mathbb{S},
\end{align*}
where the error term is uniform with respect to $x$.
This proves (\ref{ufinalIV}) and completes the proof of Theorem \ref{mainth2}.
\proofend

\section{Concluding remarks}\nequation\label{concludesec}
Good asymptotic formulas provide one of the most efficient ways to obtain both quantitative and qualitative information on the solution of a PDE. Since the solution of an integrable equation can be expressed in terms of the solution of a matrix Riemann-Hilbert problem, the nonlinear steepest descent method introduced by Deift and Zhou \cite{DZ1993} provides an excellent tool for establishing such formulas. For initial value problems, this method has been utilized extensively since its introduction in 1993. 

In this paper, we have considered the derivation of asymptotic formulas for initial-{\it{}boundary} value problems via the nonlinear steepest descent method. In particular, we have established asymptotic formulas valid in the similarity and self-similar sectors for the mKdV equation in the quarter plane. A special emphasis has been placed on giving complete proofs and on the derivation of rigorous and uniform error estimates. 

Previous studies addressing the question of asymptotics for initial-boundary value problems for integrable equations include \cite{BFS2004, BS2009, FI1994, FI1996, FIS2005}. In particular, it was shown in \cite{BFS2004} that the quarter plane solution $u(x,t)$ of the mKdV equation (\ref{mkdv}) satisfies $u(x,t) = O(t^{-1/2})$ as $(x,t) \to \infty$ in a subsector of the similarity sector. Our results give the explicit form of the leading order term in this formula. We have also demonstrated that the boundary values have a large effect on the asymptotic behavior of the solution in the self-similar region. In particular, if the boundary values have sufficient decay as $t \to \infty$, then we have showed that the solution is uniformly of $O(t^{-2/3})$ throughout the self-similar sector. In contrast, in the absence of a boundary, the solution has a leading order term of order $O(t^{-1/3})$ in this sector. 

\appendix
\section{The RH problem associated with Painlev\'e II} \label{painleveapp}
\renewcommand{\theequation}{A.\arabic{equation}}\nequation
We give a number of results related to the Painlev\'e II equation (\ref{painleve2}). We refer to \cite{FIKN2006} for full proofs. Equation (\ref{painleve2}) is the compatibility condition 
$$A_y - U_z + [A, U] = 0$$
of the Lax pair
\begin{align}\label{painlevelax}
\begin{cases}
  \Psi_z = A \Psi, \\
  \Psi_y = U \Psi,
  \end{cases}
\end{align}
where
\begin{align*}
 & A(y, z) = -i(4z^2 + y + 2u^2)\sigma_3 - 4z u \sigma_2 - 2u_y \sigma_1,
  	\\
 & U(y, z) = -iz \sigma_3 - u\sigma_2, 
\end{align*}  
and $\{\sigma_j\}_1^3$ denote the standard Pauli matrices. The linear system (\ref{painlevelax}) has an irregular singular point at $z = \infty$.
The theory of ODEs with singular points implies that there exist solutions $\{\Psi_n\}_1^6$ of (\ref{painlevelax}) such that 
\begin{align}\label{Psinasymptotics}
\Psi_n(y, z) = \bigl(I + O(z^{-1})\bigr) e^{-i(yz + \frac{4}{3}z^3)\sigma_3}, \qquad z \to \infty, \quad z \in \Omega_n,
\end{align}
where the sectors $\Omega_n$ are given by
$$\Omega_n = \biggl\{ z \in \C \,\bigg|\, \arg z \in \biggl(\frac{\pi}{3}(n-2), \frac{\pi}{3}n\biggr)\biggr\}, \qquad n = 1, \dots 6.$$
The asymptotics in (\ref{Psinasymptotics}) is uniform as $z \to \infty$ within any closed subwedge of $\Omega_n$.
The $\Psi_n$'s are entire functions of $z$ which are related by
$$\Psi_{n+1} = \Psi_n S_n, \qquad n = 1, \dots, 6, \quad \Psi_7:= \Psi_1,$$
where the constant Stokes matrices $\{S_n\}_1^6$ have the triangular form
$$S_n = \begin{pmatrix} 1 & 0 \\ s_n & 1 \end{pmatrix}, \quad n \text{ odd};
\qquad
S_n = \begin{pmatrix} 1 & s_n \\ 0 & 1 \end{pmatrix}, \quad n \text{ even}.$$
The symmetries
$$A(y, z ) = -\sigma_2 A(y, -z) \sigma_2, \qquad
U(y, z ) = \sigma_2 U(y, -z) \sigma_2$$
imply that
$$S_{n+3} = \sigma_2 S_n \sigma_2 \quad \text{i.e.} \quad
s_{n+3} = - s_n, \qquad n = 1,2,3.$$

\begin{figure}
\begin{center}
\bigskip
 \begin{overpic}[width=.45\textwidth]{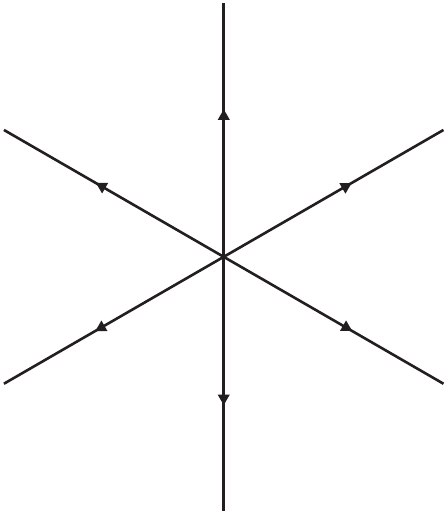}
 \put(67,57){$\Gamma_1$}
 \put(46,76){$\Gamma_2$}
 \put(19,67){$\Gamma_3$}
 \put(15,40){$\Gamma_4$}
 \put(36,20){$\Gamma_5$}
 \put(64,29){$\Gamma_6$}
 \end{overpic}
   \begin{figuretext}\label{Sixrays.pdf}
      The contour $\Gamma = \cup_{j=1}^6 \Gamma_j$.
      \end{figuretext}
   \end{center}
\end{figure}
Let $\Gamma = \cup_{j=1}^6 \Gamma_j$ where the rays $\{\Gamma_j\}_1^6$ are defined by (see Figure \ref{Sixrays.pdf})
$$\Gamma_j = \biggl\{z \in \C \, \bigg|\, \arg z = \frac{\pi}{6} + \frac{\pi (n-1)}{3}\biggr\}, \qquad n = 1, \dots, 6.$$
Assume that these rays are oriented away from the origin.
Then the sectionally analytic function $m^P(y,z)$ defined by
$$m^P(y,z) = \Psi_n(y,z) e^{i(yz + \frac{4}{3}z^3)\sigma_3}, \qquad \arg z \in \biggl(\frac{\pi(n-2)}{6}, \frac{\pi n}{6}\biggr),$$
satisfies the RH problem
\begin{align}\label{RHmP}
\begin{cases} m_+^P(y, z) =  m_-^P(y, z) e^{-i(yz + \frac{4}{3}z^3)\hat{\sigma}_3} S_n, & z \in \Gamma_n, \quad n = 1, \dots, 6, \\
m^P(y, z) = I + O(z^{-1}), & z \to \infty.
\end{cases} 
\end{align}
The cyclicity condition $S_1S_2S_3S_4S_5S_6 = I$ reduces to the condition
\begin{align}\label{s123condition}
s_1 -s_2 +s_3 + s_1s_2s_3 = 0.
\end{align}

It can be shown (see Theorem 3.4 and Theorem 4.2 in \cite{FIKN2006}) that given a set of complex constants $\mathcal{S} = \{s_1, s_2, s_3\}$ satisfying (\ref{s123condition}), there exists a countable set $Y_{\mathcal{S}} = \{y_j = y_j(\mathcal{S})\}_{j=1}^\infty \subset \C$ with $y_j \to \infty$ as $j \to \infty$, such that the RH problem (\ref{RHmP}) has a unique classical solution\footnote{A classical solution of (\ref{RHmP}) is a $2\times 2$-matrix valued function $m^P(y, \cdot)$  such that $(i)$ for each component $D$ of $\C \setminus \Gamma$, the restriction of $m^P$ to $D$ is analytic in $D$ and has a continuous extension to $\bar{D}$, $(ii)$ $m^P$ satisfies the jump condition in (\ref{RHmP}), and $(iii)$ $m^P(y, z) = I + O(z^{-1})$ uniformly as $z \to \infty$, $z \in \C$.} $m^P(y, z)$ for each $y \in \C \setminus Y_{\mathcal{S}}$. 
The functions $m_n^P(y, z)$, where $m_n^P$ denotes the restriction of $m^P$ to $\arg z \in (\frac{\pi(2n-3)}{6}, \frac{\pi (2n-1)}{6})$, admit analytic continuations to $(\C \setminus Y_{\mathcal{S}}) \times \C$.
As $z \to \infty$, $m^P$ satisfies 
\begin{align}\label{mPasymptotics}
m^P(y, z) = I + \frac{1}{2z} \begin{pmatrix} 0 & u^P(y; s_1, s_2, s_3) \\ u^P(y; s_1, s_2, s_3) & 0 \end{pmatrix} + O(z^{-2}), \qquad z \to \infty, \quad y \notin Y_{\mathcal{S}},
\end{align}
where $u^P(\cdot; s_1, s_2, s_3)$ satisfies the Painlev\'e II equation (\ref{painleve2}). 
The asymptotics in (\ref{mPasymptotics}) is uniform in the following sense: For any compact subset $K \subset \C \setminus Y_{\mathcal{S}}$, there exists a $C > 0$ such that
$$\bigg\|m^P(y,z) - I - \frac{1}{2z} \begin{pmatrix} 0 & u^P(y; s_1, s_2, s_3) \\ u^P(y; s_1, s_2, s_3) & 0 \end{pmatrix} \bigg\| < C |z|^{-2}, \qquad |z| > 1, \quad y \in K.$$
Moreover, the map $\mathcal{S} \mapsto u^P(\cdot; s_1, s_2, s_3)$ is a bijection
\begin{align}\label{painlevebijection}
\{(s_1,s_2,s_3) \in \C^3 \,|\, s_1 -s_2 +s_3 + s_1s_2s_3 = 0\} \to \{\text{solutions of }(\ref{painleve2})\}
\end{align}
and the set $Y_{\mathcal{S}}$ is the set of poles of $u^P(\cdot; s_1, s_2, s_3)$, see Corollary 4.4 in \cite{FIKN2006}.

We will be interested in solutions $u$ of the Painlev\'e II equation which are real-valued on $\R$. To this end, we note that (p. 158 in \cite{FIKN2006})
$$u^P(y; s_1, s_2, s_3) = \overline{u^P(\bar{y}; s_1, s_2, s_3)}$$ 
if and only if the $s_n$'s satisfy
$$s_3 = \bar{s}_1, \qquad s_2 = \bar{s}_2.$$

\bigskip
\noindent
{\bf Acknowledgement} {\it The author thanks the two referees for excellent suggestions. Support is acknowledged from the Swedish Research Council Grant No. 2015-05430, the G\"oran Gustafsson Foundation, Sweden, and the EPSRC, UK.}

\bibliographystyle{plain}
\bibliography{is}

\end{document}